\DeclareMathAlphabet\mathbfcal{OMS}{cmsy}{b}{n}
\theoremstyle{plain}
\newtheorem{Thm}[subsection]{Theorem}
\newtheorem{Cor}[subsection]{Corollary}
\newtheorem{Prop}[subsection]{Proposition}
\newtheorem{Lem}[subsection]{Lemma}
\theoremstyle{definition}
\newtheorem{Def}[subsection]{Definition}
\renewcommand{\phi}{\varphi}
\newcommand{\RR}{\mathbb{R}}
\newcommand{\CC}{\mathbb{C}}
\newcommand{\QQ}{\mathbb{Q}}
\renewcommand{\emptyset}{\varnothing}
\renewcommand{\setminus}{-}
\newcommand{\im}{\operatorname{im}}
\begin{document}

\title[Automatic continuity]{Automatic continuity of abstract homomorphisms between
                  locally compact and Polish groups}
                  
%

\author{Oskar Braun, Karl H. Hofmann and Linus Kramer}
\address{Oskar Braun, Linus Kramer\newline\indent
Mathematisches Institut, Universit\"at M\"unster
\newline\indent
Einsteinstr. 62, 
48149 M\"unster,
Germany}
\email{linus.kramer{@}uni-muenster.de}
\thanks{Partially supported by SFB 878}
\address{Karl H. Hofmann\newline\indent
Fachbereich Mathematik, Technische Universit\"at Darmstadt \newline\indent
Schlossgartenstra{\ss}e 7,
64289 Darmstadt, Germany} 
\email{hofmann{@}mathematik.tu-darmstadt.de}

\maketitle

\begin{abstract}
We prove results about automatic continuity and openness of abstract surjective group homomorphisms
$K\xrightarrow{\ \phi\ } G$, where $G$ and $K$ belong to a certain class $\mathbfcal K$ of topological groups,
and where the kernel of $\phi$ satisfies a certain topological countability condition.
Our results apply in particular to the case where $G$ is a semisimple Lie group or a semisimple 
compact group, and where $\mathbfcal K$ is either the class of all locally compact groups or the class 
of all Polish groups.
\end{abstract}

\section*{Introduction}
We are concerned with questions of the following type. Suppose that 
$G$ and $K$ are topological groups belonging to a certain class $\mathbfcal K$ of spaces, and suppose that
\[
 K\xrightarrow{\ \phi\ } G
\]
is an abstract (i.e. not necessarily continuous)
surjective group homomorphism. Under what conditions
on the group $G$ and the kernel $\operatorname{ker}(\phi)$ is the homomorphism
$\phi$ automatically continuous and open?
Questions of this type have a long history and were studied in particular 
for the case that $G$ and $K$ are Lie groups, compact groups, or Polish groups.

We develop an axiomatic approach, which allows us to resolve the question 
uniformly for different classes of topological groups.
In this way we are able to extend the classical results about automatic 
continuity to a much more general setting.
We shall say that a class $\mathbfcal K$ 
of topological Hausdorff spaces which is closed under the passage to closed subspaces
and closed under finite products is
\emph{almost Polish} if every nonempty space $X$ in the class satisfies the
following properties:
\begin{enumerate}[(1)]
\item  Every open covering of $X$ has a countable subcovering.
\item  The space $X$ is not a countable union of nowhere dense subsets.
\item  For each continuous image $A\subseteq X$ of some $\mathbfcal K$-member 
       there is an open set $U\subseteq X$ such that the symmetric
       difference  $(A-U)\cup(U-A)$  is a countable union 
       of nowhere dense subsets of $X$.
\end{enumerate}

\noindent
Note that by (3), being almost Polish is a property of a class of spaces,
and not a property of an individual space.
The class $\mathbfcal P$ of Polish spaces, the class 
$\mathbfcal L^\sigma$ of locally compact $\sigma$-compact spaces
and the class $\mathbfcal C$ of compact spaces are almost Polish.
A more systematic discussion of 
almost Polish classes of spaces will be presented 
in Section~\ref{TopologySection} below.

Given an almost Polish class $\mathbfcal K$ and a space $X\in\mathbfcal K$,
we call a subset $A\subseteq X$ a \emph{$\mathbfcal K$-analytic set}
if $A=\psi(Z)$ holds for some $Z\in\mathbfcal K$ and
some continuous map $\psi:Z\longrightarrow X$. This notion parallels
the notion of an analytic set (or Suslin set) in the classical theory
of Polish spaces and in descriptive set theory. 
Singletons are $\mathbfcal K$-analytic
in every almost Polish class $\mathbfcal K$.

Suppose that $\mathbfcal K$ is an almost Polish class of spaces, and that
$G$ is a topological group that belongs to $\mathbfcal K$.
We now introduce a piece of terminology which we shall use
throughout this text. We shall call
 $G$ \emph{rigid within the class} $\mathbfcal K$,
or $\mathbfcal K$-\emph{rigid} for short, if the following holds.

\smallskip\noindent\textbf{Rigidity.}
{\em For every short exact sequence of groups
\[
 1\longrightarrow N \xhookrightarrow{\ \ \ } K\xrightarrow{\ \phi\ }G\longrightarrow 1,
\]
where $\phi$ is an abstract group homomorphism,
where  $G$ and $K$ belong to $\mathbfcal K$, and where
the kernel $N$ of $\phi$ is $\mathbfcal K$-analytic,
the homomorphism $\phi$ is automatically continuous and open.}

\smallskip
Notice right away that any topological group $G$ supporting a
discontinuous automorphism fails to be $\mathbfcal K$-rigid for
any almost Polish class $\mathbfcal K$ containing $G$.
The following consequence of rigidity is immediate from the definition.

\smallskip\noindent
\textbf{Uniqueness of group topologies.}
{\em A $\mathbfcal K$-rigid topological group $G$ for
an almost Polish class $\mathbfcal K$ has a unique group topology 
in that class $\mathbfcal K$.
In particular,
every abstract group automorphism of $G$ is a homeomorphism.}

\smallskip
Before we formulate our first
rigidity result   we need to recall that
a simple real Lie algebra $\mathfrak g$ is called \emph{absolutely simple} if its
complexification $\mathfrak g\otimes_{\mathbb R}\mathbb C$ is a simple complex Lie algebra.
The simple real Lie algebras which are not absolutely simple are precisely the complex
simple Lie algebras, viewed as real Lie algebras.
For example, the real Lie algebra $\mathfrak{so}_{1,3}\RR$ is not absolutely simple, because 
it is isomorphic as a real Lie algebra to~$\mathfrak{sl}_2\CC$.

\smallskip\noindent\textbf{Theorem A.}
{\em A Lie group  $G$ is rigid within every 
almost Polish class $\mathbfcal K$ containing it, provided it satisfies
the following conditions:
\begin{enumerate}[\rm(1)]
\item The center $\operatorname{Cen}(G^\circ)$ of its identity component  is finite.
\item Its Lie algebra $\operatorname{Lie}(G)$ is a direct sum 
      of absolutely simple ideals.
\end{enumerate}  
}

\smallskip
See Theorem~\ref{SemisimpleThm} for the proof. The case of semisimple Lie groups with infinite
centers, like $\widetilde{\mathrm{SL}_2\RR}$, remains open.
Theorem A generalizes results in  \cite{BT}, \cite{Car}, \cite{Freu}, \cite{Tits}, and \cite{vdW},
which mainly concern automatic continuity of abstract isomorphisms within the class of 
semisimple Lie groups, and 
\cite[5.66]{HMCompact}, \cite{Ka74}, \cite{Kal82}, and \cite{KramerUnique}
which concern rigidity with respect to subclasses of $\mathbfcal P$ and $\mathbfcal L^\sigma$.
We also prove rigidity results for certain semidirect products of vector groups and
classical Lie groups in Section~\ref{SemidirectSection}.
We refer to Theorem~\ref{SemidirectTheorem} and Theorem~\ref{ClassicalGroups}.

Following \cite[9.5]{HMCompact}, we shall call a compact topological group $G$ \emph{semisimple}
if it is connected and perfect. 

\smallskip\noindent\textbf{Theorem B.}
{\em A compact semisimple group $G$ is rigid within every almost Polish class $\mathbfcal K$ containing it.}

\smallskip
In Theorem \ref{CompactSemisimpleThm} below we in fact prove a rigidity result which holds for a much larger
class of compact groups than semisimple groups, including many profinite groups. 
For the class $\mathbfcal L^\sigma$, Theorem~B is essentially proved in \cite{Braun}, 
and in a more restricted form in~\cite{Stewart}.

In a different direction, and building on work by Nikolov--Segal, we obtain the following result,
which generalizes \cite{Pejic}. See Theorem~\ref{ProfiniteTheorem}.

\smallskip\noindent\textbf{Theorem C.}
{\em A topologically finitely generated profinite group $G$ is rigid within every almost Polish class $\mathbfcal K$ 
containing it.}

\smallskip
All these rigidity results deal with abstract homomorphisms $K\longrightarrow G$ where the \emph{range}
$G$ has prescribed properties. However, the methods that we develop are also
capable of producing automatic continuity results for abstract homomorphisms
$G\longrightarrow H$ between topological groups where the \emph{domain} $G$ is
a Lie group with special properties.

\smallskip

For this purpose,
we call a subset $C$ of a topological group 
\emph{spacious}
if some product of finitely many left translates of $CC^{-1}$ has nonempty interior.
Small spacious sets abound in many Lie groups, as we shall
show in Section~\ref{YamabeBaireSection}.
Theorem D generalizes \cite[5.64]{HMCompact} and~\cite{vdW}, see Theorem~\ref{OtherDircetion} and
its corollaries.

\smallskip\noindent\textbf{Theorem D.}
{\em Let $\psi: G\longrightarrow H$ be an abstract homomorphism from
a Lie group $G$ into a topological group $H$ satisfying the following conditions:
\begin{enumerate}[\rm(1)] 
\item The Lie algebra $\operatorname{Lie}(G)$ is  perfect.
\item  There exists a compact spacious set $C\subseteq G$ 
       whose image $\psi(C)\subseteq H$ has compact closure.
\end{enumerate}
Then $\psi$ is continuous.}

\smallskip
Related results concerning automatic continuity for SIN groups are proved in \cite{DowerkThom} and in \cite{DielsDowerk}.
We also take the opportunity to correct a mistake which occurred both in \cite{KramerUnique} 
and in~\cite{Braun}, see Section~\ref{Erratum} below.

\section*{Acknowledgment}
The authors thank Karl-Hermann Neeb for pointing out several useful references.
The third author thanks Walter Neumann for his hospitality at Columbia University,
and Eugen Hellmann for a helpful discussion of the fields $\CC_p$. 
We also thank the referees for helpful remarks and constructive comments.

\section*{The strategy}
Suppose that we are given a short exact sequence
\[
1\longrightarrow N \xhookrightarrow{\ \ \ } K\xrightarrow{\ \phi\ }G\longrightarrow 1,
\]
where $G,K$ are groups in some almost Polish class $\mathbfcal K$, where
$\phi$ is an abstract group homomorphism, and where $N\unlhd K$ is a normal (not necessarily closed)
subgroup which is a $\mathbfcal K$-analytic set. 
As a first step, we show that $\phi$ is continuous and open, provided that we can construct
a neighborhood basis of the identity in $G$ consisting of sets $V\subseteq G$ whose
preimages $\phi^{-1}(V)$ are $\mathbfcal K$-analytic. This is the main result in
Section~\ref{TopologySection}.

The actual construction of this neighborhood basis depends on the nature of the group $G$.
If $G$ is an $n$-dimensional Lie group whose Lie algebra is perfect, then we show the following.
Suppose that $D\subseteq G$ is one fixed compact identity neighborhood. Then there  exist
$1$-parameter subgroups $c_1,\ldots,c_n$ in $G$ such that the family
$M_{t_1,\ldots,t_n}=[c_1(t_1),D]\cdots [c_n(t_n),D]$, where $0<t_i\leq 1$ holds for all $i=1,\ldots,n$,
is a neighborhood basis of the identity in $G$. This observation goes back to van der Waerden.

Suppose that $G$ is a compact connected semisimple Lie group. In this case we may put
$D=G$ and then apply van der Waerden's construction. 
It turns out that then the sets $\phi^{-1}(M_{t_1,\ldots,t_n})$ are $\mathbfcal K$-analytic.
In view of the results in Section~\ref{TopologySection}, this allows us to conclude that
$\phi$ is continuous and open.

If $G$ is a connected semisimple Lie group with finite center, but not compact, 
then harder work is required.  
In this case we show that there exists a finite set $X\subseteq G$ and an element
$h\in G$ such that the set $C=\{ghg^{-1}\mid g\in\operatorname{Cen}_G(X)\}$ is compact,
and such that there exist elements $g_1,\ldots,g_r\in G$ such that the set
$D=g_1CC^{-1}g_2CC^{-1}\cdots g_{r}CC^{-1}\subseteq G$ is a compact identity neighborhood.
This result depends on the advanced structure theory of real semisimple Lie groups.
It requires that each simple ideal in the Lie algebra of $G$ 
is absolutely simple.
We also use Yamabe's Theorem saying that a path connected subgroup
of a Lie group is an analytic Lie subgroup,
and Baire's Category Theorem. In any case,
it follows again that the sets $\phi^{-1}(M_{t_1,\ldots,t_n})$ are $\mathbfcal K$-analytic
in $K$,
and by the results in Section~\ref{TopologySection}, the map $\phi$ is continuous and open.

For general compact connected semisimple groups and for topologically finitely generated profinite groups, the arguments
are somewhat different. However, they always boil down to the construction of a
neighborhood basis of the identity in $G$ consisting of sets that arise in an 
`algebraic' way starting from a finite set of group elements.

\section{Examples and counterexamples}
\label{ExampleSection}
Before we embark on the proofs of our main results, we collect a series of examples
which illustrate that our main results fail if certain assumptions are dropped.

\subsection{Rigidity fails for abelian groups.}\label{Ex1}
As abstract groups, the connected Lie groups $\mathbb C^*$ and $\operatorname{U}(1)\cong\mathbb R/\mathbb Z$ are isomorphic
\cite[A1.43]{HMCompact}.
Hence uniqueness of topologies fails for these locally compact, Polish, abelian groups. Also, 
the compact group $\mathbb R/\mathbb Z$ has non-continuous abstract automorphisms.

\subsection{Rigidity fails for 
groups which are not locally compact or $\sigma$-compact.}\label{Ex2}
By~\cite{Kiltinen}, the field 
$\mathbb{R}$ admits uncountably many nondiscrete
field topologies, which are not locally compact. 
Let $\mathcal T$ be one such topology which is
different from the usual topology $\mathcal S$ on the reals. Let $\mathcal T'$ and $\mathcal S'$ denote the 
topologies on the matrix group
$\operatorname{SO}(3)\subseteq\mathbb R^{3\times 3}$ induced by $\mathcal T$ and $\mathcal S$.
Then the identity map $(\operatorname{SO}(3),\mathcal T')\longrightarrow(\operatorname{SO}(3),\mathcal S')$
is not continuous. Such a topology $\mathcal T'$ on $\operatorname{SO}(3)$ fails to be locally compact or Polish.
This follows, for example, from our Theorem $D$. 

The discrete topology $\mathcal D$ on $\operatorname{SO}(3)$ is locally compact
and metrizable,
but neither $\sigma$-compact nor Polish. The identity map 
$(\operatorname{SO}(3),\mathcal D)\to(\operatorname{SO}(3),\mathcal S')$
is a continuous
bijective homomorphism which is not open.

\subsection{Rigidity fails for infinite products of compact 
Lie groups if the kernel is not restricted.}\label{Ex3}
(See \cite[pp.~182--183]{HeHoMo}.)
Let $I$ be an infinite set, let $G$ be a compact group and let 
\[K=\prod_{i\in I}G=\operatorname{Map}(I,G).\]
Let $I\xhookrightarrow{\ \ \ }\beta I$ denote the \v Cech--Stone compactification of the discrete space $I$.
For $x\in\beta I$ put $\mu(x)=\{J\subseteq I\mid x\in\overline J\}$. Then $\mu(x)$ is an ultrafilter
on $I$ which is free if and only if $x\in\beta I\setminus I$.
From the universal property of the \v Cech--Stone compactification we obtain a bijection
\[
 \operatorname{Map}(I,G)\xhookrightarrow{\qquad} C(\beta I,G),
\]
where $C(\beta I,G).$ denote the set of continuous maps from $\beta I$ to $G$.
For $x\in\beta I$, the evaluation homomorphism $x^*:C(\beta I,G)\longrightarrow G$, $f\longmapsto f(x)$
is surjective, because $G$ embeds diagonally in $\operatorname{Map}(I,G)\subseteq C(\beta I,G)$ as the set
of constant maps. If $x=j\in I$, then
$x^*=\operatorname{pr}_j$ is the projection onto the $j$th coordinate, which is continuous and 
open. However if $x\in\beta I\setminus I$, then the kernel of $x^*$ is dense in $K$,
as can be seen from the fact that then the ultrafilter $\mu(x)$ contains all cofinite 
sets.\footnote{\ From a different viewpoint, we express the group $G$ here 
via $x^*$ as an \emph{ultralimit}
or \emph{asymptotic cone} of a constant family $\{G\}$ of compact metric groups, without rescaling.}

Suppose that $I$ is countably infinite and that $G=\operatorname{Alt}(5)$ 
or that $G=\operatorname{SO}(3)$. Then $G$ and $K$ are compact Polish groups and
$x^*:K\longrightarrow G$ is surjective, but not continuous if $x\in\beta I\setminus I$.
This shows that we have to make some topological assumption on the kernel of the homomorphism
in order to obtain rigidity results.

In view of Theorem B and its generalization, Theorem~\ref{CompactSemisimpleThm},
we see also that $\operatorname{ker}(x^*)$ is then neither $\sigma$-compact
nor an analytic set. In the case that $G=\operatorname{Alt}(5)$ we can even conclude that 
the finite index subgroup $\operatorname{ker}(x^*)\subseteq K$
is not Haar measurable, since otherwise $\operatorname{ker}(x^*)$ would necessarily have positive
and finite volume, and hence would be open and closed.

\subsection{Rigidity fails for $\operatorname{SL}_n(\mathbb{C})$ and all
infinite complex linear algebraic groups.}\label{Ex4}
The field of complex numbers has $2^{2^{\aleph_0}}$ non-continuous automorphisms.
Each of these automorphisms extends entry-wise to a non-continuous automorphism of the matrix
group $\operatorname{SL}_n(\mathbb C)$. More generally, this method gives
non-continuous automorphisms of all infinite complex matrix groups $\mathbf{G}(\mathbb C)$,
where $\mathbf G$ is a linear algebraic group defined over $\mathbb C$. Such a group
$\mathbf G(\mathbb C)$ is in a natural way a complex Lie group.

But there are even more topologies on the field $\mathbb C$. Let $p$ be a prime and 
let $\mathbb C_p$ denote the completion of the algebraic closure of the field of $p$-adic
numbers. Then $\mathbb C_p$ is an algebraically closed, topologically separable, non-archimedean
complete valued field, and thus the matrix group
$\operatorname{SL}_n(\mathbb C_p)$ is a Polish group. Since $\mathbb C_p$ is algebraically closed,
of characteristic $0$ 
and of cardinality $2^{\aleph_0}$, there is a field isomorphism $\mathbb C\cong\mathbb C_p$.
The topology on $\CC_p$ is, however, totally disconnected and not locally compact.
Moreover, the topological field $\CC_p$ can be recovered from the topological group $\operatorname{SL}_n(\mathbb C_p)$
from the action of a maximal torus on a well-chosen root subgroup.
The topological closure of the prime field $\QQ$ of $\CC_p$ is the field $\QQ_p$.
It follows that for different primes $p\neq q$, the fields $\CC_p$ and $\CC_q$ are not isomorphic as 
topological fields, since then $\QQ_p\not\cong\QQ_q$.
This shows that $\operatorname{SL}_n(\mathbb{C})\cong\operatorname{SL}_n(\mathbb C_p)$ carries many non-homeomorphic 
Polish group topologies. These complex Lie groups typically have a 
simple Lie algebra which fails to be absolutely simple.

\smallskip
The field of real numbers has only one automorphism, so this construction does not carry
over. The next example, which is due to J.~Tits, shows that real algebraic groups may
nevertheless have non-continuous automorphisms.
\subsection{Rigidity fails for certain connected perfect 
real algebraic groups.}\label{Ex5}
Let $\mathbf G$ be a linear algebraic group defined over $\mathbb R$, e.g.~$\mathbf G=\operatorname{SL}_n$.
Then the group of $\RR$-rational points $G=\mathbf G(\mathbb R)$ is a real Lie group. 
Let $\mathbb R[\delta]=\mathbb{R}[x]/(x^2)$ denote the ring of dual numbers.
The tangent bundle group $TG$ on the one hand is isomorphic to 
the semidirect product 
$\operatorname{Lie}(G)\rtimes_{\operatorname{Ad}} G$, and on the other hand 
is isomorphic
to the group of $\mathbb R[\delta]$-points $\mathbf G(\mathbb R[\delta])$. 
It is observed in \cite{Tits} that
the ring $\mathbb R[\delta]$ has uncountably many non-continuous
ring automorphisms, which extend functorially to non-continuous automorphisms of 
the Lie group $\mathbf G(\mathbb R[\delta])$.
For $\mathbf{G}=\operatorname{SO}_n$ ($n\geq 3$) or $\mathbf G=\operatorname{SL}_n$ ($n\geq 2$), the
Lie group $\mathbf G(\mathbb R[\delta])$
is a connected perfect Lie group, see Section~\ref{SemidirectSection} below. These 
Lie groups $\mathbf{G}(\mathbb R[\delta])$ are
therefore not rigid within $\mathbfcal L^\sigma$ and $\mathbfcal P$.
A particular case of this phenomenon is the group 
\[G=\mathbb R^3\rtimes_\rho\operatorname{SO}(3),\]
where $\rho:\operatorname{SO}(3)\xhookrightarrow{\ \ \ }\operatorname{GL}_3(\mathbb R)$ is the standard representation.
In this case $\rho=\operatorname{Ad}$ is the adjoint representation, 
and the resulting group $G=\operatorname{SO}_3(\mathbb R[\delta])$ is not rigid.

\section{Background on topological groups and point-set topology}
\label{TopologySection}

Our convention is that all topological groups and spaces are Hausdorff.
All other topological conditions will be stated explicitly.
A \emph{path} in a space $X$ is a continuous map $f:[0,1]\longrightarrow X$.
If there exists a nonconstant path in $X$ then
we say that $X$ \emph{contains a nonconstant path.}

A set will be called \emph{countable} if its cardinality does not exceed $\aleph_0$.

The identity component of a topological group $G$ is denoted by $G^\circ$.
This is a closed normal subgroup, and $G/G^\circ$ is a totally disconnected
topological group
\cite[II.7.1 and II.7.3]{HewittRoss}. The topological group $G$ is called 
\emph{almost connected} if $G/G^\circ$ is compact.

Recall that a space is called \emph{$\sigma$-compact} if it can be written as
a countable union of compact sets.
Since locally compact $\sigma$-compact groups play a prominent role in this
article, it seems worthwhile to take a closer look at their structure. 
Theorem \ref{LsigmaStructure} below will not be used elsewhere, 
but it may well take the magic
out of this class of groups.

\begin{Thm}\label{LsigmaStructure}
For a locally compact group $G$ the following are equivalent.
\begin{enumerate}[\rm(1)]
 \item 
$G$ is $\sigma$-compact.

\item Every open subgroup $H\subseteq G$ has countable index.

\item $G$ has an almost connected open subgroup of countable index. 
\end{enumerate}

\end{Thm}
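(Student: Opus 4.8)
The plan is to prove the implications $(3)\Rightarrow(1)\Rightarrow(2)\Rightarrow(3)$, which gives the full cycle of equivalences.

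For $(3)\Rightarrow(1)$: suppose $H\subseteq G$ is an almost connected open subgroup of countable index. An almost connected locally compact group is $\sigma$-compact; indeed, by the Kakutani--Kodaira theorem (or the structure theory of almost connected locally compact groups via Yamabe's theorem) there is a compact normal subgroup $N\unlhd H$ with $H/N$ a Lie group, and an almost connected Lie group has finitely many components, hence is $\sigma$-compact; pulling back along the quotient by the compact $N$ keeps $\sigma$-compactness. Thus $H$ is $\sigma$-compact, and since $[G:H]$ is countable, $G$ is a countable union of cosets $gH$, each of which is $\sigma$-compact, so $G$ is $\sigma$-compact.

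For $(1)\Rightarrow(2)$: let $G=\bigcup_{n\in\NN}C_n$ with each $C_n$ compact, and let $H\subseteq G$ be an open subgroup. Then $H$ is also closed, and the cosets $\{gH\mid g\in G\}$ form a partition of $G$ into open sets. Each compact $C_n$ meets only finitely many of these cosets, so the set of cosets meeting $\bigcup_n C_n=G$ is countable; that is, $[G:H]$ is countable.

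The main work is $(2)\Rightarrow(3)$. Here I would invoke the approximation theory of locally compact groups: by the solution of Hilbert's fifth problem (van Dantzig's theorem in the totally disconnected case, combined with Yamabe/Gleason--Montgomery--Zippin in general), every locally compact group $G$ has an open subgroup $U$ with $U/U^\circ$ having arbitrarily small compact open subgroups; more precisely $G$ has an open almost connected subgroup $A$. (One takes $A$ to be the preimage in $G$ of a compact open subgroup of $G/G^\circ$; such a compact open subgroup exists by van Dantzig's theorem since $G/G^\circ$ is totally disconnected and locally compact. Then $A^\circ=G^\circ$ and $A/A^\circ$ is compact, so $A$ is almost connected and open.) By hypothesis (2), $[G:A]$ is countable, which is exactly (3). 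The one subtlety is to make sure $A$ really is open: the preimage of an open set under the quotient map $G\to G/G^\circ$ is open, and a compact open subgroup of $G/G^\circ$ is in particular open, so $A$ is open in $G$; and $G^\circ\subseteq A$ forces $A^\circ=G^\circ$. The expected obstacle is marshalling the structure-theoretic input correctly (van Dantzig's theorem for the totally disconnected quotient, and the $\sigma$-compactness of almost connected locally compact groups for the first implication); the point-set bookkeeping in the other two implications is routine.
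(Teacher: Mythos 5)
Your cycle $(1)\Rightarrow(2)\Rightarrow(3)\Rightarrow(1)$ is the same as the paper's, and your arguments for the two easy implications essentially coincide with it: $(1)\Rightarrow(2)$ is the same coset-counting, and for $(2)\Rightarrow(3)$ the paper simply cites Montgomery--Zippin (Lemma~2.3.1), which is exactly the van Dantzig argument you spell out (preimage $A$ of a compact open subgroup of $G/G^\circ$ is open, $A^\circ=G^\circ$, $A/A^\circ$ compact). The genuine difference is in the hard implication $(3)\Rightarrow(1)$: you derive $\sigma$-compactness of an almost connected locally compact group $H$ from the Gleason--Yamabe approximation theorem (a compact normal $N\unlhd H$ with $H/N$ an almost connected Lie group, hence with finitely many components and $\sigma$-compact, and $\sigma$-compactness pulls back along the proper quotient by the compact $N$), whereas the paper uses Iwasawa's Splitting Theorem (an open continuous image of $L\times K$ with $L$ a $1$-connected Lie group and $K$ compact, which is $\sigma$-compact and of finite index in $H$), or alternatively the homeomorphism $H\approx\mathbb R^n\times C$ with $C$ a maximal compact subgroup. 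All three are deep structure-theoretic inputs of comparable weight, so your route is a legitimate alternative; yours is perhaps the most commonly available citation, while the paper's second argument gives the sharpest topological picture. One caution: the Kakutani--Kodaira theorem as usually stated applies to $\sigma$-compact locally compact groups, so invoking it here would be circular; your parenthetical fallback to Yamabe's theorem on almost connected groups is the citation you actually need, and with that the proof is correct.
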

\begin{proof}
If $G$ is $\sigma$-compact and $H\subseteq G$ is open, then $G/H$ is discrete and
$\sigma$-compact, and thus countable. Hence (1) implies (2).
By \cite[Lemma~2.3.1]{MoZi}, (2) implies (3).

Going from (3) to (1) is much deeper.
Suppose that $H\subseteq G$ is an almost connected open subgroup of countable index.
We must show that $G$ is $\sigma$-compact.
By Iwasawa's Splitting Theorem, see \cite[p.~547, Theorem~1]{iwa}, \cite[Theorem A]{Gluskov} 
for the local version and 
\cite[Theorem 4.1]{hofmori},
\cite[Theorem 4.4]{HofmannKramer} for the global version,
there exists a compact subgroup $K\subseteq H$, a $1$-connected Lie group $L$
and an open continuous homomorphism $\phi:L\times K\longrightarrow H$ with discrete kernel,
and with $\phi(1,k)=k$. In particular, the group
$H'=\phi(L\times K)$ is open in $H$. Every connected locally compact group is compactly
generated and thus $\sigma$-compact. Therefore $L$ is $\sigma$-compact, and hence
$H'$ is $\sigma$-compact. The homomorphism $\pi:H\longrightarrow H/H^\circ$
annihilates the connected group $\phi(L\times\{1\})$, 
whence $\pi(H')=\pi(\phi(\{1\}\times K))$ is compact and open in $H/H^\circ$.
Thus $\pi(H')$ has finite index in $H/H^\circ$.
Since $H'\subseteq H$ is open, we have that $H^\circ\subseteq H'$ and thus
$H'$ has finite index in $H$. In particular, the $\sigma$-compact group
$H'$ has countable index in $G$. Then $G$, being a countable union
of $\sigma$-compact sets, is $\sigma$-compact.

\smallskip
An alternative but equally nontrivial proof of a more geometric nature
may be instructive.
Let $H$ be an open  almost connected
subgroup  of countable index in $G$.
We have to show that $H$ is $\sigma$-compact, for then $G$ will be $\sigma$-compact as well.
Now $H$ contains a subspace $E$ homeomorphic to 
${\mathbb R}^n$ for
some  $n\in\{0,1,2,\dots\}$ and a (maximal) compact subgroup $C$ such that
$(e,c)\longmapsto ec:E\times C\to H$ 
is a homeomorphism. For a proof that is even valid for almost connected
pro-Lie groups see~\cite[Corollary 8.5 on~p.~380]{HoMoAlmost}, and also \cite{HoMoAxioms}.
Since ${\mathbb R}^n\times C$ 
is clearly $\sigma$-compact, so is $H$.
\end{proof}

In order to prove continuity and openness of abstract group homomorphisms, we
will use Pettis' Theorem. This requires several notions related to Baire's Category 
Theorem, which we review now.

\subsection{Meager sets, Baire spaces and almost open sets}
A subset $A$ of a space $X$ is called \emph{nowhere dense} if its closure $\overline A$ has empty 
interior. A subset  $B\subseteq X$ which is contained in a countable union of nowhere dense sets is called
\emph{meager} (or of \emph{first category} in older books). 
A countable union of meager sets is again meager.
If every meager subset of $X$ has empty interior, then $X$ is called a
\emph{Baire space}. By Baire's Category Theorem, every locally compact space and every completely metrizable space is
a Baire space \cite[XI.10.1 and XIV.4.1]{Dug}.

A subset $E\subseteq X$ of a space $X$ is called \emph{almost open} if there exists an open set $U\subseteq X$
such that the symmetric difference $(E\setminus U)\cup(U\setminus E)$ is meager. Almost open sets are also called
\emph{Baire measurable}. The almost open sets form a $\sigma$-algebra \cite[Theorem 4.3]{Oxtoby},
which contains all Borel sets in~$X$.

\subsection{Polish spaces and analytic sets}
A space is called \emph{Polish} if it is second countable and metrizable by
a complete metric. 
A subset $A$ of a Polish space $X$ is called \emph{analytic} or \emph{Suslin} if 
there exists a Polish space $Z$ and an continuous map $\psi:Z\longrightarrow X$ with
$A=\psi(X)$. Analytic sets have remarkable properties; among other things, they are
almost open. On the other hand, the class of Polish spaces is way too narrow for 
our purposes. We thus propose in \ref{AlmostPolishDef} a formal definition which captures those properties
of Polish spaces and analytic sets which are important for us.
We refer to \cite{BourbakiTopology}, \cite{Kechris}, and \cite{Kur}
for more results on Polish spaces and analytic sets.
\subsection{$\mathbfcal K$-analytic sets}
Let $\mathbfcal K$ be a class of topological spaces which is closed under finite
products and under passage to closed subsets. 
In other words, if $X,Y\in\mathbfcal K$ and if $A\subseteq X$ is closed, then 
$A,X\times Y\in\mathbfcal K$.
Examples of classes satisfying these assumptions are the class $\mathbfcal P$ of Polish spaces,
the class $\mathbfcal C$ of compact spaces, and the class
$\mathbfcal L^\sigma$ of locally compact $\sigma$-compact spaces.

Given such a class $\mathbfcal K$ and a space $X$ in $\mathbfcal K$, we call a
subset $A\subseteq X$ a \emph{$\mathbfcal K$-analytic set} if there exists a
space $Z\in \mathbfcal K$ and a continuous map $\psi:Z\longrightarrow X$ with 
$\psi(Z)=A$. We let $\mathbfcal K_a$ denote the class of all pairs
$(X,A)$, where $X\in \mathbfcal K$ and $A\subseteq X$ is $\mathbfcal K$-analytic.
The following is straightforward.
\begin{Lem}\label{Pullback}
If $X\in\mathbfcal K$ and $(Y,B)\in\mathbfcal K_a$ and if $\phi:X\longrightarrow Y$
is continuous, then $(X,\phi^{-1}(B))\in\mathbfcal K_a$.
If $(X,A_1),(X,A_2)\in\mathbfcal K_a$, then $(X,A_1\cap A_2)\in\mathbfcal K_a$.
\end{Lem}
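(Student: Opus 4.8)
The plan is to verify both assertions directly from the definition of $\mathbfcal K$-analytic set, using only that $\mathbfcal K$ is closed under finite products and under passage to closed subspaces. For the first claim, suppose $B = \psi(Z)$ for some $Z \in \mathbfcal K$ and continuous $\psi : Z \to Y$, and let $\phi : X \to Y$ be continuous. First I would form the fibre product $W = \{(x,z) \in X \times Z \mid \phi(x) = \psi(z)\}$. Since $X \times Z \in \mathbfcal K$ by closure under finite products, and since $W$ is the preimage of the diagonal of $Y \times Y$ under the continuous map $(x,z) \mapsto (\phi(x), \psi(z))$, the set $W$ is closed in $X \times Z$ (here one uses that $Y$ is Hausdorff, which holds since all spaces in sight are Hausdorff). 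Hence $W \in \mathbfcal K$. The projection $\mathrm{pr}_X : W \to X$ is continuous, and its image is exactly $\{x \in X \mid \phi(x) \in \psi(Z)\} = \phi^{-1}(B)$. Therefore $(X, \phi^{-1}(B)) \in \mathbfcal K_a$.

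For the second claim, suppose $A_1 = \psi_1(Z_1)$ and $A_2 = \psi_2(Z_2)$ with $Z_1, Z_2 \in \mathbfcal K$ and $\psi_i : Z_i \to X$ continuous. I would again use a fibre product: set $W = \{(z_1,z_2) \in Z_1 \times Z_2 \mid \psi_1(z_1) = \psi_2(z_2)\}$, which is closed in $Z_1 \times Z_2 \in \mathbfcal K$ by the same Hausdorff argument, hence lies in $\mathbfcal K$. The map $W \to X$ sending $(z_1,z_2) \mapsto \psi_1(z_1) = \psi_2(z_2)$ is continuous, and its image is precisely $A_1 \cap A_2$. Thus $(X, A_1 \cap A_2) \in \mathbfcal K_a$. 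Alternatively, one can deduce the intersection statement from the pullback statement together with closure of $\mathbfcal K_a$ under products (the diagonal map $X \to X \times X$ pulls back $A_1 \times A_2$ to $A_1 \cap A_2$), but the direct fibre-product construction is cleanest.

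There is essentially no obstacle here; the only point requiring a moment's care is the verification that the fibre product $W$ is closed, which is where the standing Hausdorff hypothesis on all spaces is used. The lemma is indeed "straightforward" as claimed, and it will be applied repeatedly later to conclude that preimages $\phi^{-1}(V)$ of suitable identity neighborhoods $V$ are $\mathbfcal K$-analytic, and that finite intersections of such preimages remain $\mathbfcal K$-analytic.
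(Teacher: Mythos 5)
Your proof is correct and follows essentially the same route as the paper: the first claim is proved by exactly the same pullback $Q=\{(x,z)\mid \phi(x)=\psi(z)\}$, closed in $X\times Z$ by Hausdorffness, projected to $X$. For the intersection claim your direct fibre product of $\psi_1$ and $\psi_2$ is an equivalent variant of the paper's argument (which pulls back $A_1\times A_2$ along the diagonal $X\to X\times X$, a route you yourself note as an alternative), so there is nothing to correct.
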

\begin{proof}
Let $Z\in\mathbfcal K$ and let $\psi:Z\longrightarrow Y$ be a continuous map with
$\psi(Z)=B$. Let $Q$ denote the pullback of the diagram 
$X\xrightarrow{\ \phi\ } Y\xleftarrow{\ \psi\ } Z$,
i.e. $Q=\{(x,z)\in X\times Z\mid \phi(x)=\psi(z)\}$. Then the projection onto
the first coordinate maps $Q$ onto $\phi^{-1}(B)$. Since $Q$ is closed in
$X\times Z$, it is contained in $\mathbfcal K$. Hence $\phi^{-1}(B)$ is
$\mathbfcal K$-analytic.

For the second claim, consider the diagonal embedding $X\longrightarrow X\times X$.
The preimage of $A_1\times A_2$ is $A_1\cap A_2$. Since $A_1\times A_2$ is
$\mathbfcal K$-analytic, the same is true by the first claim for its preimage $A_1\cap A_2$.
\end{proof}

For the following lemma recall that 
$\mathbfcal P$ is the class of Polish spaces, 
${\mathbfcal L}^\sigma$ is the class of locally compact and $\sigma$-compact  spaces, and 
$\mathbfcal C$ is the class of compact spaces, 
\begin{Lem} For these classes $\mathbfcal K$, the classes $\mathbfcal K_a$ are as follows:
\begin{enumerate}[\rm(1)]
\item  $\mathbfcal P_a$ consists of all pairs
$(X,A)$, where $X$ is Polish and $A\subseteq X$ is an analytic set (in the sense of descriptive
set theory).
\item  $\mathbfcal L^\sigma_a$ consists of all pairs
$(X,A)$, where $X$ is locally compact $\sigma$-compact and $A\subseteq X$ is 
$\sigma$-compact.
\item For the class $\mathbfcal C$ of compact spaces, $\mathbfcal C_a$ consists of all 
pairs $(X,A)$, where $X$ is compact and $A\subseteq X$ is closed.
\end{enumerate}
\end{Lem}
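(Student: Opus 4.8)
The plan is to prove each of the three descriptions by a double inclusion: for one direction I would push the defining property of the class forward through a continuous map, and for the other I would exhibit an explicit space $Z\in\mathbfcal K$ together with a continuous surjection onto the given set. Statement (1) is essentially a restatement of definitions. By construction $(X,A)\in\mathbfcal P_a$ means that $X$ is Polish and $A=\psi(Z)$ for some Polish $Z$ and some continuous $\psi:Z\to X$, which is exactly what it means for $A\subseteq X$ to be analytic, with the usual convention that the empty set (realised by $Z=\emptyset$) counts as analytic. So nothing is needed here beyond unwinding the definition recalled above.

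For statement (3) I would argue as follows. If $Z$ is compact and $\psi:Z\to X$ is continuous, then $\psi(Z)$ is a compact subset of the Hausdorff space $X$ and hence closed, so every $\mathbfcal C$-analytic subset of $X$ is closed. Conversely, if $A\subseteq X$ is closed then $A$, being a closed subset of the compact space $X$, is itself compact; thus $A\in\mathbfcal C$, and the inclusion $A\hookrightarrow X$ is a continuous map from a member of $\mathbfcal C$ with image $A$, so $A$ is $\mathbfcal C$-analytic.

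For statement (2) the forward implication is again routine. If $Z\in\mathbfcal L^\sigma$, write $Z=\bigcup_n Z_n$ with each $Z_n$ compact; then for any continuous $\psi:Z\to X$ the image $\psi(Z)=\bigcup_n\psi(Z_n)$ is a countable union of compact sets and hence $\sigma$-compact. For the converse, given a $\sigma$-compact subset $A=\bigcup_n K_n\subseteq X$ with each $K_n$ compact, I would take for $Z$ the topological disjoint union $\bigsqcup_n K_n$ of the pieces. This space is Hausdorff; it is locally compact, since each point lies in its own clopen summand $K_n$, which is a compact neighbourhood; and it is $\sigma$-compact, since the countably many clopen summands $K_n$ are compact and cover $Z$. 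Hence $Z\in\mathbfcal L^\sigma$, and the map $Z\to X$ that restricts on each summand to the inclusion $K_n\hookrightarrow X$ is continuous with image $A$, so $A$ is $\mathbfcal L^\sigma$-analytic.

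The lemma is elementary, so there is no genuine obstacle; the one point worth flagging is that in (2) one cannot just take $Z=A$ with the subspace topology, because a $\sigma$-compact subspace of a locally compact $\sigma$-compact space need not itself be locally compact and so need not lie in $\mathbfcal L^\sigma$. Passing to the external sum $\bigsqcup_n K_n$ is precisely what repairs this defect.
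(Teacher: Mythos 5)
Your proof is correct and follows essentially the same route as the paper: (1) is definitional, (3) uses compact image closed in Hausdorff plus closed subsets of compacts being compact, and (2) is handled exactly as in the paper by noting continuous images of $\sigma$-compact spaces are $\sigma$-compact and, conversely, by mapping the topological disjoint union $\coprod_n K_n$ onto $A$. Your closing remark about why one cannot simply take $Z=A$ with the subspace topology is a sensible clarification but does not change the argument.
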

\begin{proof}
Claim (1) is true by definition.
For claim (2)  we note first of all that the continuous image $A$ of a $\sigma$-compact
space $Z$ is again $\sigma$-compact. Conversely, if 
$A=\bigcup_{i\geq 0}C_i$ for a countable family of compact sets $C_i\subseteq X$, then the 
topological coproduct
(the disjoint union)
$Z=\coprod_{i\geq 0} C_i$ is locally compact $\sigma$-compact and maps continuously onto $A$.
Claim (3) is  clear.
\end{proof}

Recall that a space is called
\emph{Lindel\"of} if every open covering has a countable subcovering.
Both second countable spaces and $\sigma$-compact spaces are Lindel\"of.
In particular, every Polish space and every locally compact $\sigma$-compact space
is Lindel\"of.
\begin{Def}\label{AlmostPolishDef}
We call a class $\mathbfcal K$ of spaces which is closed under finite products and
under passage to closed subsets \emph{almost Polish} if it has the following three properties.
\begin{enumerate}[\rm(1)]
 \item 
 Every member of $\mathbfcal K$ is Lindel\"of.

\item  No nonempty member of $\mathbfcal K$ is meager in itself.

\item Every $\mathbfcal K$-analytic set is almost open.
\end{enumerate}
\end{Def}

\begin{Prop}
The class $\mathbfcal P$ of Polish spaces, 
the class $\mathbfcal L^\sigma$ of
locally compact $\sigma$-compact spaces,
and the class $\mathbfcal C$ of compact spaces
are almost Polish.
\end{Prop}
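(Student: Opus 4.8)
The plan is to verify the three defining properties in Definition~\ref{AlmostPolishDef} for each of the three classes, having first recorded that the two structural hypotheses (closure under finite products and under passage to closed subspaces) are automatic here: a closed subspace of a Polish (resp.\ locally compact $\sigma$-compact, resp.\ compact) space is again of that type, and likewise for finite products, as was already noted above. So the work is entirely in checking (1), (2), (3).

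For property (1) I would simply recall, as observed just before Definition~\ref{AlmostPolishDef}, that second countable spaces and $\sigma$-compact spaces are Lindel\"of; this covers $\mathbfcal P$ and $\mathbfcal L^\sigma$, while compact spaces are trivially Lindel\"of, which covers $\mathbfcal C$. For property (2) the point is Baire's Category Theorem in its two classical forms. A completely metrizable space is a Baire space, so no Polish space is meager in itself; a locally compact Hausdorff space is a Baire space, and since every compact Hausdorff space is in particular locally compact Hausdorff, no member of $\mathbfcal L^\sigma$ or of $\mathbfcal C$ is meager in itself. Both facts are standard, see \cite[XI.10]{Dug}.

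The substantive step is property (3), where the three classes behave rather differently. For $\mathbfcal C$ and $\mathbfcal L^\sigma$ I would use the explicit description of the analytic sets established above: a $\mathbfcal C$-analytic subset of a compact space $X$ is exactly a closed subset of $X$, and an $\mathbfcal L^\sigma$-analytic subset of a locally compact $\sigma$-compact space $X$ is exactly a $\sigma$-compact subset of $X$. A closed set is Borel; a $\sigma$-compact set is a countable union of compact sets, each of which is closed in the Hausdorff space $X$, hence it is an $F_\sigma$ set and thus Borel. Since the almost open sets form a $\sigma$-algebra containing all Borel sets \cite[Theorem~4.3]{Oxtoby}, in both cases every analytic set is almost open. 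For $\mathbfcal P$ no such soft argument is available: property~(3) here is precisely the classical theorem that every analytic (Suslin) subset of a Polish space has the Baire property, which I would quote directly from \cite{Kechris} (or \cite{Kur}).

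This last input — analytic sets in Polish spaces are almost open — is the one genuinely nontrivial ingredient, and it is also the historical motivation for the whole axiomatic setup; everything else in the proof is an assembly of elementary facts about Lindel\"of spaces, Baire category, and the $\sigma$-algebra of Baire measurable sets.
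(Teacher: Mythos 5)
Your proposal is correct and follows essentially the same route as the paper: Lindel\"of via second countability or $\sigma$-compactness, non-meagerness via Baire's theorem for complete metric and locally compact spaces, and property (3) via the identification of $\mathbfcal C$-analytic and $\mathbfcal L^\sigma$-analytic sets as closed resp.\ $\sigma$-compact (hence Borel, hence almost open), with the classical theorem on analytic sets in Polish spaces quoted for $\mathbfcal P$. The only differences are cosmetic, such as spelling out the $F_\sigma$ argument and the citation chosen for the Polish case.
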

\begin{proof}
We noted already that every second countable space and every $\sigma$-compact space
is Lindel\"of. Also, every complete metric space and every locally compact space is
a Baire space and hence not meager in itself
\cite[XI.10.1 and XIV.4.1]{Dug}. The fact that every analytic set in a Polish
space is almost open is proved in \cite[p.~482]{Kur}.
Hence $\mathbfcal P$ is almost Polish.
On the other hand, every $\sigma$-compact set is clearly a Borel set, and
Borel sets are almost open. 
Therefore $\mathbfcal L^\sigma$ is almost Polish.
For the class $\mathbfcal C$ we note that in particular every closed set is almost open,
whence $\mathbfcal C$ is almost Polish.
\end{proof}
For groups in an almost Polish class we have a continuity and open mapping theorem as follows.

\begin{Thm}\label{ContinuousOpenTheorem}
Let $\mathbfcal K$ be a class of almost Polish spaces. Suppose that 
$K,G$ are topological groups which belong to $\mathbfcal K$, and that 
\[K\xrightarrow{\ \phi\ } G\]
is an abstract group homomorphism.

Assume that for every identity neighborhood $U\subseteq G$ there exists an identity neighborhood $V\subseteq U$
such that $\phi^{-1}(V)$ is almost open (which is the case if $\phi^{-1}(V)$ is $\mathbfcal K$-analytic).
Then $\phi$ is continuous.

If the homomorphism $\phi$ is continuous and surjective, then $\phi$ is open.
\end{Thm}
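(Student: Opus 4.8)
The plan is to prove continuity first via Pettis' Theorem and then deduce openness from continuity and surjectivity together with a Baire-category argument using the Lindel\"of property.

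For continuity, I would proceed as follows. It suffices to show that $\phi$ is continuous at the identity, since $\phi$ is a homomorphism. So fix an identity neighborhood $U\subseteq G$; by shrinking we may assume $U$ is open, and by continuity of multiplication we may pick a symmetric open identity neighborhood $W\subseteq G$ with $WW^{-1}\subseteq WW\subseteq U$. Apply the hypothesis to $W$: there is an identity neighborhood $V\subseteq W$ with $\phi^{-1}(V)$ almost open. Now $G$ is Lindel\"of, hence can be covered by countably many translates $g_nV$; pulling back, $K=\bigcup_n \phi^{-1}(g_n)\phi^{-1}(V)$, so $K$ is a countable union of translates of the almost open set $\phi^{-1}(V)$. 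Since $K\in\mathbfcal K$ is not meager in itself (property (2) of an almost Polish class), at least one translate $\phi^{-1}(g_n)\phi^{-1}(V)$ is non-meager, and a translate of an almost open set is almost open, so $\phi^{-1}(V)$ itself is a non-meager almost open subset of the Baire group $K$. By Pettis' Theorem, $\phi^{-1}(V)\bigl(\phi^{-1}(V)\bigr)^{-1}=\phi^{-1}(V)\phi^{-1}(V^{-1})$ is an identity neighborhood in $K$; it is contained in $\phi^{-1}(VV^{-1})\subseteq\phi^{-1}(WW^{-1})\subseteq\phi^{-1}(U)$. Thus $\phi^{-1}(U)$ is an identity neighborhood, proving continuity at $1$ and hence everywhere.

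For openness, assume in addition that $\phi$ is continuous and surjective. Let $U\subseteq K$ be an open identity neighborhood; we must show $\phi(U)$ is an identity neighborhood in $G$. Choose a symmetric open identity neighborhood $W\subseteq K$ with $WW\subseteq U$. Again $K$ is Lindel\"of, so countably many translates $k_nW$ cover $K$; applying $\phi$ and using surjectivity, $G=\bigcup_n \phi(k_n)\phi(W)$. Since $G$ is not meager in itself, some $\phi(k_n)\phi(W)$ is non-meager, hence $\phi(W)$ is non-meager. The set $\phi(W)$ is a continuous image of the space $W$; taking the closure of $W$ inside $K$ (which lies in $\mathbfcal K$ since $\mathbfcal K$ is closed under closed subspaces) shows $\phi(\overline W)$ is $\mathbfcal K$-analytic, and $\phi(W)\subseteq\phi(\overline W)$, so $\phi(W)$ is contained in an almost open — in particular non-meager, Baire-measurable — set; alternatively one argues directly that the non-meager set $\phi(W)$, being sandwiched between $\phi(W)$ and its analytic closure, has the Steinhaus property. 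Either way, Pettis' Theorem applied in $G$ gives that $\phi(W)\phi(W)^{-1}=\phi(W)\phi(W^{-1})=\phi(W)\phi(W)\subseteq\phi(WW)\subseteq\phi(U)$ is an identity neighborhood in $G$. Hence $\phi(U)$ contains an identity neighborhood, so $\phi$ is open.

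The main obstacle is the openness half: Pettis' Theorem in the standard form requires the relevant set to be almost open, but $\phi(W)$ is a priori only a continuous image, not obviously almost open as a subset of $G$. The fix, and the place requiring care, is to invoke property (3) of an almost Polish class to see that $\phi(\overline W)$ is $\mathbfcal K$-analytic, hence almost open, and that it differs from $\phi(W)$ only within a set controlled well enough for the Steinhaus-type conclusion to survive; concretely, one shows the non-meager almost open set $\phi(\overline W)$ satisfies $\phi(\overline W)^{-1}\phi(\overline W)\subseteq\phi(\overline W\,\overline W)^{-1}\cdots$ — but to keep the product inside $\phi(U)$ one should instead run the covering argument with a third symmetric neighborhood $W'$ with $W'W'W'W'\subseteq U$ and apply Pettis to $\phi(W')$, absorbing the passage from $W'$ to $\overline{W'}$. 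I would state Pettis' Theorem precisely beforehand to make this step clean, and otherwise the argument is routine.
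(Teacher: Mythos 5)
Your strategy coincides with the paper's (Pettis' Theorem plus a Lindel\"of covering and non-meagerness of the ambient group), but one step in the continuity half fails as written. You cover $G$ by countably many translates $g_nV$ with $g_n\in G$ arbitrary and then assert $K=\bigcup_n\phi^{-1}(g_n)\phi^{-1}(V)$. In general one only has $\phi^{-1}(g_n)\phi^{-1}(V)\subseteq\phi^{-1}(g_nV)$, with equality when $g_n\in\phi(K)$; and the continuity statement does \emph{not} assume $\phi$ surjective, so some $g_n$ may have $\phi^{-1}(g_n)=\emptyset$ while $\phi^{-1}(g_nV)\neq\emptyset$. Then the union on the right can miss points of $K$, and the conclusion that some genuine left translate of $E=\phi^{-1}(V)$ is non-meager (hence $E$ itself, since translations are homeomorphisms) does not follow. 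The repair is the one the paper uses: the translating elements must be taken from $K$ itself. The closed set $\overline{\phi(K)}$ is Lindel\"of (closed subspace of the Lindel\"of space $G$), and since $\phi(K)$ is a dense subgroup of it one finds, after an innocuous further shrinking of the neighborhood, countably many $g_i\in K$ with $\phi(K)\subseteq\bigcup_i\phi(g_i)V$, whence $K=\bigcup_i g_iE$; from there your argument (non-meagerness of $E$, Pettis, $EE^{-1}\subseteq\phi^{-1}(VV^{-1})\subseteq\phi^{-1}(U)$) goes through verbatim.

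The openness half, including your own correction, is essentially the paper's proof. The paper sidesteps the closure bookkeeping by invoking regularity of topological groups to choose a \emph{closed} identity neighborhood $C\subseteq K$ with $gCC^{-1}\subseteq W$; then $C$ is a closed subspace of $K$, hence lies in $\mathbfcal K$, so $D=\phi(C)$ is $\mathbfcal K$-analytic and therefore almost open, the covering $K=\bigcup_i a_iC$ together with surjectivity and non-meagerness of $G$ shows $D$ is non-meager, and Pettis gives that $DD^{-1}$, and hence $\phi(g)DD^{-1}\subseteq\phi(W)$, is a neighborhood. Your variant --- a symmetric open $W'$ with $W'W'W'W'\subseteq U$ and Pettis applied to the $\mathbfcal K$-analytic set $\phi(\overline{W'})$, using $\overline{W'}\subseteq W'W'$ --- is the same argument; it would read better if you worked with the closed neighborhood from the start and dropped both the unjustified intermediate application of Pettis to $\phi(W)$ and the vague ``sandwiched\ldots Steinhaus property'' remark, neither of which is needed.
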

\begin{proof}
Our proof is based on Pettis' Theorem, which says the following.
If $E$ is a subset of a topological group, and if $E$ is almost open and not meager,
then $EE^{-1}$ is an identity neighborhood \cite[Theorem 1]{Pettis}.

Now we prove the first claim.
Given an identity neighborhood $U\subseteq G$, 
we choose a smaller identity neighborhood $V\subseteq U$ such that $VV^{-1}\subseteq U$, and
such that $E=\phi^{-1}(V)$ is almost open.
Since $G$ is Lindel\"of, the closed set $\overline{\phi(K)}$ is also Lindel\"of.
Hence there exists a countable set of elements $g_i\in K$, for $i\in\mathbb N$,
such that $\phi(K)\subseteq\bigcup_{i\geq0}\phi(g_i)V$. Therefore $K=\bigcup_{i\geq 0}g_iE$.
Since $K$ is not meager in itself, $E$ cannot be meager. 
By Pettis' Theorem, $EE^{-1}$ is an identity neighborhood in $K$, with $\phi(EE^{-1})\subseteq U$. It follows
that $\phi$ is continuous at the identity, and hence continuous everywhere
\cite[III. Proposition~23]{BourbakiTopology}.

For the second claim let $W\subseteq K$ be open, let $g\in W$ and let $C\subseteq K$ be a 
closed identity neighborhood with $gCC^{-1}\subseteq W$.
Such a neighborhood $C$ exists since every topological group is regular \cite[II.4.8]{HewittRoss}.
Then $\phi(C)=D$ is $\mathbfcal K$-analytic.
Since $K$ is Lindel\"of, there exists a countable set of elements $a_i\in K$, for $i\in\mathbb N$, such that 
$K=\bigcup_{i\geq 0}a_iC$. It follows that $G=\bigcup_{i\geq 0}\phi(a_i)D$.
Since $G$ is not meager in itself,
$D$ cannot be meager. By Pettis' Theorem, $DD^{-1}$ is an identity neighborhood,
and hence $\phi(g)DD^{-1}$ is a neighborhood of $\phi(g)$ which is contained in $\phi(W)$.
Thus $\phi(W)$ is open.
\end{proof}
In order to apply Theorem \ref{ContinuousOpenTheorem}, we need conditions
which ensure that we can construct many $\mathbfcal K$-analytic sets.
The following lemma supplies such conditions.
\begin{Lem}
\label{LittleLemma}
Let $\mathbfcal K$ be a class of spaces which is closed under finite products 
and the passage to
closed subsets and let $K$ be a topological group 
in $\mathbfcal K$, with a normal 
(but not necessarily closed) $\mathbfcal{K}$-analytic subgroup
$N$ of $K$.
Put $H=K/N$ (as an abstract group) and let $\pi:K\longrightarrow H$ 
denote the natural quotient homomorphism.
Let $A,B\subseteq H$ be subsets, and assume that $\pi^{-1}(A)$ and $\pi^{-1}(B)$ 
are $\mathbfcal{K}$-analytic.
Then the following sets are also $\mathbfcal{K}$-analytic:
\begin{enumerate}[\rm(1)]
 \item the set $\pi^{-1}(AB)$,
 \item the set $\pi^{-1}(\{[a,b]\mid a\in A,\, b\in B\})$,
 \item all sets $\pi^{-1}(\{h\in H\mid [h,y]\in A\})$ for  $y\in H$, and
 \item all sets $\pi^{-1}(\operatorname{Cen}_H(X))$ for every finite set $X\subseteq H$.
\end{enumerate}
\end{Lem}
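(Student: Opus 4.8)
The plan is to establish each of the four closure properties by exhibiting the relevant preimage as the image of an explicitly constructed space in $\mathbfcal K$, repeatedly using the pullback construction from Lemma~\ref{Pullback} together with the observation that multiplication and commutator maps in $K$ are continuous. The key point throughout is that although $N$ is not assumed closed, and hence $\pi^{-1}(S)$ need not be closed in $K$ for $S\subseteq H$, the hypothesis tells us that $\pi^{-1}(A)$ and $\pi^{-1}(B)$ \emph{are} $\mathbfcal K$-analytic, i.e.\ continuous images of members of $\mathbfcal K$; and the property of being a continuous image of a $\mathbfcal K$-member is well-behaved under the operations we need, because the relevant maps on $K$ are continuous and $\mathbfcal K$ is closed under products and closed subspaces.

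For (1), note that $\pi^{-1}(AB)=\pi^{-1}(A)\cdot\pi^{-1}(B)$, since $\pi$ is a surjective homomorphism with kernel $N\subseteq\pi^{-1}(A)\cap\pi^{-1}(B)$. Pick $Z_A,Z_B\in\mathbfcal K$ with continuous surjections $\psi_A:Z_A\to\pi^{-1}(A)$ and $\psi_B:Z_B\to\pi^{-1}(B)$; then $Z_A\times Z_B\in\mathbfcal K$ and the composite
\[
Z_A\times Z_B\xrightarrow{\ \psi_A\times\psi_B\ }K\times K\xrightarrow{\ \mathrm{mult}\ }K
\]
is continuous with image $\pi^{-1}(A)\pi^{-1}(B)=\pi^{-1}(AB)$. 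For (2), observe similarly that $\pi^{-1}(\{[a,b]\mid a\in A,b\in B\})=\{[x,y]\mid x\in\pi^{-1}(A),\ y\in\pi^{-1}(B)\}$, because $\pi([x,y])=[\pi(x),\pi(y)]$ and $\ker\pi=N$ is contained in both preimages; so this set is the image of $Z_A\times Z_B$ under the continuous commutator map $(x,y)\mapsto[x,y]$ precomposed with $\psi_A\times\psi_B$, hence $\mathbfcal K$-analytic.

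For (3), fix $y\in H$ and choose $k\in K$ with $\pi(k)=y$. Then $\{h\in H\mid[h,y]\in A\}=\pi(\{x\in K\mid[x,k]\in\pi^{-1}(A)\})$, and taking preimages under $\pi$ we get $\pi^{-1}(\{h\in H\mid[h,y]\in A\})=\{x\in K\mid[x,k]\in\pi^{-1}(A)\}=f_k^{-1}(\pi^{-1}(A))$, where $f_k:K\to K$ is the continuous map $x\mapsto[x,k]$. Since $(K,\pi^{-1}(A))\in\mathbfcal K_a$ and $f_k$ is continuous, Lemma~\ref{Pullback} gives that $f_k^{-1}(\pi^{-1}(A))$ is $\mathbfcal K$-analytic. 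Finally, (4): for $X=\{h_1,\ldots,h_m\}\subseteq H$ pick $k_i\in K$ with $\pi(k_i)=h_i$; then $\pi^{-1}(\operatorname{Cen}_H(X))=\{x\in K\mid [x,k_i]\in N\text{ for all }i\}=\bigcap_{i=1}^m g_{k_i}^{-1}(N)$, where $g_{k}:K\to K$ is $x\mapsto[x,k]$, using once more that $\ker\pi=N$ so that $\pi(x)$ centralizes $h_i$ iff $[x,k_i]\in N$. Since $N$ is $\mathbfcal K$-analytic by hypothesis, each $g_{k_i}^{-1}(N)$ is $\mathbfcal K$-analytic by Lemma~\ref{Pullback}, and a finite intersection of $\mathbfcal K$-analytic subsets of $K$ is again $\mathbfcal K$-analytic, again by Lemma~\ref{Pullback}. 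The only mildly delicate points are the bookkeeping identities of the form $\pi^{-1}(\text{(algebraic expression in }A,B,y,X))=(\text{same expression in }\pi^{-1}(A),\pi^{-1}(B),k,\ldots)$, which all rest on the single fact that $\ker\pi=N$ is contained in every set of the form $\pi^{-1}(S)$ with $1\in S$ and is normal; I expect no real obstacle, the work is purely formal once one is careful that the chosen lifts $k,k_i$ of elements of $H$ do not matter because $N$ is normal.
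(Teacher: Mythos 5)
Parts (1), (3) and (4) of your argument are correct and in substance the same as the paper's (you work with chosen lifts $k,k_i$ where the paper uses a set-theoretic cross section $s:H\to K$; the pullback/product manipulations are identical). The problem is part (2): the identity $\pi^{-1}(\{[a,b]\mid a\in A,\,b\in B\})=\{[x,y]\mid x\in\pi^{-1}(A),\ y\in\pi^{-1}(B)\}$ is false in general. The left-hand side is a $\pi$-preimage, hence a union of full cosets $zN$, while the right-hand side need not be $N$-saturated: an element $z$ with $\pi(z)=[a,b]$ is of the form $[x,y]n$ with $n\in N$, and the factor $n$ cannot in general be absorbed into the commutator. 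Concretely, take $K=N\times Q$ a direct product with $N$ abelian and nontrivial, so $H=Q$, and let $A=\{a\}$, $B=\{b\}$ with $[a,b]\neq 1$. Then every commutator $[(n_1,q_1),(n_2,q_2)]=(1,[q_1,q_2])$ has trivial first coordinate, so your right-hand side misses all of $N\times\{[a,b]\}$ except one point, whereas the left-hand side is exactly $N\times\{[a,b]\}$. (The supporting remark that $\ker\pi=N$ is contained in $\pi^{-1}(A)\cap\pi^{-1}(B)$ is also unwarranted unless $1\in A\cap B$, though for (1) this does not matter: $\pi^{-1}(AB)=\pi^{-1}(A)\pi^{-1}(B)$ holds for any surjective homomorphism.)

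The repair is exactly the paper's formula: $\pi^{-1}(\{[a,b]\mid a\in A,\,b\in B\})=\{[x,y]\,n\mid x\in\pi^{-1}(A),\ y\in\pi^{-1}(B),\ n\in N\}$, i.e.\ your commutator set multiplied on the right by $N$. Since $N$ is $\mathbfcal K$-analytic by hypothesis, choose $Z_N\in\mathbfcal K$ with a continuous surjection $\psi_N:Z_N\to N$ and use the continuous map $Z_A\times Z_B\times Z_N\to K$, $(z,z',z'')\longmapsto[\psi_A(z),\psi_B(z')]\,\psi_N(z'')$, whose image is the preimage in question; as $\mathbfcal K$ is closed under finite products, this set is $\mathbfcal K$-analytic. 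With this one correction your proof of (2) goes through and the whole argument coincides with the paper's.
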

\begin{proof}
Let $s:H\longrightarrow K$ be a cross section for $\pi$, i.e. $\pi\circ s=\operatorname{id}_H$.
For every subset $Y\subseteq H$ we have $\pi^{-1}(Y)=s(Y)N$, and $s(xy)N=s(x)s(y)N$ holds
for all $x,y\in H$.
We note that for all $\mathbfcal K$-analytic subsets $P,Q\subseteq K$,
the product $PQ\subseteq K$ is again $\mathbfcal K$-analytic.

Claim (1) follows from the identity 
\[\pi^{-1}(AB)=s(AB)N=s(A)s(B)N=s(A)Ns(B)N=\pi^{-1}(A)\pi^{-1}(B).\]
The rightmost term is $\mathbfcal K$-analytic, since products of $\mathbfcal K$-analytic sets are again
$\mathbfcal K$-analytic.

For claim (2) we note that $\pi^{-1}([a,b])=s([a,b])N=[s(a),s(b)]N$,
whence \[\pi^{-1}(\{[a,b]\mid a\in A,\, b\in B\})=\{[x,y]n\mid \pi(x)\in A,\pi(y)\in B, n\in N\}.\]
This set is $\mathbfcal K$-analytic since products and inverses
of $\mathbfcal K$-analytic sets in the group $K$ are again $\mathbfcal K$-analytic.

Claim (3) follows from the identity
\[\pi^{-1}(\{h\in H\mid [h,y]\in A\})=\{g\in K\mid [g,s(y)]\in AN\}.\]
The right-hand side is $\mathbfcal K$-analytic by Lemma~\ref{Pullback}.

For (4) we note that $\pi^{-1}(\operatorname{Cen}_H(x))=\{g\in K\mid [g,s(x)]\in N\}$
by (3), and this set is $\mathbfcal K$-analytic.
Since a finite intersection of $\mathbfcal K$-analytic sets is again $\mathbfcal K$-analytic
by Lemma~\ref{Pullback}, claim (4) follows.
\end{proof}

\section{Building up good neighborhoods in Lie groups}
\label{YamabeBaireSection}

Our basic strategy for proving rigidity of Lie groups
in almost Polish classes
is to construct a neighborhood basis of the identity 
in the Lie groups which consists of sets
that arise in a purely group-theoretic way. 

Guided by ideas which go back to van der Waerden,
we proceed in two steps. In the first step
we construct one specific compact identity neighborhood.
In the second step we show that we may
`shrink' this given neighborhood in a purely group-theoretic way
to an arbitrarily  small compact identity neighborhood. This
gives us the desired neighborhood basis of the identity.
The whole method depends on the existence of nontrivial commutators,
both in the Lie algebra and in the Lie group.

\subsection{Lie groups}\label{LieGroups}
Our convention is that 
a \emph{Lie group} $G$ is a locally compact group which is a smooth real manifold,
such that multiplication and inversion are smooth maps.
A priori, no countability assumptions will be imposed on Lie groups.
We remark that a connected Lie group is automatically second countable
and $\sigma$-compact. In particular, a Lie group $G$ is $\sigma$-compact if and only 
if $G^\circ$ has countable index in $G$, or equivalently, if $G$ is second countable.
The identity component of a Lie group is open, and every closed
subgroup of a Lie group is again a Lie group. This fact will be used without 
further mention.
The Lie algebra of a Lie group $G$ is denoted by $\operatorname{Lie}(G)$.

An \emph{analytic Lie subgroup} of a Lie group $G$ is a subgroup of the form 
$H=\langle\exp(\mathfrak h)\rangle$, where $\mathfrak h\subseteq\operatorname{Lie}(G)$ is a Lie 
subalgebra.\footnote{\ Such a subgroup is also called an \emph{integral subgroup}, a \emph{virtual subgroup}
or an \emph{analytic subgroup}---the terminology varies widely.}
Such an analytic Lie subgroup is path connected, but not necessarily closed. 
However, there exists always a connected Lie group 
$L$ and an injective Lie group homomorphism $\iota:L\longrightarrow G$ with $\iota(L)=H$.
The Lie group $L$ is unique up to canonical isomorphism \cite[3.19]{Warner}.
Note that the corestriction 
$L\longrightarrow H$ of $\iota$ is continuous and
bijective, but may fail to be open. The unique Lie 
subalgebra $\mathfrak h$ of $\operatorname{Lie}(G)$ is called \emph{the Lie algebra
of $H$}.

\subsection{Spacious sets}\label{Spacious}
Let $G$ be a topological group and let $C\subseteq G$ be a subset.
We call $C$ \emph{spacious} if some product of finitely many left translates of
$CC^{-1}$ has nonempty interior. 
In the latter case there exist thus elements
$g_1,\ldots,g_r\in G$, for some $r\geq 1$, such that 
\[
D=g_1CC^{-1}g_2CC^{-1}\cdots g_rCC^{-1}\] is an
identity neighborhood. Note that the property of being spacious is inherited from $C$ 
if we pass to a larger subset.

We prove below that a nonconstant path in a simple Lie group
is always spacious.
This is basically a Baire category argument, combined with Yamabe's Theorem.
We formalize this below.

But first we recall that a \emph{poset} $(X,\le)$ is a partially ordered set,
i.e. $\leq$ is a reflexive, antisymmetric and transitive binary relation on $X$.
A \emph{chain} in a poset is a totally ordered subset. A poset is \emph{inductive} if every chain has an upper bound. 
In an inductive poset, by Zorn's Lemma,  for any element $x$ there is a maximal 
element $m$ such that $x\le m$. A poset in which every chain is
finite is trivially inductive. For example, the set of all vector subspaces of a finite
dimensional vector space is such an inductive poset under inclusion.  
A \emph{poset map} $f$ is a map between posets that preserves the partial order,
i.e. $a\leq b$ implies that $f(a)\leq f(b)$.
\subsection{The Baire--Yamabe Process}
\label{BaireYamabeProcess}%
Let $G$ be a Lie group. We denote by $\operatorname{Fin}(G)$ the poset of all finite subsets
of $G$ containing the identity, ordered by inclusion.
By $\operatorname{PCS}(G)$ we denote the poset of all path connected subgroups of $G$,
again ordered by inclusion.

We define a poset map as follows. Let $C\subseteq G$ be a fixed
subset
and let $P$ denote the path component of the identity in $CC^{-1}$. Then $P$ is 
symmetric, path connected, and~$1\in P$. If $C$ contains a nonconstant path, then 
$P$ also contains an nonconstant path.
For any $F\in\operatorname{Fin}(G)$ we put 
\[X_C(F)=\bigcup\{gPg^{-1}\mid g\in F\}\text{ and }Y_C(F)=\langle X_C(F)\rangle.\leqno{(*)}\] 
We note that the set $X_C(F)$ is path connected, symmetric, and contains the identity.
Therefore the group \[Y_C(F)=X_C(F)\cup X_C(F)X_C(F)\cup X_C(F)X_C(F)X_C(F)\cup\cdots\]
is also path connected.
The assignment 
\[Y_C:\operatorname{Fin}(G)\longrightarrow \operatorname{PCS}(G)\] is a poset map
(i.e preserves the partial order $\subseteq$).
We call the function $Y_C$ the \emph{Baire--Yamabe Process}.

Now we recall Yamabe's Theorem, see \cite{Goto} or \cite[Theorem 9.6.1]{HilgertNeeb}.
\begin{Thm}[Yamabe]\label{YamabesTheorem}
Let $H$ be a path connected subgroup of a Lie group $G$.
Then $H$ is an analytic Lie subgroup.
In particular, $H$ determines a unique subalgebra 
$\operatorname{Lie}(H)\subseteq\operatorname{Lie}(G)$. 
\end{Thm}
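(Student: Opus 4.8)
The plan is to manufacture, directly out of $H$, a Lie subalgebra $\mathfrak h\subseteq\mathfrak g:=\operatorname{Lie}(G)$ and then to show that $H$ coincides with the analytic Lie subgroup $\langle\exp(\mathfrak h)\rangle$ it generates. First, since $H$ is path connected and contains the identity, $H\subseteq G^\circ$, so we may assume $G$ connected. Next, $\overline H$ is a closed, hence (by Cartan's closed-subgroup theorem) a Lie, subgroup of $G$, it is connected, and $H$ is a dense path connected subgroup of it; so, replacing $G$ by $\overline H$, we may assume in addition that $H$ is dense in $G$. It then suffices to realize $H$ as an analytic Lie subgroup of this smaller $G$.

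Consider the family of all Lie subalgebras $\mathfrak m\subseteq\mathfrak g$ with $\langle\exp(\mathfrak m)\rangle\subseteq H$. It is nonempty, so by finite-dimensionality it has a member of maximal dimension, and I claim this member is unique: if $\mathfrak m_1$ and $\mathfrak m_2$ are two such, then the subalgebra $\mathfrak m$ generated by $\mathfrak m_1\cup\mathfrak m_2$ again satisfies $\langle\exp(\mathfrak m)\rangle\subseteq H$, because the group generated by finitely many analytic subgroups is the analytic subgroup of the generated subalgebra (a standard fact); by maximality $\mathfrak m=\mathfrak m_1=\mathfrak m_2$. Denote this unique maximal member by $\mathfrak h$ and put $L:=\langle\exp(\mathfrak h)\rangle\subseteq H$. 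For $h\in H$ the conjugate $hLh^{-1}=\langle\exp(\operatorname{Ad}(h)\mathfrak h)\rangle$ again lies in $H$ and has the same dimension, so by uniqueness $\operatorname{Ad}(h)\mathfrak h=\mathfrak h$; since $H$ is dense and $\operatorname{Ad}$ is continuous, $\operatorname{Ad}(g)\mathfrak h=\mathfrak h$ for all $g\in G$, which upon differentiating gives $[\mathfrak g,\mathfrak h]\subseteq\mathfrak h$. Thus $\mathfrak h$ is an ideal and $L$ is a normal analytic subgroup of $G$.

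It remains to prove $L=H$; this is the crux. Suppose $h\in H\setminus L$. Since $H$ is path connected there is a path $c\colon[0,1]\to H$ with $c(0)=1$, $c(1)=h$, and this path is not contained in $L$. The heart of the argument --- the step I expect to be the main obstacle --- is to extract from $c$ a one-parameter subgroup of $H$ transverse to $L$, i.e.\ an $X\in\mathfrak g\setminus\mathfrak h$ with $\exp(\mathbb RX)\subseteq H$: then $\mathfrak h+\mathbb RX$ would generate a subalgebra strictly larger than $\mathfrak h$ whose analytic subgroup still lies in $H$, contradicting maximality and forcing $L=H$. To produce $X$ one runs the classical Gleason--Yamabe construction (of which the Baire--Yamabe process of \ref{BaireYamabeProcess} is a cousin): using that a Lie group has no small subgroups, one picks for small $t$ the least integer $n(t)$ with $c(t)^{n(t)}$ outside a fixed identity neighbourhood, passes to a sequence $t_k\to0$ along which the directions of $c(t_k)$ in canonical coordinates converge to a unit vector $X$, and uses the Baker--Campbell--Hausdorff formula to see that $c(t_k)^{\lfloor s\,n(t_k)\rfloor}\to\exp(sX)$ for every $s\in\mathbb R$. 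Two points require care: one must arrange, by choosing $c$ or a subpath suitably, that $X\notin\mathfrak h$ (transversality to $L$); and, more seriously, one must show that the limits $\exp(sX)$ lie in $H$ itself rather than merely in $\overline H$. This last point is exactly where path connectedness --- not just connectedness --- is indispensable, and it is where Baire's category theorem is invoked, applied to the countable family of (closed) sets of partial products of the compact set $c([0,1])$ inside $G$.

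Granting $L=H$, we conclude that $H=\langle\exp(\mathfrak h)\rangle$ is an analytic Lie subgroup of $G$ with $\operatorname{Lie}(H)=\mathfrak h$. The subalgebra $\mathfrak h$ is determined by $H$ intrinsically --- it is the largest subalgebra $\mathfrak m$ of $\operatorname{Lie}(G)$ with $\langle\exp(\mathfrak m)\rangle\subseteq H$, equivalently the span of the tangent vectors at $1$ of the one-parameter subgroups contained in $H$ --- so it is unique, which completes the proof.
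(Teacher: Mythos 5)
Your preparatory steps are fine: the reduction to $G=\overline H$ connected, the existence and uniqueness of a maximal subalgebra $\mathfrak h$ with $\langle\exp(\mathfrak h)\rangle\subseteq H$ (using that the group generated by two integral subgroups is the integral subgroup of the subalgebra they generate), and the density argument showing that $\mathfrak h$ is an ideal are all correct and standard. Note, for comparison, that the paper does not prove this statement at all: it quotes Yamabe's Theorem with references to \cite{Goto} and \cite[Theorem 9.6.1]{HilgertNeeb}, so the only question is whether your sketch amounts to a proof. It does not, because everything you have done so far is the routine part; the assertion $L=H$, i.e.\ that a path connected subgroup contains enough one-parameter subgroups to fill itself out, \emph{is} Yamabe's Theorem, and at exactly that point your argument is only a gesture.

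Concretely, the mechanism you propose cannot deliver what you need. The no-small-subgroups/escape-time construction produces $\exp(sX)$ as a limit of powers $c(t_k)^{\lfloor s\,n(t_k)\rfloor}$ of elements of $H$; such limits lie a priori only in $\overline H$, and after your reduction $\overline H=G$, so this gives no information about $H$ (for a dense path connected proper subgroup, e.g.\ an irrational winding viewed inside its closure, the limiting one-parameter subgroups need not lie in the subgroup). Baire's theorem does not repair this: the partial products of the compact set $c([0,1])$ are compact, but their union is the countable subgroup-union $\langle c([0,1])\rangle$, which is in general meager in $G$ (its closure can have strictly larger dimension), so no term has interior; and one cannot apply Baire ``inside $H$'' because $H$ carries no locally compact or completely metrizable topology before the theorem is proved. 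This is also why the paper's Baire--Yamabe process (Proposition~\ref{BaireLemma0}) invokes Baire only after Yamabe's Theorem has identified the generated path connected subgroup with $G^\circ$, an open subgroup, rather than as an ingredient of its proof. Likewise, your remaining point---arranging $X\notin\mathfrak h$---is not addressed: near the time the path exits $L$ the extracted directions may well lie in $\mathfrak h$, and the existence of a one-parameter subgroup of $H$ transverse to $L$ is essentially equivalent to the theorem itself. The cited proofs of Goto and Hilgert--Neeb are organized precisely to avoid taking limits that may leave $H$, and that idea is what is missing here; as it stands, the proposal reduces Yamabe's Theorem to an unproved claim of the same strength.
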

Yamabe's Theorem yields a poset bijection between the poset of path connected subgroups
$\operatorname{PCS}(G)$ and the poset $\operatorname{LSA}(\operatorname{Lie}(G))$ of all Lie subalgebras of 
$\operatorname{Lie}(G)$, ordered by inclusion. 
So the poset $\operatorname{PCS}(G)$ of all path connected subgroups of a Lie group is inductive
and  the same holds for the image $\im(Y_C)$  of the Baire--Yamabe Process. 
Note also that 
$P\subseteq Y_C(\{1\})\subseteq Y_C(F)$ holds
for all $F\in\operatorname{Fin}(G)$.

In the next proposition we consider an arbitrary  Lie group $G$ and specify 
the subset $C\subseteq G$ to be the image of a continuous path $f:[0,1]\longrightarrow G$ in $G$.
We note  that then $P=CC^{-1}$ is path connected. With theses assumptions we have the following result.
\begin{Prop}\label{BYP}
Let $Y_C(F)$ be a 
maximal member in the poset $\im(Y_C)\subseteq \operatorname{PCS}(G)$ of path connected subgroups of $G$
obtained by the Baire--Yamabe Process from $C=f([0,1])$. Then
$Y_C(F)$ is a normal subgroup containing $CC^{-1}$,
and, accordingly, its Lie algebra  $\operatorname{Lie}(Y_C(F))$ is a $G$-invariant ideal in $\operatorname{Lie}(G)$.
\end{Prop}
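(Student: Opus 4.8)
I would prove Proposition~\ref{BYP} in two moves: first that maximality in $\im(Y_C)$ forces $Y_C(F)$ to be normal in $G$, and second that normality of a path connected subgroup translates, via Yamabe's Theorem, into the corresponding Lie algebra being a $G$-invariant ideal. The second move is essentially formal once the first is in hand, so the real work is the normality statement.

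For normality, fix a maximal $Y_C(F)$ with $F\in\operatorname{Fin}(G)$ and let $g\in G$ be arbitrary; I want to show $gY_C(F)g^{-1}\subseteq Y_C(F)$. The idea is to enlarge $F$ to $F'=F\cup gF$ (throwing in the identity if need be, so that $F'\in\operatorname{Fin}(G)$), and compare $Y_C(F')$ with $Y_C(F)$. Since $Y_C$ is a poset map and $F\subseteq F'$, we have $Y_C(F)\subseteq Y_C(F')$; by maximality of $Y_C(F)$ in $\im(Y_C)$ this inclusion must be an equality, so $Y_C(F')=Y_C(F)$. Now unwind the definition: $X_C(F')=\bigcup_{h\in F'}hPh^{-1}$ contains both $X_C(F)$ and $\bigcup_{h\in F}(gh)P(gh)^{-1}=g\bigl(\bigcup_{h\in F}hPh^{-1}\bigr)g^{-1}=gX_C(F)g^{-1}$. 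Taking the generated subgroup, $Y_C(F')\supseteq gY_C(F)g^{-1}$, because conjugation by $g$ is a group automorphism and hence sends $\langle X_C(F)\rangle$ into $\langle gX_C(F)g^{-1}\rangle\subseteq\langle X_C(F')\rangle$. Combining with $Y_C(F')=Y_C(F)$ gives $gY_C(F)g^{-1}\subseteq Y_C(F)$. As $g\in G$ was arbitrary this already yields $gY_C(F)g^{-1}=Y_C(F)$ for all $g$ (apply the inclusion also to $g^{-1}$), so $Y_C(F)$ is normal. The containment $P\subseteq Y_C(F)$ is noted in the excerpt already (indeed $P\subseteq Y_C(\{1\})\subseteq Y_C(F)$ since $1\in P$ and $P=X_C(\{1\})$).

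For the Lie-algebra statement: $Y_C(F)$ is path connected, so by Yamabe's Theorem~\ref{YamabesTheorem} it is an analytic Lie subgroup with a well-defined subalgebra $\mathfrak{h}=\operatorname{Lie}(Y_C(F))\subseteq\operatorname{Lie}(G)$. For each $g\in G$, conjugation $c_g$ is a Lie group automorphism of $G$ carrying $Y_C(F)$ onto itself, hence its differential $\operatorname{Ad}(g)$ carries $\mathfrak h$ onto itself; since analytic subgroups are determined by their subalgebras, $\operatorname{Ad}(g)\mathfrak h=\mathfrak h$ for all $g\in G$. Differentiating the relation $\operatorname{Ad}(\exp tX)\mathfrak h=\mathfrak h$ at $t=0$ gives $[X,\mathfrak h]\subseteq\mathfrak h$ for all $X\in\operatorname{Lie}(G)$, i.e. $\mathfrak h$ is an ideal; and by construction it is already $\operatorname{Ad}(G)$-invariant. (If $G$ is not connected one does not get the ideal property from the $\operatorname{Ad}(G)$-invariance alone, but $\operatorname{Ad}(G^\circ)$-invariance does give it, and $G^\circ$ is generated by $\exp$, so the differentiation argument applies verbatim.)

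The main obstacle is really just being careful with the bookkeeping in the normality argument — in particular making sure that enlarging $F$ to $F\cup gF$ keeps us inside $\operatorname{Fin}(G)$ (it does, after adjoining $1$) and that "maximal in $\im(Y_C)$" is used correctly: we need that $Y_C(F)\subseteq Y_C(F')$ with both in $\im(Y_C)$ forces equality, which is exactly what maximality of $Y_C(F)$ as an element of that poset provides. There is no Baire category or analytic-set input needed here; those enter only later when one shows such maximal subgroups are large (e.g. all of $G$ when $\operatorname{Lie}(G)$ is simple). Everything in this proposition is a soft consequence of the poset formalism plus Yamabe.
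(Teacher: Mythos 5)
Your argument is correct and is essentially the paper's own proof: the normality is obtained exactly as in the paper, by comparing $Y_C(F)$ with $Y_C(F\cup gF)$ and using maximality to force $gX_C(F)g^{-1}\subseteq X_C(F\cup gF)\subseteq Y_C(F)$, and the containment $P\subseteq Y_C(F)$ is the remark already recorded after Yamabe's Theorem. The only difference is cosmetic: where you derive the ideal property by hand (uniqueness of the subalgebra under Yamabe's correspondence plus differentiating $\operatorname{Ad}(\exp tX)\mathfrak h=\mathfrak h$), the paper simply cites \cite[Proposition~5.54(i)]{HMCompact} or \cite[Corollary~11.1.3]{HilgertNeeb} for the fact that a normal analytic subgroup has an ideal as its Lie algebra.
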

\begin{proof}
We have $P=CC^{-1}$, since $C$ is connected.
For an arbitrary element  $g\in G$ we have
 $Y_C(F)\subseteq Y_C(F\cup gF)$ and by the maximality of $Y_C(F)$
 in $\im(Y_C)$,
 equality holds. Hence by $(*)$ we have
\begin{multline*}
gX_C(F)g^{-1}= 
g\left(\bigcup\{hPh^{-1}\mid h\in F\}\right)g^{-1}\\
\subseteq \bigcup\{hPh^{-1}\mid h\in F\cup gF\}
=X_C({F\cup gF})\\
\subseteq Y_C(F\cup gF)=Y_C(F).
\end{multline*} 
Thus $gY_C(F)g^{-1}\subseteq Y_C(F)$ and therefore
 $Y_C(F)$ is a normal subgroup.
It follows that
$\operatorname{Lie}(Y_C(F))$ is an  ideal in $\operatorname{Lie}(G)$,
see \cite[Proposition~5.54(i)]{HMCompact} or \cite[Corollary~11.1.3]{HilgertNeeb}.
Since the analytic Lie subgroup
$Y_C(F)$ is normal in $G$, its Lie algebra is invariant under the adjoint 
action of $G$.
\end{proof}

The following proposition clarifies the role 
of spacious paths for the Baire--Yamabe Process.
\begin{Prop}\label{BaireLemma0}
Let $G$ be a Lie group and $f:[0,1]\longrightarrow G$  a path. Put $C=f([0,1])$.
Then the following two conditions are equivalent:
\begin{enumerate}[\rm(1)]
\item  $C$ is spacious in $G$.
\item  There is a finite set $F\in\operatorname{Fin}(G)$ 
       such that  $Y_C(F)=G^\circ$.
\end{enumerate}
\end{Prop}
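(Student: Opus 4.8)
The plan is to prove the two implications separately. Throughout I will use two elementary facts about $C=f([0,1])$: since $C$ is the image of a path, $P=CC^{-1}$ is the image of $[0,1]^2$ under $(s,t)\mapsto f(s)f(t)^{-1}$, so $P$ is compact, symmetric, path connected, and contains $1$; consequently, for any $F\in\operatorname{Fin}(G)$, the set $X_C(F)=\bigcup_{g\in F}gPg^{-1}$ is a compact symmetric subset of $G^\circ$ containing $1$, so that every power $X_C(F)^n$ is compact and $Y_C(F)=\bigcup_{n\ge 0}X_C(F)^n\subseteq G^\circ$.

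For the implication $(2)\Rightarrow(1)$, assume $Y_C(F)=G^\circ$ for some $F\in\operatorname{Fin}(G)$. Then $G^\circ$ is the countable union of the compact (hence closed) sets $X_C(F)^n$, and since $G^\circ$ is locally compact it is a Baire space, so it is not meager in itself; therefore some $X_C(F)^n$ has nonempty interior. Writing $F=\{h_0,\dots,h_k\}$, the set $X_C(F)^n$ is the finite union, over all choices $h_{i_1},\dots,h_{i_n}\in F$, of the closed sets $h_{i_1}Ph_{i_1}^{-1}\cdots h_{i_n}Ph_{i_n}^{-1}$; since a finite union of closed nowhere dense sets is nowhere dense, at least one of these sets has nonempty interior. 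Rewriting such a set as $a_1Pa_2P\cdots a_nPa_{n+1}$ (with $a_1=h_{i_1}$, $a_j=h_{i_{j-1}}^{-1}h_{i_j}$ for $2\le j\le n$, and $a_{n+1}=h_{i_n}^{-1}$) and right-translating by $a_{n+1}^{-1}$ produces the set $(a_1CC^{-1})(a_2CC^{-1})\cdots(a_nCC^{-1})$, a product of finitely many left translates of $CC^{-1}$ with nonempty interior; hence $C$ is spacious.

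For the implication $(1)\Rightarrow(2)$, choose $F\in\operatorname{Fin}(G)$ so that $M:=Y_C(F)$ is a maximal member of the inductive poset $\operatorname{im}(Y_C)$, by Zorn's Lemma; then Proposition~\ref{BYP} shows that $M$ is a normal subgroup of $G$ containing $P$. Since $C$ is spacious there are $g_1,\dots,g_r\in G$ such that $D=g_1CC^{-1}g_2CC^{-1}\cdots g_rCC^{-1}=g_1Pg_2P\cdots g_rP$ is an identity neighborhood. The key computation is that, because $M$ is normal in $G$ and $P\subseteq M$, pushing each $P$-factor to the left by conjugation gives $g_1Pg_2P\cdots g_jP\subseteq M(g_1g_2\cdots g_j)$ for every $j$, and in particular $D\subseteq Mw$ with $w=g_1\cdots g_r$; since $1\in D$ this forces $w\in M$, whence $D\subseteq M$. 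A subgroup containing an identity neighborhood is open, so $M\supseteq G^\circ$, and together with $M\subseteq G^\circ$ this yields $Y_C(F)=M=G^\circ$.

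The bookkeeping about compactness, symmetry, and path-connectedness of $P$ and $X_C(F)$ is routine. I expect the main obstacle to be recognizing that the maximal member of $\operatorname{im}(Y_C)$ — rather than, say, the subgroup generated by the translating elements $g_i$ — is the object to work with in $(1)\Rightarrow(2)$: it is precisely the normality supplied by Proposition~\ref{BYP} that lets a single identity neighborhood $D$ be absorbed into one coset $Mw$, which is what forces $M$ to be open and hence equal to $G^\circ$. In the reverse direction the only subtlety is to keep all the sets closed (equivalently compact), so that the Baire-category conclusion ``non-meager'' upgrades to ``has nonempty interior.''
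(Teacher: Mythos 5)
Your proof is correct, and one half of it takes a genuinely different route. For (2)~$\Rightarrow$~(1) you argue essentially as the paper does: Baire's theorem on the countable union $G^\circ=\bigcup_n X_C(F)^{\cdot n}$ of compact sets, then the observation that one of the finitely many products $h_{i_1}Ph_{i_1}^{-1}\cdots h_{i_n}Ph_{i_n}^{-1}$ making up a power with interior must itself have interior, followed by the same regrouping and right-translation that exhibits a product of left translates of $CC^{-1}$ with nonempty interior. For (1)~$\Rightarrow$~(2), however, the paper builds an explicit finite set $F=\{1,h_1,\dots,h_r\}$ with $h_j=g_1\cdots g_j$ from the translating elements witnessing spaciousness and verifies directly that $D\subseteq DD^{-1}\subseteq X_C(F)^{\cdot 2r}\subseteq Y_C(F)$, so that $Y_C(F)$ is open, closed and path connected, hence equal to $G^\circ$; you instead take a maximal member $M=Y_C(F)$ of $\im(Y_C)$ via Zorn and use the normality of $M$ supplied by Proposition~\ref{BYP} to absorb the identity neighborhood $D=g_1Pg_2P\cdots g_rP$ into the single coset $Mg_1\cdots g_r$, whence $D\subseteq M$, $M$ is open, and $M=G^\circ$. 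Both arguments are valid, and yours is not circular since Proposition~\ref{BYP} is proved independently of this statement; your reliance on the normalization that $D$ may be taken to be an identity neighborhood is exactly the remark in~\ref{Spacious}. What each buys: the paper's version is more elementary and explicit, needing neither Zorn nor normality and showing that the $F$ extracted from the spaciousness data itself already satisfies $Y_C(F)=G^\circ$, whereas your absorption argument is slicker and costs little in context, since Proposition~\ref{BaireLemma} combines \ref{BYP} with the present proposition immediately afterwards anyway.
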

\begin{proof} 
We denote the $k$-fold product of a set $A\subseteq G$ by $A^{\cdot k}$.
Set  $P=CC^{-1}$ and assume~(1). Then
 there exist  elements $g_1,\ldots, g_r\in G$ such that
\[D=g_1Pg_2\cdots g_rP\] is an identity neighborhood.
Define elements $h_1,\ldots, h_r$  recursively so that $h_1=g_1$
and $h_j=h_{j-1}g_j$, for $j=2,\ldots, r$. 
Put $F=\{1,h_1,\dots,h_r\}$.  Then $F\in \operatorname{Fin}(G)$ and
by $P=P^{-1}$ and $(*)$ we have
\begin{align*}
D\subseteq DD^{-1}&=(h_1Ph_1^{-1})(h_2Ph_2^{-1})\cdots (h_rP)(Ph_r^{-1})\cdots(h_1Ph_1^{-1})\\
&=(h_1Ph_1^{-1})\cdots (h_rPh_r^{-1})(h_rPh_r^{-1})\cdots(h_1Ph_1^{-1})
\subseteq X_C(F)^{\cdot 2r}\subseteq Y_C(F).
\end{align*}
The subgroup $Y_C(F)$ is  path connected on the one hand, 
whence $Y_C(F)\subseteq G^\circ$,
and it is open since it contains the identity neighborhood $D$.
Therefore it is an open and closed subgroup, whence $G^\circ\subseteq Y_C(F)$.
This proves (2).

Now assume (2).  Then there exists $F\in\operatorname{Fin}(G)$
such that $G^\circ=Y_C(F)=\bigcup_{n=1}^\infty X_C(F)^{\cdot n}$. Each of the sets $X_C(F)^{\cdot n}$ is compact.
By  Baire's Category Theorem~\cite[XI.10.3]{Dug} there is an $n$ such that $X_C(F)^{\cdot n}$ has nonempty interior.
Now
\begin{align*}
X_C(F)^{\cdot n}&=\Big(\bigcup_{h_1\in F}h_1Ph_1^{-1}\Big)\cdots
                              \Big(\bigcup_{h_n\in F}h_nPh_n^{-1}\Big)\\
                              &=
   \bigcup\{h_1Ph_1^{-1}h_2Ph_2^{-1}\cdots h_nPh_n^{-1}\mid h_1,\ldots,h_n\in F\}.
  \end{align*}
The Baire Category Theorem, applied once more to this finite union of compact sets, 
shows that there exist elements $h_1,h_2,\dots,h_n\in F$
such that the set
\[E=h_1Ph_1^{-1}h_2Ph_2^{-1}\cdots h_nPh_n^{-1}\]
 has a nonempty interior. Setting 
$g_1=h_1$, $g_2=h_1^{-1}h_2, \dots, g_n=h_{n-1}^{-1}h_n$ in $G$, 
we see that $Eh_n=g_1Pg_2\cdots g_nP$ 
has  nonempty interior, whence $C$ is spacious in $G$ by definition.
\end{proof}

\begin{Prop}\label{BaireLemma}
Suppose that $G$ is a Lie group whose Lie algebra is simple, 
and that \[f:[0,1]\longrightarrow G\]
is a continuous nonconstant path. Then $C=f([0,1])$ is spacious in $G$.
\end{Prop}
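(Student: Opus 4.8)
The plan is to combine Proposition~\ref{BaireLemma0} with the normality statement of Proposition~\ref{BYP}, using the simplicity of $\operatorname{Lie}(G)$ to force the maximal member of $\im(Y_C)$ to be all of $G^\circ$. By Proposition~\ref{BaireLemma0}, it suffices to produce a finite set $F\in\operatorname{Fin}(G)$ with $Y_C(F)=G^\circ$. First I would invoke Zorn's Lemma (the poset $\im(Y_C)\subseteq\operatorname{PCS}(G)$ is inductive, since $\operatorname{PCS}(G)$ is inductive via the Yamabe correspondence with the finite-dimensional poset of Lie subalgebras) to choose a maximal member $Y_C(F)$ of $\im(Y_C)$. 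By Proposition~\ref{BYP}, $Y_C(F)$ is a normal subgroup of $G$ containing $P=CC^{-1}$, and its Lie algebra $\operatorname{Lie}(Y_C(F))$ is a $G$-invariant ideal in $\operatorname{Lie}(G)$.

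Now I would use simplicity. Since $\operatorname{Lie}(G)$ is simple, the ideal $\operatorname{Lie}(Y_C(F))$ is either $0$ or all of $\operatorname{Lie}(G)$. The key point is to rule out the first possibility: because $f$ is a nonconstant continuous path, $C$ contains a nonconstant path, so (as recorded in the setup of the Baire--Yamabe Process) the path component $P$ of the identity in $CC^{-1}$ contains a nonconstant path. A path connected group containing a nonconstant path has positive dimension, since a zero-dimensional Lie group is discrete and hence admits no nonconstant path; therefore $\operatorname{Lie}(P)\neq 0$, and since $P\subseteq Y_C(F)$ we get $\operatorname{Lie}(Y_C(F))\neq 0$. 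Thus $\operatorname{Lie}(Y_C(F))=\operatorname{Lie}(G)$.

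Finally, an analytic Lie subgroup whose Lie algebra is all of $\operatorname{Lie}(G)$ is open in $G$ (it contains a neighborhood of the identity, being the integral subgroup for the full Lie algebra), hence closed, hence contains $G^\circ$; and being path connected it is contained in $G^\circ$. So $Y_C(F)=G^\circ$, and Proposition~\ref{BaireLemma0} gives that $C$ is spacious in $G$.

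The main obstacle is the elementary but essential observation that $\operatorname{Lie}(P)\neq 0$, i.e.\ that a path connected subgroup containing a nonconstant path cannot be zero-dimensional. Everything else is bookkeeping with the poset machinery already set up; the simplicity hypothesis is used only at the one step where the $G$-invariant ideal is forced to be everything, and it is precisely here that the argument would break without simplicity (a nonzero proper ideal could absorb $P$). One should also double-check that the maximal element furnished by Zorn's Lemma is indeed of the form $Y_C(F)$ for an honest $F\in\operatorname{Fin}(G)$ rather than merely a union along a chain --- but since $\im(Y_C)$ is the image of a poset map into an inductive poset whose chains are finite (being in bijection with chains of subspaces of the finite-dimensional $\operatorname{Lie}(G)$), any chain in $\im(Y_C)$ stabilizes, so its upper bound lies in $\im(Y_C)$, and maximality is attained at an actual value $Y_C(F)$.
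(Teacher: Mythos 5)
Your proposal is correct and follows essentially the same route as the paper: take a maximal element $Y_C(F)$ of the Baire--Yamabe process, use Proposition~\ref{BYP} to see that $\operatorname{Lie}(Y_C(F))$ is an ideal which is nonzero since $P\neq\{1\}$, conclude by simplicity that $Y_C(F)=G^\circ$, and finish with Proposition~\ref{BaireLemma0}. The only cosmetic blemish is the phrase ``$\operatorname{Lie}(P)\neq 0$'' --- $P=CC^{-1}$ is not a subgroup, so the argument should be phrased for the path connected subgroup $Y_C(F)\supseteq P$ (which, having trivial Lie algebra, would be $\{1\}$ and could not contain a nonconstant path), exactly as your subsequent sentence in effect does.
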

\begin{proof}
Put $P=CC^{-1}$ and
let $Y_C(F)$ be a maximal element resulting from the Baire--Yamabe process.
The Lie algebra $\operatorname{Lie}(G)$ is simple and $\{1\}\neq P$, whence $Y_C(F)\neq\{1\}$.
By Proposition~\ref{BYP}, the Lie algebra $\operatorname{Lie}(Y_C(F))$ is a nonzero
ideal in $\operatorname{Lie}(G)$. Hence it equals $\operatorname{Lie}(G)$, and thus $Y_C(F)=G^\circ$.
The claim follows now from Proposition~\ref{BaireLemma0}.
\end{proof}
Now we show how to shrink compact identity neighborhoods in perfect Lie groups in
an algebraic way, using commutators. This method is basically
due to van der Waerden~\cite{vdW}. The present approach follows 
closely \cite[5.59]{HMCompact}, but
avoids the BCH multiplication in Banach algebras.

We start with a lemma about Lie algebras.
\begin{Lem}\label{AnotherLemma}
Let $Z_1=[X_1,Y_1],\ldots,Z_n=[X_n,Y_n]$ be $n$ linearly 
independent commutators in a finite dimensional
real Lie algebra $\mathfrak g$. Then there exists a real number 
$r>0$ such that for all real numbers $t_1,\ldots,t_n$ with with $0<|t_1|,\ldots,|t_n|\leq r$,
the $n$ vectors
\[\widetilde Z_i(t)=\exp(\operatorname{ad}(t_iX_i))Y_i-Y_i\]
are linearly independent.
\end{Lem}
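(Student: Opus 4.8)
The plan is to reduce the statement to a calculation with a single commutator and then use analyticity together with linear independence of the derivatives at $t=0$. First I would fix $i$ and study the smooth (in fact real-analytic) map $\gamma_i:\RR\longrightarrow\mathfrak g$ defined by $\gamma_i(s)=\exp(\operatorname{ad}(sX_i))Y_i-Y_i$. Differentiating at $s=0$ gives $\gamma_i'(0)=\operatorname{ad}(X_i)Y_i=[X_i,Y_i]=Z_i$, since the zeroth-order term $Y_i$ is killed by the subtraction and the first-order term of $\exp(\operatorname{ad}(sX_i))$ applied to $Y_i$ is exactly $s\,[X_i,Y_i]$. Thus $\gamma_i(s)=sZ_i+O(s^2)$, i.e. $\gamma_i(s)=s\bigl(Z_i+s\,\rho_i(s)\bigr)$ for some real-analytic $\rho_i:\RR\longrightarrow\mathfrak g$. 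Note $\widetilde Z_i(t)=\gamma_i(t_i)$, so for $t_i\neq 0$ the vector $\widetilde Z_i(t)$ is a nonzero scalar multiple of $Z_i+t_i\rho_i(t_i)$, and linear independence of the $\widetilde Z_i(t)$ is equivalent to linear independence of the $n$ vectors $W_i(t_i):=Z_i+t_i\rho_i(t_i)$.

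Next I would package linear independence as the nonvanishing of a determinant. Since $Z_1,\dots,Z_n$ are linearly independent in the $d$-dimensional space $\mathfrak g$, extend them to a basis $Z_1,\dots,Z_n,Z_{n+1},\dots,Z_d$ and let $\lambda_1,\dots,\lambda_d$ be the dual coordinates. Form the $n\times n$ matrix $A(t_1,\dots,t_n)$ with entries $\lambda_j\bigl(W_i(t_i)\bigr)$ for $1\le i,j\le n$; each entry is a real-analytic function of the single variable $t_i$, hence $\det A$ is a real-analytic function of $(t_1,\dots,t_n)$ on a neighborhood of the origin. At the origin $W_i(0)=Z_i$, so $A(0)=I_n$ and $\det A(0)=1\neq 0$. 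By continuity there is $r>0$ with $\det A(t)\neq 0$ whenever $|t_i|\le r$ for all $i$; for such $t$ with all $t_i\neq 0$ the vectors $W_i(t_i)$, and hence the $\widetilde Z_i(t)$, are linearly independent. One small point to check is that $\det A(t)\neq 0$ forces the $W_i(t_i)$ to be independent even though $A$ only records $n$ of the $d$ coordinates: this is immediate, since a nontrivial linear relation among the $W_i$ would project to a nontrivial linear relation among the rows of $A$.

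The only genuinely delicate point is making sure the expansion $\gamma_i(s)=sZ_i+O(s^2)$ is uniform enough to extract a single $r$ that works simultaneously for all $i$; this is automatic because there are only finitely many indices, so I would simply take $r$ to be the minimum of the finitely many radii produced for the individual determinant arguments, or equivalently run the determinant argument once with the full matrix $A(t_1,\dots,t_n)$ as above, which already handles all indices at once. I do not expect any real obstacle here: the statement is essentially the observation that a smooth family of vectors agreeing to first order with a linearly independent tuple remains linearly independent for small nonzero parameters, and the real-analyticity of $\exp\circ\operatorname{ad}$ is only invoked for cleanliness — continuity and a first-order Taylor expansion suffice. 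I would present the proof in the order: (i) compute $\gamma_i'(0)=Z_i$; (ii) rewrite $\widetilde Z_i(t)$ as a scalar times $Z_i+t_i\rho_i(t_i)$; (iii) form the determinant $\det A(t)$, observe $A(0)=I_n$; (iv) invoke continuity to produce $r$.
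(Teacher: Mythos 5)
Your proof is correct and follows essentially the same route as the paper: factor $\widetilde Z_i(t)=t_i\bigl(Z_i+t_i\rho_i(t_i)\bigr)$ via the first-order term of $\exp(\operatorname{ad}(t_iX_i))$, and then use continuity at the origin of a scalar function detecting linear independence. The paper tests independence by the nonvanishing of the wedge $Z_1(t_1)\wedge\cdots\wedge Z_n(t_n)$ in $\Lambda^n\mathfrak g$ rather than your coordinate determinant $\det A(t)$, but this is only a cosmetic difference.
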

\begin{proof}
In the $n$th exterior power of the Lie algebra $\mathfrak{g}$ we have 
$Z_1\wedge\cdots\wedge Z_n\neq 0$.
From the series expansion $\exp(\operatorname{ad}(tX))=\sum_{k=0}^\infty\frac1{k!}(\operatorname{ad}(tX))^k$
we see that \[\frac1t\bigl(\exp(\operatorname{ad}(tX))Y-Y\bigr)=[X,Y]+tF(t,X,Y),\]
for some continuous function $F:\mathbb R\times\mathfrak g\times\mathfrak g\longrightarrow\mathfrak g$.
Put $Z_k(t)=\frac1t\bigl(\exp(\operatorname{ad}(tX_k))Y_k-Y_k\bigr)$, with $Z_k(0)=Z_k$.
Thus $Z_k(t)$ depends continuously on $t$.
By the continuity of the map $(t_1,\ldots,t_n)\longmapsto Z_1(t_1)\wedge\cdots\wedge Z_n(t_n)$
at $(0,\ldots,0)$, 
there exists a constant $r>0$ such that $Z_1(t_1)\wedge\cdots\wedge Z_n(t_n)\neq 0$
for all $|t_1|,\ldots,|t_n|\leq r$. Hence 
\[\widetilde Z_1(t_1)\wedge\cdots\wedge \widetilde Z_n(t_n)=t_1\cdots t_nZ_1(t_1)\wedge\cdots\wedge Z_n(t_n)\neq 0,\]  provided that 
$t_1\cdots t_n\neq 0$ and $|t_1|,\ldots,|t_n|\leq r$.
\end{proof}
The next fact we need from point-set topology is well-known and follows from the Tube Lemma
\cite[XI.2.6]{Dug}.
\begin{Lem}[Wallace]\label{Wallace}
Let $X,Y,Z$ be spaces and let $\phi: X\times Y\longrightarrow Z$ 
be a continuous map.
Let $A$ and $B$ be compact subsets of $X$ and $Y$, respectively.
Suppose that $W$ is an open set of $Z$ containing $\phi(A\times B)$.
Then there exist  neighborhoods $U$ of $A$ in $X$ and $V$ of $B$ in $Y$
such that  $\phi(U\times V)\subseteq W$.
\end{Lem}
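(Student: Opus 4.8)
Let $X,Y,Z$ be spaces, $\phi:X\times Y\to Z$ continuous, $A\subseteq X$ and $B\subseteq Y$ compact, and $W\subseteq Z$ open with $\phi(A\times B)\subseteq W$. Then there are neighborhoods $U$ of $A$ and $V$ of $B$ with $\phi(U\times V)\subseteq W$.

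The plan is to strip off the map $\phi$ at once, reducing to a purely topological statement about open neighbourhoods of a product of two compact sets, and then to prove that statement by running the Tube Lemma argument in both coordinates simultaneously.

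First I would put $\Omega=\phi^{-1}(W)$. Since $\phi$ is continuous, $\Omega$ is open in $X\times Y$, and by hypothesis $A\times B\subseteq\Omega$. It therefore suffices to find open sets $U\subseteq X$ and $V\subseteq Y$ with $A\subseteq U$, $B\subseteq V$ and $U\times V\subseteq\Omega$, since then $\phi(U\times V)\subseteq\phi(\Omega)\subseteq W$. To produce $U$ and $V$ I would argue in two stages. For every pair $(a,b)\in A\times B$, openness of $\Omega$ yields basic open boxes $U_{a,b}\ni a$ in $X$ and $V_{a,b}\ni b$ in $Y$ with $U_{a,b}\times V_{a,b}\subseteq\Omega$. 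Fix $a\in A$. The sets $V_{a,b}$, for $b\in B$, cover the compact set $B$, so finitely many of them, say $V_{a,b_1},\dots,V_{a,b_m}$, already cover $B$; I then set $V_a=V_{a,b_1}\cup\dots\cup V_{a,b_m}$ and $U_a=U_{a,b_1}\cap\dots\cap U_{a,b_m}$. Here $U_a$ is open (a \emph{finite} intersection), contains $a$, and $U_a\times V_a\subseteq\Omega$, because any point of $U_a\times V_a$ lies in one of the boxes $U_{a,b_j}\times V_{a,b_j}$. Now the open sets $U_a$, for $a\in A$, cover the compact set $A$; choosing a finite subcover $U_{a_1},\dots,U_{a_k}$ I set $U=U_{a_1}\cup\dots\cup U_{a_k}$ and $V=V_{a_1}\cap\dots\cap V_{a_k}$. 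Then $U$ and $V$ are open, $A\subseteq U$, and $B\subseteq V$ since every $V_{a_i}$ was chosen to contain all of $B$; and $U\times V\subseteq\Omega$ by the same box argument as before. This $U$ and $V$ are as required.

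The step to watch — and the reason one cannot simply shrink around $A$ and around $B$ independently — is the coordination between the two stages: after choosing the finite subcover $U_{a_1},\dots,U_{a_k}$ of $A$ one must take $V$ to be the \emph{finite} intersection $V_{a_1}\cap\dots\cap V_{a_k}$, and it is precisely the finiteness of this intersection, together with the fact that each $V_{a_i}$ already contains $B$, that keeps $V$ open while remaining a neighbourhood of $B$ compatible with the whole of $U$. Apart from this bookkeeping the argument is just the classical Tube Lemma applied twice; no further input is needed.
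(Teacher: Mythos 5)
Your proof is correct and is exactly the standard double-compactness argument that the paper itself does not spell out: the paper only remarks that the lemma "follows from the Tube Lemma" and cites Dugundji, and your reduction to $\Omega=\phi^{-1}(W)$ followed by the two-stage finite-subcover argument is precisely that Tube Lemma reasoning applied in both coordinates. Nothing is missing.
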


We obtain the following variation of van der Waerden's Theorem \cite{vdW}.
\begin{Thm}[{\cite[Proposition~5.59]{HMCompact}}]\label{vdW2}
Let $G$ be an $n$-dimensional Lie
group whose Lie algebra is perfect, 
i.e. $\operatorname{Lie}(G)$ is spanned by
commutators. Then there exist $1$-parameter groups $c_1,\ldots,c_n$ in $G$ such that
that for all $t_1,\ldots,t_n$ with $0<|t_i|\leq1$ and every
identity neighborhood $U\subseteq G$,
the set 
\[
 M_{t_1,\ldots,t_n}=[c_1(t_1),U]\cdots[c_n(t_n),U]\subseteq G
\]
is an identity neighborhood.
If $U$ is compact and if $W\subseteq G$ is any identity neighborhood, 
then the $t_i>0$ can be chosen in such a way
that $M_{t_1,\ldots,t_n}\subseteq W$.
\end{Thm}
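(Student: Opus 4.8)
The plan is to take the $c_i$ to be one-parameter subgroups generated by the ``$X$-parts'' of a well-chosen basis of $\operatorname{Lie}(G)$ by commutators, and to exhibit $M_{t_1,\ldots,t_n}$ as containing the image of an explicit smooth map whose differential at the origin is invertible. Concretely, since $\operatorname{Lie}(G)$ is perfect and $n$-dimensional, I would first pick a basis $Z_1=[X_1,Y_1],\ldots,Z_n=[X_n,Y_n]$ of $\operatorname{Lie}(G)$ consisting of commutators, apply Lemma~\ref{AnotherLemma} to it to obtain the associated constant $r>0$, and set $c_i(t)=\exp(trX_i)$. The point of rescaling by $r$ is to arrange that the vectors $W_i(t):=\exp(\operatorname{ad}(trX_i))Y_i-Y_i$ are linearly independent for \emph{all} $t$ with $0<|t|\le1$, and not merely for $t$ in some a priori small interval.

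Next I would record the elementary identity $[c_i(t),\exp(sY_i)]=\exp\bigl(s\operatorname{Ad}(\exp(trX_i))Y_i\bigr)\exp(-sY_i)$, which, upon differentiating in $s$ at $s=0$ and using that the differential of group multiplication at $(1,1)$ is addition in $\operatorname{Lie}(G)$, yields $\tfrac{d}{ds}\big|_{0}[c_i(t),\exp(sY_i)]=W_i(t)$. Then, fixing admissible $t_1,\ldots,t_n$, I would consider the smooth map $\Phi(s_1,\ldots,s_n)=[c_1(t_1),\exp(s_1Y_1)]\cdots[c_n(t_n),\exp(s_nY_n)]$. Since the factors other than the $i$-th are trivial along the $i$-th coordinate axis, $\Phi(0)=1$ and $d\Phi_0$ carries the standard basis to $W_1(t_1),\ldots,W_n(t_n)$, hence is a linear isomorphism. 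By the inverse function theorem $\Phi$ is a local diffeomorphism at $0$; shrinking the domain to an open neighborhood $V_0$ of $0$ on which $\Phi$ is an open embedding and, additionally, $\exp(s_iY_i)\in U$, the open set $\Phi(V_0)$ is an identity neighborhood contained in $[c_1(t_1),U]\cdots[c_n(t_n),U]=M_{t_1,\ldots,t_n}$, which is therefore an identity neighborhood.

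For the final clause, with $U$ compact and $W$ an identity neighborhood (which I may assume open), I would apply the Wallace Lemma~\ref{Wallace} to the continuous map $\psi\bigl((t_1,\ldots,t_n),(u_1,\ldots,u_n)\bigr)=[c_1(t_1),u_1]\cdots[c_n(t_n),u_n]$ on $\mathbb R^n\times U^n$, with the compact sets $\{0\}$ and $U^n$ and the observation that $\psi(\{0\}\times U^n)=\{1\}\subseteq W$ because $c_i(0)=1$. This produces a neighborhood $\mathcal U$ of $0$ with $\psi(\mathcal U\times U^n)\subseteq W$, and any choice of $t_i\in\mathcal U$ with $0<t_i\le1$ then forces $M_{t_1,\ldots,t_n}\subseteq W$, while the previous step guarantees it is still an identity neighborhood.

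The main obstacle is the first assertion, where I need $d\Phi_0$ to be invertible \emph{uniformly} over all admissible $t_i$ rather than only for $t_i$ near $0$; this uniformity is precisely the content of Lemma~\ref{AnotherLemma} together with the rescaling of the $X_i$ by $r$, so once that lemma is in hand the remaining ingredients (the derivative computation, the inverse function theorem, and the Wallace Lemma) are routine.
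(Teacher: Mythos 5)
Your proposal is correct and follows essentially the same route as the paper's proof: the same choice of a commutator basis, the same use of Lemma~\ref{AnotherLemma} with the rescaling $c_i(t)=\exp(rtX_i)$ to get uniform linear independence for $0<|t_i|\le 1$, the same inverse-function-theorem argument for the product of commutator maps, and the same application of Wallace's Lemma~\ref{Wallace} for the shrinking statement. Your write-up merely makes explicit a few details (the diagonal computation of $d\Phi_0$ and the precise Wallace setup with $A=\{0\}$, $B=U^n$) that the paper leaves implicit.
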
  
\begin{proof}
Let $Z_1=[X_1,Y_1], \ldots, Z_n=[X_n,Y_n]$ be a basis 
of the Lie algebra  $\operatorname{Lie}(G)$.
By Lemma~\ref{AnotherLemma} there exists a number $r>0$ such that
for all real numbers $t_1,\ldots,t_n$ with $0<|t_i|\leq r$ the $n$ vectors
$\widetilde Z_{i}(t)=\exp(\operatorname{ad}(t_iX_i))Y_i-Y_i$ form a basis of $\operatorname{Lie}(G)$.
Put $c_i(t)=\exp(rtX_i)$ and $y_i(t)=\exp(rtY_i)$.

Suppose that $0<|t_i|\leq 1$.
Differentiating the smooth function $s\longmapsto [c_i(t_i),y_i(s)]$ at the time $s=0$,
we obtain the vector $\widetilde Z_i(t_i)=\exp(\operatorname{ad}(t_iX_i))Y_i-Y_i$.
From the inverse function theorem we conclude that near $(0,\ldots,0)$ the map
\[(s_1,\ldots,s_n)\longmapsto [c_1(t_1),y_1(s_1)]\cdots[c_n(t_n),y_n(s_n)]\] is a
diffeomorphism from $\mathbb R^n$ to $G$. 
Hence $M=[c_1(t_1),U]\cdots[c_n(t_n),U]$ is an identity neighborhood.
If $U$ is compact, then by Wallace's Lemma \ref{Wallace} we may
choose the $t_i$ in such a way that $M\subseteq W$.
\end{proof}
\subsection{Remark}
We note the following consequence of Theorem~\ref{vdW2}. 
In an $n$-di\-men\-sio\-nal Lie group $G$ whose Lie algebra is perfect,
every element near the identity is a product of at most $n$ commutators of elements
coming from a small identity neighborhood. In particular, the abstract
commutator subgroup of $G$ is open in $G$ 
(this follows also from Yamabe's Theorem~\ref{YamabesTheorem}).

\section{Rigidity of semisimple Lie groups}

The following result is essential 
for almost all our rigidity results concerning Lie groups.
\label{SemisimpleSection}
\begin{Thm}
\label{MainThm}
Let $\mathbfcal K$ be an almost Polish class and
suppose that $G$ is a Lie group in $\mathbfcal K$ whose Lie algebra is perfect.
Suppose also that 
\[
 1\longrightarrow N \xhookrightarrow{\ \ \ } K\xrightarrow{\ \phi\ }G\longrightarrow 1
\]
is a short exact sequence, for some abstract homomorphism $\phi$,
and that $(K,N)\in\mathbfcal K_a$.
If there exists a compact spacious subset $C\subseteq G$ whose preimage $\phi^{-1}(C)$ is 
$\mathbfcal K$-analytic, then  $\phi$ is continuous and open.
\end{Thm}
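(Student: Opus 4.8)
The plan is to reduce the theorem to Theorem~\ref{ContinuousOpenTheorem} by exhibiting a neighborhood basis of the identity in $G$ whose preimages under $\phi$ are $\mathbfcal K$-analytic. First I would observe that since $C$ is compact and spacious, Proposition~\ref{BaireLemma0} (or the definition of spaciousness directly) furnishes elements $g_1,\ldots,g_r\in G$ so that $D=g_1CC^{-1}g_2CC^{-1}\cdots g_rCC^{-1}$ is a compact identity neighborhood of $G$. I would like to claim that $\phi^{-1}(D)$ is $\mathbfcal K$-analytic: writing $H=K/N$ and $\pi:K\to H$ for the quotient map, the preimage $\phi^{-1}(C)$ corresponds via the canonical identification $H\cong G$ to a subset of $H$ whose $\pi$-preimage is $\mathbfcal K$-analytic, and then repeated application of Lemma~\ref{LittleLemma}(1) (products) together with the fact that inverses of $\mathbfcal K$-analytic subsets of the group $K$ are again $\mathbfcal K$-analytic, and that left translates by fixed elements preserve $\mathbfcal K$-analyticity, shows that $\phi^{-1}(D)$ is $\mathbfcal K$-analytic. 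More importantly, the preimage $\phi^{-1}(D)$ contains the preimage of a small compact identity neighborhood; but what we actually need is a whole neighborhood \emph{basis}.

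The second step is the van der Waerden shrinking. Since $\operatorname{Lie}(G)$ is perfect and $G$ is an $n$-dimensional Lie group (here $n=\dim G$; note $G\in\mathbfcal K$ forces $G$ to be $\sigma$-compact, hence $G^\circ$ has countable index, so $n$ is finite and well-defined on the identity component), Theorem~\ref{vdW2} gives $1$-parameter subgroups $c_1,\ldots,c_n$ in $G$ such that for every choice of $0<t_i\le 1$ the set
\[
M_{t_1,\ldots,t_n}=[c_1(t_1),D]\cdots[c_n(t_n),D]
\]
is an identity neighborhood, and — applying the last clause of Theorem~\ref{vdW2} with $U=D$ compact — for any prescribed identity neighborhood $W$ the $t_i$ may be chosen so that $M_{t_1,\ldots,t_n}\subseteq W$. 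Thus the family $\{M_{t_1,\ldots,t_n}\}$ is a neighborhood basis of the identity in $G$.

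The third step is to check that each $\phi^{-1}(M_{t_1,\ldots,t_n})$ is $\mathbfcal K$-analytic. Each $[c_i(t_i),D]=\{c_i(t_i)d c_i(t_i)^{-1}d^{-1}\mid d\in D\}$; since $D$ is $\mathbfcal K$-analytic in $G$ and $\phi^{-1}(D)$ is $\mathbfcal K$-analytic in $K$, I would pick an element $\tilde c_i\in K$ with $\phi(\tilde c_i)=c_i(t_i)$ and note that $\phi^{-1}([c_i(t_i),D])=\{\tilde c_i g\tilde c_i^{-1}g^{-1}n\mid g\in\phi^{-1}(D),\ n\in N\}$, which is a product of finitely many $\mathbfcal K$-analytic subsets of $K$ (translates of $\phi^{-1}(D)$, its inverse, and $N$ itself), hence $\mathbfcal K$-analytic; Lemma~\ref{LittleLemma}(2) packages exactly this commutator-preimage fact. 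Then $\phi^{-1}(M_{t_1,\ldots,t_n})$ is a product of $n$ such $\mathbfcal K$-analytic sets, hence $\mathbfcal K$-analytic. Since $\mathbfcal K$-analytic sets are almost open, Theorem~\ref{ContinuousOpenTheorem} applies and yields that $\phi$ is continuous; surjectivity of $\phi$ (given by the short exact sequence) then gives openness by the second clause of that theorem.

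The main obstacle I anticipate is purely bookkeeping: making sure that passing between $G$, $H=K/N$, and $K$ via a set-theoretic cross section $s$ is done cleanly, so that all the `algebraic' manipulations (products, inverses, commutators, translates) genuinely land inside the class of $\mathbfcal K$-analytic subsets of $K$ — this is precisely what Lemma~\ref{LittleLemma} and the closure of $\mathbfcal K$-analytic sets under products are designed to handle, so no new idea is needed, but one must be careful that the element $c_i(t_i)$ used in the commutator admits a chosen lift in $K$ and that the fixed translating elements $g_j$ in $D$ likewise lift, which they do since $\phi$ is surjective. A secondary point worth stating explicitly is that finiteness of $n=\dim G^\circ$ is needed for Theorem~\ref{vdW2}; this is automatic because membership of $G$ in an almost Polish class forces $G$ to be Lindel\"of, hence $\sigma$-compact as a Lie group, hence second countable with open identity component of finite dimension.
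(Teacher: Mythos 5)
Your proposal is correct and follows essentially the same route as the paper: use spaciousness to produce the compact identity neighborhood $D=g_1CC^{-1}\cdots g_rCC^{-1}$, deduce that $\phi^{-1}(D)$ is $\mathbfcal K$-analytic by iterating Lemma~\ref{LittleLemma}(1), shrink $D$ inside an arbitrary identity neighborhood via Theorem~\ref{vdW2} using perfection of $\operatorname{Lie}(G)$, show the preimages of the resulting commutator sets are $\mathbfcal K$-analytic via Lemma~\ref{LittleLemma}(2) and (1), and conclude with Theorem~\ref{ContinuousOpenTheorem}. Your extra remarks (lifting $c_i(t_i)$ to $K$, finiteness of $\dim G$) are harmless bookkeeping that the paper leaves implicit, so there is nothing substantive to add.
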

\begin{proof}
By our assumptions on $C$ we find elements $g_1,\ldots,g_r\in G$ such that 
the compact set $D=g_1CC^{-1}g_2CC^{-1}\cdots g_{r}CC^{-1}$
is a compact identity neighborhood. An iterated application of 
 Lemma \ref{LittleLemma}(1) shows that its preimage
$\phi^{-1}(D)$ is $\mathbfcal K$-analytic.
Let $W\subseteq G$ be an arbitrary identity neighborhood. By Theorem~\ref{vdW2}
we can find elements $a_1,\ldots,a_n\in G$ such that $M=[a_1,D]\cdots [a_n,D]\subseteq W$
is an identity neighborhood. Now we apply  Lemma \ref{LittleLemma}(2) 
and (1) and conclude that  its preimage $\phi^{-1}(M)$ is $\mathbfcal K$-analytic.
By Theorem \ref{ContinuousOpenTheorem}, the homomorphism $\phi$ is continuous and open.
\end{proof}
The following immediate consequence will be generalized below.
Under the additional assumption that $\phi$ is an isomorphism, $G$ is connected and that 
$\mathbfcal K$ is $\mathbfcal L^\sigma$ or $\mathbfcal P$,
such a result is proved in \cite{Pejic,Ka74,Kal82}, respectively.
\begin{Cor}
Let $G$ be a compact Lie group whose Lie algebra is semisimple.
Then $G$ is rigid within every almost Polish class $\mathbfcal K$ that contains $G$.
\end{Cor}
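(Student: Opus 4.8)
The plan is to reduce the statement directly to Theorem~\ref{MainThm}. The first observation is purely algebraic: a semisimple real Lie algebra $\mathfrak g$ satisfies $[\mathfrak g,\mathfrak g]=\mathfrak g$, so $\operatorname{Lie}(G)$ is perfect and Theorem~\ref{MainThm} is applicable to $G$. Compactness of $G$ is not yet used here; note also that connectedness of $G$ is not needed.

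The second and decisive point is that compactness supplies a trivial compact spacious set whose preimage is $\mathbfcal K$-analytic: simply take $C=G$. Then $C$ is compact, and $CC^{-1}=G$ is itself an identity neighborhood (it is the whole group, hence open), so $C$ is spacious in $G$ in the sense of~\ref{Spacious} --- indeed already a single translate of $CC^{-1}$ works, with $r=1$ and $g_1=1$. Furthermore $\phi^{-1}(C)=\phi^{-1}(G)=K$, and $K$ is $\mathbfcal K$-analytic in itself because $\id_K\colon K\to K$ is a continuous map from a member of $\mathbfcal K$ onto $K$. Thus, given any short exact sequence $1\to N\xhookrightarrow{\ \ \ }K\xrightarrow{\ \phi\ }G\to1$ of the type occurring in the definition of $\mathbfcal K$-rigidity --- so $G,K\in\mathbfcal K$ and $(K,N)\in\mathbfcal K_a$ --- all hypotheses of Theorem~\ref{MainThm} are satisfied, and the theorem yields that $\phi$ is continuous and open.

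There is essentially no obstacle to overcome in the corollary itself; all the work has been done in Theorem~\ref{MainThm}, and behind it in van der Waerden's shrinking construction (Theorem~\ref{vdW2}, applied with the fixed compact identity neighborhood $U=D=G$) and in the Pettis-type continuity and open mapping theorem (Theorem~\ref{ContinuousOpenTheorem}). If one prefers an argument not citing Theorem~\ref{MainThm} verbatim, one can instead run Theorem~\ref{vdW2} directly with $D=G$ to obtain $1$-parameter groups $c_1,\dots,c_n$ such that the sets $M_{t_1,\dots,t_n}=[c_1(t_1),G]\cdots[c_n(t_n),G]$ form a neighborhood basis of the identity, use Lemma~\ref{LittleLemma}(2) and~(1) to see that each preimage $\phi^{-1}(M_{t_1,\dots,t_n})$ is $\mathbfcal K$-analytic, and then invoke Theorem~\ref{ContinuousOpenTheorem}; but routing everything through Theorem~\ref{MainThm} is the cleanest path.
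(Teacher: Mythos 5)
Your proposal is correct and is essentially the paper's own argument: the paper proves this corollary by applying Theorem~\ref{MainThm} to the spacious set $C=G$, exactly as you do (with the same observations that semisimple implies perfect, $G$ is compact and spacious, and $\phi^{-1}(G)=K$ is $\mathbfcal K$-analytic). Your alternative route through Theorem~\ref{vdW2} with $D=G$ is just the unwinding of Theorem~\ref{MainThm} in this special case, matching the outline given in the paper's introduction.
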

\begin{proof}
We apply Theorem~\ref{MainThm} to the spacious set $C=G$.
\end{proof}
The following well-known fact about Lie groups will be used several times.
\begin{Lem}\label{tfg}
Let $G$ be a Lie group. If $G/G^\circ$ is finitely generated, then 
$G$ has a finitely generated dense subgroup.
\end{Lem}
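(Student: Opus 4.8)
The statement is: if $G$ is a Lie group with $G/G^\circ$ finitely generated, then $G$ has a finitely generated dense subgroup. The plan is to treat the connected part and the component group separately, and then combine. First I would recall that a connected Lie group $G^\circ$ always has a finitely generated dense subgroup: indeed, $G^\circ$ is generated topologically by $\exp(\operatorname{Lie}(G))$, and since $\operatorname{Lie}(G)$ is finite-dimensional one can pick a basis $X_1,\ldots,X_m$ and irrational-type parameters $t_1,\ldots,t_m$ so that the $m$ one-parameter subgroups $t\mapsto\exp(tX_i)$ have generators $\exp(t_iX_i)$ which together generate a dense subgroup of $G^\circ$; in fact even more simply, a connected Lie group is generated by a connected identity neighborhood $U$, and a suitable countable-to-finite density argument works. (Concretely: pick any connected symmetric identity neighborhood $V$ with compact closure; the countable union $\bigcup_n V^{\cdot n}=G^\circ$, and a standard argument gives finitely many elements whose generated subgroup meets every $V^{\cdot n}$ densely. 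Even more cleanly, one may invoke that every connected Lie group has a dense subgroup generated by $\dim G+1$ elements, a classical fact.) Let $S_0\subseteq G^\circ$ be such a finite set with $\overline{\langle S_0\rangle}=G^\circ$.

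Next I would lift generators of the component group. Since $G^\circ$ is open in $G$ (a standing fact recalled in \ref{LieGroups}), the quotient $G/G^\circ$ is a discrete group, and by hypothesis it is finitely generated, say by the images of elements $a_1,\ldots,a_k\in G$. Set $S=S_0\cup\{a_1,\ldots,a_k\}$, a finite subset of $G$, and let $\Gamma=\langle S\rangle$ be the abstract subgroup it generates. I claim $\overline\Gamma=G$. Since the $a_j$ map onto a generating set of $G/G^\circ$, the subgroup $\Gamma$ surjects onto $G/G^\circ$; hence $\Gamma\cdot G^\circ=G$, so every coset of $G^\circ$ in $G$ meets $\Gamma$. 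Fix $g\in G$ and write $g=\gamma h$ with $\gamma\in\Gamma$ and $h\in G^\circ$. Now $\Gamma\cap G^\circ\supseteq\langle S_0\rangle$, which is dense in $G^\circ$; so there is a net (or sequence, $G^\circ$ being metrizable as a connected Lie group) $x_i\in\langle S_0\rangle\subseteq\Gamma$ with $x_i\to h$ in $G^\circ$, hence in $G$. Then $\gamma x_i\in\Gamma$ and $\gamma x_i\to\gamma h=g$ by continuity of left multiplication by $\gamma$. Thus $g\in\overline\Gamma$, and since $g$ was arbitrary, $\overline\Gamma=G$.

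\textbf{Main obstacle.} The only nontrivial input is the existence of a finitely generated dense subgroup of the \emph{connected} Lie group $G^\circ$; everything else is soft coset bookkeeping using that $G^\circ$ is open. For $G^\circ$ this is classical (it goes back to Auerbach for compact connected Lie groups, and the general connected case follows because $G^\circ$ is generated by a relatively compact connected identity neighborhood $V$: the subgroup $\langle V\rangle=G^\circ=\bigcup_n V^{\cdot n}$, each $V^{\cdot n}$ is relatively compact hence separable, so $G^\circ$ is separable; picking a countable dense set $\{d_1,d_2,\ldots\}$ and noting $d_n\in V^{\cdot m_n}$ for some $m_n$, one can successively choose finitely many generators approximating these — but in fact the clean statement "$\dim G+1$ generators suffice" is what I would cite). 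I would therefore phrase the proof so that the connected case is invoked as a known result, and present the reduction to it as above; no heavy computation is required.
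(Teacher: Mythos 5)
Your coset bookkeeping is fine and matches the paper's opening move (the paper likewise says it suffices to treat the connected case), but the entire content of the lemma is the connected case, and there your proposal has a genuine gap. Your first ``concrete'' route---one element $\exp(t_iX_i)$ per basis vector of $\operatorname{Lie}(G)$, hence $\dim G$ elements in all---fails already for $G^\circ=\mathbb R^m$: any $m$ elements of $\mathbb R^m$ that are linearly dependent lie in a proper closed subspace, and if they are independent they generate a lattice, which is discrete; a dense finitely generated subgroup of $\mathbb R^m$ needs at least $m+1$ generators. Your second route only shows that $G^\circ=\bigcup_n V^{\cdot n}$ is separable, which yields a \emph{countable} dense subgroup; the phrase ``a standard argument gives finitely many elements whose generated subgroup meets every $V^{\cdot n}$ densely'' is exactly the assertion to be proved, not an argument. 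What is left is the appeal to the ``classical fact'' that a connected Lie group has a dense subgroup on finitely many (say $\dim G+1$) generators---but that \emph{is} the lemma in its essential case, so the proposal as written defers rather than proves the key step.

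For comparison, the paper settles the connected case with a short self-contained induction on $\dim G$: choose a maximal closed connected proper subgroup $H\subseteq G$ (it exists since $\dim G<\infty$), take by induction a finite $X\subseteq H$ with $\overline{\langle X\rangle}=H$, and pick a one-parameter group $c:\mathbb R\to G$ with $c(\mathbb R)\not\subseteq H$. Since $1,\sqrt 2$ generate a dense subgroup of $(\mathbb R,+)$, the closure of $\langle X\cup\{c(1),c(\sqrt 2)\}\rangle$ contains $H$ and $c(\mathbb R)$, hence contains the closure of the connected subgroup generated by $H\cup c(\mathbb R)$, which equals $G$ by maximality of $H$. Note the role of the \emph{two} parameters $1,\sqrt2$ on each one-parameter group, precisely the feature your $\dim G$-generator sketch misses. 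If you replace your citation by this induction (or spell out a correct proof of the connected case by some other means), the rest of your write-up---lifting finitely many generators of the discrete group $G/G^\circ$ and translating the dense subgroup of $G^\circ$ across cosets---is correct and completes the proof.
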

\begin{proof}
Suppose first that $G$ is connected.
We proceed by induction on the dimension $n$ of the connected Lie group $G$.
The case $n=0$ is trivial.
In general, let $H\subseteq G$ be a maximal closed connected proper subgroup in $G$.
Such subgroups exist, since $\dim(G)$ is finite, and $\dim(H)<\dim(G)$.
By the induction hypothesis, there exists a finite set $X\subseteq H$ which
generates a dense subgroup of $H$.
Let $c:\mathbb R\longrightarrow G$ be a $1$-parameter group whose image is not 
contained in $H$. Then $c(\mathbb R)\cup H$ generates a connected subgroup
$L$ of $G$. Hence the closure $\overline L$ of $L$ is a closed connected subgroup.
Thus $\overline L=G$ by the maximality of $H$.
The real numbers $1,\sqrt2\in\mathbb R$ generate additively a dense subgroup
in $\mathbb R$. Hence $\{c(1),c(\sqrt2)\}\cup X$ generates a dense subgroup in $G$.

In the general case, we choose elements $x_1,\ldots,x_m\in G^\circ$ which generate a dense
subgroup of $G^\circ$, and elements $y_1,\ldots,y_n\in G$ whose cosets
$y_1G^\circ,\ldots,y_nG^\circ$ generate $G/G^\circ$.
Then the finite set $\{x_1,\ldots,x_m,y_1,\ldots,y_n\}$ generates a dense subgroup of $G$.
\end{proof}
The following observation will be used below. If a Lie group $G$ acts continuously on a space
$X$, and if the stabilizer $G_x$ of the point $x\in X$ is not open
(i.e.~if $G^\circ\not\subseteq G_x$), 
then the orbit $G(x)\subseteq X$ contains a nonconstant path.
For we may choose then a $1$-parameter group $c:\mathbb R\longrightarrow G$
which is not contained in $G_x$. Then $c([0,1])$ is not contained in $G_x$,
and thus $t\longmapsto c(t)(x)$ is a nonconstant path in the orbit $G(x)$.

We recall from the introduction that a simple real Lie algebra $\mathfrak g$ 
is called \emph{absolutely simple} if its
complexification $\mathfrak g\otimes_{\mathbb R}\mathbb C$ is a simple complex Lie algebra.
We extract the following technical result from \cite{KramerUnique}.
\begin{Prop}\label{AbsolutelySimpleProp}
Let $G$ be a connected Lie group whose Lie algebra is absolutely simple.
Assume also that the center of $G$ is finite. Then there exists an element
$h\in G$ and a finite subset $X\subseteq G$ such that the set 
$C=\{ghg^{-1}\mid g\in \operatorname{Cen}_G(X)\}$ is compact and contains 
a nonconstant path.
In particular, $C$ is spacious in $G$.
\end{Prop}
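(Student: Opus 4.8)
The plan is to take $C$ to be an orbit of a conjugation action. Choose a Cartan decomposition $\operatorname{Lie}(G)=\mathfrak k\oplus\mathfrak p$ with associated maximal compact subgroup $K\subseteq G$ and Cartan involution $\theta$. Since $\operatorname{Lie}(G)$ is simple and non-abelian, $\mathfrak k\neq 0$, so a maximal torus $S\subseteq K^\circ$ is nontrivial; fix a topological generator $s_0\in S$ and put $X=\{1,s_0\}$. As centralizers are closed, $L:=\operatorname{Cen}_G(X)=\operatorname{Cen}_G(s_0)=\operatorname{Cen}_G(S)$. It will then suffice to find an element $h\in G$ which centralizes the vector subgroup $A\subseteq L^\circ$ introduced in the next paragraph, but does not centralize $S$: with such an $h$, I claim $C=\{ghg^{-1}\mid g\in L\}$ is compact and contains a nonconstant path, and this already yields the last sentence, since then a nonconstant path $[0,1]\to C\subseteq G$ has spacious image by Proposition~\ref{BaireLemma} (using simplicity of $\operatorname{Lie}(G)$), and hence so does its superset $C$ (see~\ref{Spacious}).

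First I would record the structure of $L$. Put $\mathfrak s=\operatorname{Lie}(S)\subseteq\mathfrak k$; then $\mathfrak l:=\operatorname{Lie}(L)=\mathfrak z_{\operatorname{Lie}(G)}(\mathfrak s)$ is $\theta$-stable, so $\mathfrak l=\mathfrak s\oplus\mathfrak a$ with $\mathfrak a:=\mathfrak l\cap\mathfrak p$. Since $\mathfrak s$ is maximal abelian in $\mathfrak k$ we have $\mathfrak l\cap\mathfrak k=\mathfrak z_{\mathfrak k}(\mathfrak s)=\mathfrak s$, and because $\mathfrak l$ is reductive with $[\mathfrak l,\mathfrak l]\subseteq\mathfrak l\cap\mathfrak k=\mathfrak s$, in fact $[\mathfrak l,\mathfrak l]=0$: thus $\mathfrak l$ is abelian, indeed a maximally compact Cartan subalgebra, and $L^\circ\cong S\times A$ where $A:=\exp(\mathfrak a)\cong\mathbb R^{\dim\mathfrak a}$. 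Also $L$ has finitely many connected components (standard for connected Lie groups with finite center). Now suppose $h$ is as above and set $L_h=L\cap\operatorname{Cen}_G(h)$. Since $A\subseteq L^\circ\cap L_h$, the orbit $L^\circ\cdot h$ is a continuous image of the compact space $L^\circ/A\cong S$, hence compact; together with finiteness of $L/L^\circ$ this gives that $C$, a finite union of conjugates of $L^\circ\cdot h$, is compact. And $L_h$ is not open in $L$, because $S\subseteq L^\circ$ while $S\not\subseteq\operatorname{Cen}_G(h)\supseteq L_h$; by the observation recorded just above the statement, the $L$-orbit $C$ of $h$ therefore contains a nonconstant path.

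It remains to produce $h$, i.e.\ to arrange $\operatorname{Cen}_G(A)\not\subseteq\operatorname{Cen}_G(S)=L$, for then any $h$ in the connected group $\operatorname{Cen}_G(A)^\circ$ lying outside the closed subgroup $L$ works. When $\mathfrak a=0$ — the equal-rank case, which includes the case that $G$ is compact — this is clear, since then $\operatorname{Cen}_G(A)=G$ and $\dim L=\dim\mathfrak s=\operatorname{rank}\mathfrak k<\dim G$. The substantial case is $\mathfrak a\neq0$: here one needs that the split part $\mathfrak a$ of the maximally compact Cartan subalgebra $\mathfrak l$ is \emph{properly} contained in some maximal abelian subalgebra $\mathfrak a'$ of $\mathfrak p$; granting this, $\mathfrak a'\subseteq\mathfrak z_{\operatorname{Lie}(G)}(\mathfrak a)$ shows that $\operatorname{Cen}_G(A)^\circ$ has Lie algebra strictly larger than $\mathfrak l=\operatorname{Lie}(L)$, hence $\operatorname{Cen}_G(A)^\circ\not\subseteq L$. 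This last point is the heart of the matter, and the only place where \emph{absolute} simplicity of $\operatorname{Lie}(G)$ — rather than mere simplicity — is used: it breaks down precisely for the complex simple groups such as $\operatorname{SL}_2(\mathbb C)$, where the maximally compact Cartan subalgebra is already maximally split (consistently with Example~\ref{Ex4}). The required fact is the dichotomy that a noncompact \emph{absolutely} simple real Lie algebra has more than one conjugacy class of Cartan subalgebras — equivalently, that its maximally compact and maximally split Cartan subalgebras are not conjugate — which is the piece of structure theory of real semisimple Lie groups extracted from~\cite{KramerUnique}; finiteness of $\operatorname{Cen}(G)$ enters here too, to bound $\dim L$ and the number of components of $L$. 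The main obstacle is exactly this structural input; everything else is routine bookkeeping with the Cartan decomposition, with Baire category (inside Proposition~\ref{BaireLemma}), and with compactness.
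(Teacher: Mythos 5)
Your overall frame is sound and genuinely different from the paper's: you work with the fundamental Cartan subgroup $L=\operatorname{Cen}_G(S)=\operatorname{Cen}_G(s_0)$ (a nice touch: a topological generator $s_0$ of the maximal torus $S\subseteq K$ replaces the appeal to Lemma~\ref{tfg}), you get compactness of the orbit from $L^\circ/A\cong S$, the nonconstant path from the non-open stabilizer, and spaciousness from Proposition~\ref{BaireLemma}; the paper instead works inside a rank-one Levi subgroup of a next-to-minimal parabolic chosen via \cite[Lemma~10]{KramerUnique} so that $\mathfrak{sl}_2\mathbb{C}$ is not a factor. However, the step you yourself call the heart of the matter rests on a false claim. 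It is not true that every noncompact absolutely simple real Lie algebra has more than one conjugacy class of Cartan subalgebras: $\mathfrak{su}^*(2n)=\mathfrak{sl}_n(\mathbb H)$ for $n\geq 2$, $\mathfrak{so}(2k+1,1)$ for $k\geq 2$, and $\mathfrak e_{6(-26)}$ are absolutely simple (their complexifications $\mathfrak{sl}_{2n}(\mathbb C)$, $\mathfrak{so}(2k+2,\mathbb C)$, $\mathfrak e_6^{\mathbb C}$ are simple), noncompact, and have a single conjugacy class of Cartan subalgebras. Concretely, for $\mathfrak g=\mathfrak{so}(5,1)\cong\mathfrak{sl}_2(\mathbb H)$ one has $\operatorname{rank}\mathfrak k=2<3=\operatorname{rank}\mathfrak g_{\mathbb C}$ and real rank $1$, so there is no compact Cartan subalgebra and every $\theta$-stable one has one-dimensional $\mathfrak p$-part; hence the split part $\mathfrak a$ of your $\mathfrak l=\mathfrak s\oplus\mathfrak a$ is already maximal abelian in $\mathfrak p$, and the proper inclusion $\mathfrak a\subsetneq\mathfrak a'$ on which your deduction of $\operatorname{Cen}_G(A)\not\subseteq L$ rests does not exist.

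The conclusion you need does still hold in these examples, but for a different reason: there $\mathfrak z_{\mathfrak k}(\mathfrak a)$ is nonabelian (for $\mathfrak{so}(5,1)$ it is $\mathfrak{so}(4)$), so $\mathfrak z_{\mathfrak g}(\mathfrak a)=\mathfrak z_{\mathfrak k}(\mathfrak a)\oplus\mathfrak a\supsetneq\mathfrak s\oplus\mathfrak a=\mathfrak l$ even though $\mathfrak z_{\mathfrak p}(\mathfrak a)=\mathfrak a$. So the statement your proof actually requires is: for $\mathfrak g$ absolutely simple, noncompact and of unequal rank, $\mathfrak z_{\mathfrak g}(\mathfrak a)\neq\mathfrak l$ --- equivalently, $\mathfrak g$ is not simultaneously quasi-split and with fundamental Cartan conjugate to a maximally split one; among simple real Lie algebras this degenerate combination occurs exactly for the complex simple algebras viewed as real. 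That is a genuine piece of structure theory (it needs an argument via the classification, Vogan-diagram-type reasoning, or a case analysis of quasi-split forms), and your proposal neither states it correctly nor proves it; as written there is a gap precisely at the construction of $h$ when $\mathfrak a\neq 0$. Two smaller points: the finiteness of $L/L^\circ$ is true but needs the almost-algebraicity of $\operatorname{Ad}(G)$ rather than the parenthetical reason you give, and your remark that absolute simplicity enters only through the (false) Cartan-class dichotomy misrepresents where it enters in the paper, namely in excluding $\mathfrak{sl}_2\mathbb{C}$-factors from the Levi subgroup.
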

\begin{proof}
If $G$ is compact, we put $X=\{1\}$ and we choose $h\in G\setminus\operatorname{Cen}(G)$.
Then the conjugacy class $C=\{ghg^{-1}\mid g\in G\}$ is compact and,
since $h$ is not central,
contains a nonconstant path by the observation recorded above.
By Proposition~\ref{BaireLemma}, the set $C$ is spacious in $G$.

Suppose now that $G$ is not compact. Then $G$ has positive real rank and,
 in particular,
there exist nontrivial parabolic subgroups in $G$.
We refer to \cite[pp. 2627--2628]{KramerUnique} for the following facts.
In \emph{loc.cit.} it is assumed that $G$ is centerless, but the reasoning
remains valid in the presence of a finite center, as we explain now.

Let $P\subseteq G$ be a next-to-minimal parabolic, i.e.~up to conjugation,
there is exactly one parabolic subgroup of $G$ properly contained in $P$.
Let $H\subseteq P$ be a reductive
Levi subgroup, so that $P=HU$, where $U$ is the unipotent radical of $P$. 
Then $H$ is a reductive group of real rank $1$
(because $P$ was next-to-minimal).
It is shown in \cite[p.~73]{WarnerHarmonic}
\footnote{\ Where $H=\mathsf L_\Theta$ in the notation of \emph{loc.cit.}}
that the group $H$ can be written as the $G$-centralizer of a closed connected abelian subgroup 
$S\subseteq G$.
Since $\operatorname{Lie}(G)$ is absolutely simple, the parabolic $P$ can be
chosen in such a way that $\mathfrak{sl}_2\mathbb C$ is not a direct factor in 
$\operatorname{Lie}(H)$, see \cite[Lemma~10]{KramerUnique}.
It is shown in \emph{loc.cit.} p.~2627 that then there exists a closed connected abelian subgroup
$T\subseteq G$ and an element $h\in G$ such that the conjugates of $h$ under the group
$L=\operatorname{Cen}_H(T)$ form a compact set containing a 
nonconstant path.
Now \[L=H\cap\operatorname{Cen}_{G}(T)=\operatorname{Cen}_{G}(S)\cap\operatorname{Cen}_{G}(T)=
\operatorname{Cen}_{G}\big(\overline{\langle S\cup T\rangle}\big).\]
By Lemma \ref{tfg} we may choose a finite set $X\subseteq \overline{\langle S\cup T\rangle}$ 
which generates a dense subgroup of $\overline{\langle S\cup T\rangle}$.
Thus $L=\operatorname{Cen}_{G}(X)$, and $C=\{ghg^{-1}\mid g\in L\}$ is compact and 
contains a nonconstant path, as required.
By Proposition~\ref{BaireLemma}, the set $C$ is spacious in $G$.
\end{proof}
If we combine Proposition~\ref{AbsolutelySimpleProp} with Theorem~\ref{MainThm},
we obtain immediately a rigidity result for Lie groups satisfying the assumptions of
Proposition~\ref{AbsolutelySimpleProp}. However, we can do better.
First we extend Proposition~\ref{AbsolutelySimpleProp} to the semisimple case.
\begin{Prop}\label{SemisimpleProp}
Let $G$ be a Lie group. Suppose that $\operatorname{Cen}(G^\circ)$ is finite and 
that the Lie algebra $\operatorname{Lie}(G)$ is a direct sum of absolutely simple ideals.
Then there exists an element $h$ and a finite subset $X$ in $G^\circ$ such that
$C=\{ghg^{-1}\mid g\in\operatorname{Cen}_G(X)\}$ is compact, spacious, and contains a 
nonconstant path.
\end{Prop}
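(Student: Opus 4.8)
The plan is to reduce the semisimple case to the absolutely simple case handled by Proposition~\ref{AbsolutelySimpleProp}, by picking out a single simple factor and ``centralizing away'' the rest of the group. Write $\operatorname{Lie}(G)=\mathfrak g_1\oplus\cdots\oplus\mathfrak g_k$ as a direct sum of absolutely simple ideals. Since $\operatorname{Cen}(G^\circ)$ is finite, $G^\circ$ is (an isogeny quotient of) a product of simple Lie groups, and in particular the analytic subgroups $G_i=\langle\exp(\mathfrak g_i)\rangle$ are closed, normal in $G^\circ$, have finite center, and pairwise commute, with $G^\circ=G_1\cdots G_k$. Fix the factor $G_1$. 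Apply Proposition~\ref{AbsolutelySimpleProp} to the connected Lie group $G_1$ (whose Lie algebra is absolutely simple with finite center): this produces an element $h\in G_1$ and a finite set $X_1\subseteq G_1$ with $C_1=\{gh g^{-1}\mid g\in\operatorname{Cen}_{G_1}(X_1)\}$ compact and containing a nonconstant path.

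The key step is to realize $C_1$ as a conjugacy-class-slice inside $G$ rather than inside $G_1$. The idea is to enlarge the finite set $X_1$ so that its $G$-centralizer, intersected with $G^\circ$, is exactly $\operatorname{Cen}_{G_1}(X_1)\cdot Z$ for a subgroup $Z$ that already centralizes $h$. Concretely: the factors $G_2,\ldots,G_k$ centralize $G_1$, hence centralize $h$; and one can choose, for each $j\geq 2$, an element whose $G$-centralizer meets $G^\circ$ in $G_1\cdots \widehat{G_j}\cdots G_k$ times a proper subgroup of $G_j$ — for instance, using Lemma~\ref{tfg}, pick a finite set $X_j\subseteq G_j$ generating a dense subgroup of $G_j$, so that $\operatorname{Cen}_{G}(X_j)\cap G^\circ$ is the centralizer of all of $G_j$, which (since $G_j$ has finite center and is its own commutator) is $Z(G_j)\cdot\prod_{i\neq j}G_i$. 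Setting $X=X_1\cup X_2\cup\cdots\cup X_k$, a short computation gives
\[
\operatorname{Cen}_G(X)\cap G^\circ=\operatorname{Cen}_{G_1}(X_1)\cdot Z(G_2)\cdots Z(G_k),
\]
and every element of $Z(G_2)\cdots Z(G_k)\subseteq G_2\cdots G_k$ commutes with $h\in G_1$. One should also add to $X$ the (finitely many) elements of $\operatorname{Cen}(G^\circ)$ or suitable coset representatives if needed to ensure $\operatorname{Cen}_G(X)\subseteq G^\circ$; here the hypothesis that the center of $G^\circ$ is finite is what keeps $X$ finite. Then
\[
C=\{ghg^{-1}\mid g\in\operatorname{Cen}_G(X)\}=\{g_1 h g_1^{-1}\mid g_1\in\operatorname{Cen}_{G_1}(X_1)\}=C_1,
\]
so $C$ is compact and contains a nonconstant path.

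Finally, spaciousness is automatic: since $\operatorname{Lie}(G_1)$ is simple and $C$ contains a nonconstant path lying in $G_1$, Proposition~\ref{BaireLemma} applied inside $G_1$ shows $C$ is spacious in $G_1$, hence a fortiori spacious in $G$ (spaciousness passes to larger ambient groups and to larger subsets). The main obstacle I anticipate is the bookkeeping in the centralizer computation: one must be careful that $\operatorname{Cen}_G(X)$ does not acquire extra components or extra central pieces that fail to centralize $h$, and this is where finiteness of $\operatorname{Cen}(G^\circ)$ — together with the fact that each absolutely simple $G_i$ equals its own commutator subgroup and has finite center, so its centralizer in $G^\circ$ is as small as possible — has to be used decisively. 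Once the finite set $X$ is correctly assembled, the verification that $C=C_1$ and the invocation of Propositions~\ref{AbsolutelySimpleProp} and~\ref{BaireLemma} are routine.
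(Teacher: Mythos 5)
There is a genuine gap, and it is exactly at the step you dismiss as ``automatic'': spaciousness does \emph{not} pass to a larger ambient group. Being spacious is relative to the group in which interiors are taken: $C$ is spacious in $G$ iff some product of finitely many left translates of $CC^{-1}$ has nonempty interior \emph{in $G$}. It is inherited by larger subsets of the same group, and it survives passing from $G^\circ$ to $G$ because $G^\circ$ is open, but it does not survive passing from a closed positive-codimension subgroup $G_1$ to $G$. In your construction $C$ is contained in the single factor $G_1$ (conjugation by elements of $\operatorname{Cen}_G(X)$ preserves $G_1$, as you note). If $G$ is connected with $k\geq 2$ simple ideals, then $\operatorname{Ad}(G)$ is connected and hence fixes each simple ideal, so each $G_i$ is normal in $G$; consequently, for arbitrary $g_1,\ldots,g_r\in G$ one has $g_1CC^{-1}g_2CC^{-1}\cdots g_rCC^{-1}\subseteq g_1g_2\cdots g_r\overline{G_1}$, a coset of a proper closed subgroup, which has empty interior in $G$. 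Concretely, for $G=\operatorname{SO}(3)\times\operatorname{SO}(3)$ your $C$ lies in $\operatorname{SO}(3)\times\{1\}$ and is not spacious in $G$. (Lemma~\ref{SemiLemma} does promote spaciousness from a subgroup to an overgroup, but only under the maximality hypothesis of Lemma~\ref{LieSemidirectLemma}, which a proper simple factor of a semisimple group never satisfies.) This is precisely why the paper does not work with one factor only: it argues by induction on the number of ideals, takes $h=h_1h_2$ and $X=X_1\cup X_2$ with contributions from \emph{every} factor, identifies $C=C_1C_2$, and then assembles an identity neighborhood of $G$ as $D_1D_2$ from translates of $C_iC_i^{-1}$ chosen factorwise. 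Your centralizer bookkeeping via Lemma~\ref{tfg} is in the right spirit (the paper uses the same trick), but the resulting set sees only one factor and therefore cannot be spacious when $k\geq 2$.

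A secondary problem is your treatment of the disconnected case. Enlarging $X\subseteq G^\circ$ so that $\operatorname{Cen}_G(X)\subseteq G^\circ$ is impossible in general: if $G=G^\circ\times F$ with $F$ finite and nontrivial, then $F\subseteq\operatorname{Cen}_G(X)$ for every $X\subseteq G^\circ$; moreover $G/G^\circ$ need not be finite, so it is not even clear that $\operatorname{Cen}_G(X)$ meets only finitely many components. The paper instead proves that $[\operatorname{Cen}_G(X):\operatorname{Cen}_{G^\circ}(X)]$ is finite, using $\operatorname{Ad}$ and the finiteness of $\operatorname{Aut}_\RR(\operatorname{Lie}(G))/\operatorname{Aut}_\RR(\operatorname{Lie}(G))^\circ$ for semisimple $\operatorname{Lie}(G)$, and then writes $C=a_1C_0a_1^{-1}\cup\cdots\cup a_kC_0a_k^{-1}$ as a finite union of conjugates of the compact set $C_0$ built in $G^\circ$; compactness follows, and spaciousness of $C$ follows from $C_0\subseteq C$ together with openness of $G^\circ$ in $G$. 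As it stands, your proposal produces a compact set containing a nonconstant path, but not a spacious one, and the patch for disconnected $G$ does not go through.
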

\begin{proof}
We first consider the case where $G$ is connected.
Then $G$ is a connected semisimple Lie group with finite center.
We proceed by induction on the number $r$ of 
simple ideals in the Lie algebra of $G$.
The case $r=1$ is taken care of by 
Proposition~\ref{AbsolutelySimpleProp}.
For $r\geq 2$ we decompose the Lie algebra of $G$ into a simple ideal and a complementary
semisimple ideal. Accordingly, $G$ is a central product $G=G_1G_2$ of two closed connected commuting
subgroups $G_1,G_2$, with $\operatorname{Lie}(G_1)$ absolutely simple and where $G_2$ has a semisimple Lie
algebra which is a sum of $r-1$ absolutely simple ideals.
By the induction hypothesis we find elements $h_i\in G_i$ and finite subsets $X_i\subseteq G_i$
such that the sets $C_i=\{gh_ig^{-1}\mid g\in \operatorname{Cen}_{G_i}(X_i)\}$ 
contain nonconstant paths and are compact and spacious in $G_i$,
for $i=1,2$. Since $[G_1,G_2]=1$ and $G=G_1G_2$, we have
$\operatorname{Cen}_G(X_i)=G_{2-i}\operatorname{Cen}_{G_i}(X_i)$ and 
$C_i=\{gh_ig^{-1}\mid g\in \operatorname{Cen}_{G}(X_i)\}$.
Put $h=h_1h_2$ and $X=X_1\cup X_2$ and 
$C=\{ghg^{-1}\mid g\in\operatorname{Cen}_G(X)\}$.
Then 
\begin{align*}
 C&=\{(g_1g_2)(h_1h_2)(g_1g_2)^{-1}\mid g_i\in G_i\text{ for $i=1,2$ and } g_1g_2\in\operatorname{Cen}_G(X)\}\\
 &=\{(g_1g_2)(h_1h_2)(g_1g_2)^{-1}\mid g_i\in \operatorname{Cen}_{G_i}(X_i)\text{ for $i=1,2$}\}\\
 &=\{(g_1h_1g_1^{-1})(g_2h_2g_2^{-1})\mid g_i\in \operatorname{Cen}_{G_i}(X_i)\text{ for $i=1,2$}\}\\
 &=C_1C_2
\end{align*}
is compact and contains a nonconstant path.
There exist elements $g_{i,1},\ldots,g_{i,s}\in G_i$ such that 
$D_i=g_{i,1}C_iC_i^{-1}g_{i,2}C_iC_i^{-1}\cdots g_{i,s}C_iC_i^{-1}$ is a compact 
identity neighborhood in $G_i$, for $i=1,2$. 
We can safely assume that the number $s$ of elements is in  both cases the same,
because the product of an open set and an arbitrary set in a topological group is always open.
Consider the natural surjective homomorphism $G_1\times G_2\longrightarrow G$.
This map is open and maps $D_1\times D_2$ onto $D_1D_2$, whence $D_1D_2$ is an 
identity neighborhood in $G$. Put $g_j=g_{1,j}g_{2,j}$, for $j=1,\ldots,s$. Then 
\[
 D_1D_2=g_1CC^{-1}g_2CC^{-1}\cdots g_sCC^{-1}.
\]
Thus $C$ is spacious in $G$.

It remains to consider the case where $G$ is not connected.
Then $G^\circ$ is a connected semisimple Lie group with finite center, as above.
We  choose $h$ and $X$ in $G^\circ$ as before and we put
\[
C_0=\{ghg^{-1}\mid g\in\operatorname{Cen}_{G^\circ}(X)\}\subseteq 
\{ghg^{-1}\mid g\in\operatorname{Cen}_{G}(X)\}=C.
\]
The set $C_0$ is thus compact, spacious, and contains a nonconstant path by the argument given above.
It remains to show that $C$ is compact.
Consider the Lie group $\operatorname{Aut}_\RR(\operatorname{Lie}(G))$
of all $\mathbb R$-linear automorphisms of the Lie algebra $\operatorname{Lie}(G)$.
The group $G$ acts  on $\operatorname{Lie}(G)$ via the adjoint representation
\[
 G\xrightarrow{\ \operatorname{Ad}\ }\operatorname{Aut}_\RR(\operatorname{Lie}(G)).
\]
Put $H=\operatorname{Ad}(G)$. We note that $H$ acts faithfully on $G^\circ$ by conjugation
because $G^\circ$ is generated by $\exp(\operatorname{Lie}(G))$.
Under the homomorphism $\operatorname{Ad}$, the connected group $G^\circ$ maps onto
the identity component $\operatorname{Aut}_\RR(\operatorname{Lie}(G))^\circ$, because $\operatorname{Lie}(G)$
is semisimple and thus 
$\operatorname{Lie}(\operatorname{Aut}_\RR(\operatorname{Lie}(G)))=\operatorname{Lie}(G)$.
It follows that
$\operatorname{Aut}_\RR(\operatorname{Lie}(G))^\circ=\operatorname{Ad}(G^\circ)=H^\circ$.
The quotient $\operatorname{Aut}_\RR(\operatorname{Lie}(G))/\operatorname{Aut}_\RR(\operatorname{Lie}(G))^\circ$
is finite, see \cite[Corollary~2]{Murakami}, \cite[Proposition~13.1.5]{HilgertNeeb}, 
or \cite{HG} for a stronger structural result.
Hence $[H:H^\circ]$ is also finite.
Let $H_X$ denote the point-wise stabilizer of $X$ under the $H$-action
on $G^\circ$.
Then $(H^\circ)_X=H_X\cap H^\circ$ has finite index in $H_X$.
Hence $[\operatorname{Cen}_{G}(X):\operatorname{Cen}_{G^\circ}(X)]$ is finite
as well, and we may put 
\[\operatorname{Cen}_{G}(X)=a_1\operatorname{Cen}_{G^\circ}(X)\cup \cdots\cup a_k\operatorname{Cen}_{G^\circ}(X),\]
for elements $a_1,\ldots,a_k\in G$.
Thus
\[
 C=a_1C_0a_1^{-1}\cup\cdots\cup a_kC_0a_k^{-1}
\]
is compact.
\end{proof}
The following result is Theorem A in the Introduction.
\begin{Thm}\label{SemisimpleThm}
Let $G$ be a Lie group. Suppose that $\operatorname{Cen}(G^\circ)$ is finite and that 
the Lie algebra $\operatorname{Lie}(G)$ is a direct sum of absolutely simple ideals.
Let $\mathbfcal K$ be an almost Polish class containing $G$.
Then $G$ is rigid within $\mathbfcal K$.
\end{Thm}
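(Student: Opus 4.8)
The plan is to deduce Theorem~\ref{SemisimpleThm} by feeding Proposition~\ref{SemisimpleProp} into Theorem~\ref{MainThm}, with Lemma~\ref{LittleLemma} supplying the needed $\mathbfcal K$-analyticity. First I would record that the hypothesis on $\operatorname{Lie}(G)$ already provides the group-theoretic input required by Theorem~\ref{MainThm}: a direct sum of absolutely simple ideals is in particular semisimple, hence perfect, and $G$ belongs to $\mathbfcal K$ by assumption. So let $1\to N\to K\xrightarrow{\ \phi\ }G\to1$ be any short exact sequence with $\phi$ an abstract homomorphism, $K\in\mathbfcal K$, and $(K,N)\in\mathbfcal K_a$, as in the definition of rigidity. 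By Theorem~\ref{MainThm} it then suffices to exhibit a single compact spacious subset $C\subseteq G$ whose preimage $\phi^{-1}(C)\subseteq K$ is $\mathbfcal K$-analytic.

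To produce such a $C$, I would apply Proposition~\ref{SemisimpleProp} to obtain an element $h\in G^\circ$ and a finite subset $X\subseteq G^\circ$ such that $C=\{ghg^{-1}\mid g\in\operatorname{Cen}_G(X)\}$ is compact and spacious. It remains to check that $\phi^{-1}(C)$ is $\mathbfcal K$-analytic. Identifying $G$ with the abstract quotient $K/N$ via the isomorphism induced by $\phi$, so that $\phi$ plays the role of the map $\pi$ in Lemma~\ref{LittleLemma}, I would rewrite the conjugacy-type set $C$ in terms of the operations that Lemma~\ref{LittleLemma} keeps inside the $\mathbfcal K$-analytic sets: since $ghg^{-1}=[g,h]\,h$, one has $C=B\{h\}$ with $B=\{[g,h]\mid g\in\operatorname{Cen}_G(X)\}$. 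Now Lemma~\ref{LittleLemma}(4) gives that $\phi^{-1}(\operatorname{Cen}_G(X))$ is $\mathbfcal K$-analytic; the set $\phi^{-1}(\{h\})$ is a left translate of the $\mathbfcal K$-analytic subgroup $N$ and hence $\mathbfcal K$-analytic; Lemma~\ref{LittleLemma}(2) then shows $\phi^{-1}(B)$ is $\mathbfcal K$-analytic; and finally Lemma~\ref{LittleLemma}(1) shows $\phi^{-1}(C)=\phi^{-1}(B\{h\})$ is $\mathbfcal K$-analytic. (Alternatively, writing $\phi^{-1}(C)$ directly as the continuous image of $\phi^{-1}(\operatorname{Cen}_G(X))\times N$ under $(g,n)\mapsto g\tilde h g^{-1}n$ for a fixed $\tilde h\in\phi^{-1}(h)$ works as well.)

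Having arranged a compact spacious $C\subseteq G$ with $\phi^{-1}(C)$ $\mathbfcal K$-analytic, Theorem~\ref{MainThm} applies verbatim and yields that $\phi$ is continuous and open; since the short exact sequence was arbitrary, $G$ is rigid within $\mathbfcal K$. I do not expect a genuine obstacle in this last theorem: essentially all the substance has been absorbed into the earlier results — the real semisimple structure theory underlying Proposition~\ref{AbsolutelySimpleProp} and Proposition~\ref{SemisimpleProp}, and the van der Waerden shrinking argument inside Theorem~\ref{MainThm}. The only point that requires care is the bookkeeping of the previous paragraph, namely expressing the conjugacy class $C$ through products, commutators and centralizers so as to apply Lemma~\ref{LittleLemma}, and verifying that passing through the identification $G\cong K/N$ introduces no difficulty.
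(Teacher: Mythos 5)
Your proposal is correct and takes essentially the same route as the paper: Proposition~\ref{SemisimpleProp} supplies the compact spacious set $C=\{ghg^{-1}\mid g\in\operatorname{Cen}_G(X)\}$, Lemma~\ref{LittleLemma} gives $\mathbfcal K$-analyticity of $\phi^{-1}(C)$, and Theorem~\ref{MainThm} concludes. Your explicit decomposition $C=\{[g,h]\mid g\in\operatorname{Cen}_G(X)\}\{h\}$ via parts (1), (2) and (4) of Lemma~\ref{LittleLemma} is in fact a slightly more detailed justification than the paper's bare citation of part (4).
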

\begin{proof}
Let $h$ and $X$ be as in Proposition~\ref{SemisimpleProp}
and put $C=\{ghg^{-1}\mid g\in\operatorname{Cen}_G(X)\}$.
Then $C$ is compact and spacious by \emph{loc.cit.}
Suppose that 
\[
 1\longrightarrow N \xhookrightarrow{\ \ \ } K\xrightarrow{\ \phi\ }G\longrightarrow 1
\]
is a short exact sequence, for some abstract homomorphism $\phi$,
and that $(K,N)\in\mathbfcal K_a$.
The preimage of $C$ is  $\mathbfcal K$-analytic by Lemma~\ref{LittleLemma}(4).
Hence $G$ is rigid by Theorem~\ref{MainThm}.
\end{proof}

\section{The case of semidirect products of Lie groups}
\label{SemidirectSection}

In this section,  
all
Lie group representations will be assumed to be finite dimensional and continuous,
unless stated otherwise.

\subsection{Construction}
Given a real representation
$\rho: H\longrightarrow\operatorname{GL}(V)$ of a Lie group $H$ we may form 
the semidirect product \[G=V\rtimes_\rho H.\] 
The underlying manifold is
$V\times H$, and the multiplication is given by 
\[(u,a)(v,b)=(u+av,ab),\leqno{(**)}\] where
we write $\rho(a)(v)=av$ for short. 
The neutral element is $(0,1)$ and the inverse of
$(u,a)$ is \[(u,a)^{-1}=(-a^{-1}u,a^{-1}).\] Then clearly $G$ is a Lie group.
We may identify $H$ and $V$ with closed subgroups of $G$, and
$V$ is normal in $G$.

As a vector space, the Lie algebra of $G$,
which is the tangent space of the manifold $V\times H$ at the point $(0,1)$,
is given by
$\operatorname{Lie}(G)=V\oplus\operatorname{Lie}(H)$.
From this decomposition and the 
representations $\operatorname{Ad}$ and $\rho$ of $H$ on $\operatorname{Lie}(H)$ and $V$,
the Lie bracket can be worked out as
\[[(u,X),(v,Y)]=(Xv-Yu,[X,Y]),\]
where $u,v\in\operatorname{Lie}(V)=V$ and $X,Y\in\operatorname{Lie}(H)$, and 
where we put $Xv=\operatorname{Lie}(\rho)(X)(v)$ for short.
We refer to \cite[Chapter~V.3]{HiHoLa} for details.

Now we consider Lie subalgebras of $\operatorname{Lie}(G)$ containing $\operatorname{Lie}(H)$.
As an $H^\circ$-module under the adjoint action, the vector space $\operatorname{Lie}(G)$
decomposes as a direct sum of $H^\circ$-modules as
\[\operatorname{Lie}(G)=V\oplus\operatorname{Lie}(H).\]
Suppose that $\mathfrak m\subseteq \operatorname{Lie}(G)$ is a Lie subalgebra
containing $\operatorname{Lie}(H)$. Since $\mathfrak m$ is then an $H^\circ$-module,
we conclude that \[\mathfrak m=(\mathfrak m\cap V)\oplus\operatorname{Lie}(H)\]
as an $H^\circ$-module. This observation has the following consequences.
We use the notation set up so far.
\begin{Lem}\label{LieSemidirectLemma}
Let $\rho:H\longrightarrow\operatorname{GL}(V)$ be a real representation
of a Lie group $H$
and put $G=V\rtimes_\rho H$.
\begin{enumerate}[\rm(1)]
\item If the subrepresentation $H^\circ\longrightarrow\operatorname{GL}(V)$ is nontrivial, then
$\operatorname{Lie}(H)$ is not an ideal in $\operatorname{Lie}(G)$.
\item If the subrepresentation $H^\circ\longrightarrow\operatorname{GL}(V)$ is 
irreducible, then $\operatorname{Lie}(H)$ is a maximal proper subalgebra in $\operatorname{Lie}(G)$.
\item If $\operatorname{Lie}(H)$ is perfect and if the subrepresentation 
$H^\circ\longrightarrow\operatorname{GL}(V)$ is nontrivial
and irreducible, then $\operatorname{Lie}(G)$ is perfect.
\end{enumerate}
\end{Lem}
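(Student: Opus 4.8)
The plan is to prove the three statements by working inside $\operatorname{Lie}(G) = V \oplus \operatorname{Lie}(H)$ with the bracket formula $[(u,X),(v,Y)] = (Xv - Yu, [X,Y])$ already recorded above, and exploiting the $H^\circ$-module decomposition $\mathfrak m = (\mathfrak m \cap V) \oplus \operatorname{Lie}(H)$ for any subalgebra $\mathfrak m \supseteq \operatorname{Lie}(H)$. For (1): if $\operatorname{Lie}(H)$ were an ideal, then $[(u,0),(0,Y)] = (-Yu, 0) \in \operatorname{Lie}(H)$ for all $u \in V$ and $Y \in \operatorname{Lie}(H)$, forcing $Yu = 0$; that is, $\operatorname{Lie}(\rho)(Y)$ annihilates $V$ for every $Y \in \operatorname{Lie}(H)$, which says exactly that the subrepresentation of $H^\circ$ on $V$ is trivial, contrary to hypothesis.

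For (2): let $\mathfrak m$ be a subalgebra with $\operatorname{Lie}(H) \subsetneq \mathfrak m$. By the $H^\circ$-module decomposition, $\mathfrak m \cap V$ is a nonzero $H^\circ$-submodule of $V$; since $H^\circ \to \operatorname{GL}(V)$ is irreducible, $\mathfrak m \cap V = V$, so $\mathfrak m = \operatorname{Lie}(G)$. Hence $\operatorname{Lie}(H)$ is maximal (it is proper because the representation being irreducible and—implicitly—$V \neq 0$ means $V \not\subseteq \operatorname{Lie}(H)$; irreducibility of a trivial-dimensional space is vacuous, so one takes $V \neq 0$ as understood from "irreducible").

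For (3): I want to show $[\operatorname{Lie}(G), \operatorname{Lie}(G)] = \operatorname{Lie}(G)$. The derived algebra contains $[\operatorname{Lie}(H), \operatorname{Lie}(H)] = \operatorname{Lie}(H)$ since $\operatorname{Lie}(H)$ is perfect. It also contains all brackets $[(0,X),(v,0)] = (Xv, 0)$ with $X \in \operatorname{Lie}(H)$, $v \in V$; the span of these is $\operatorname{Lie}(\rho)(\operatorname{Lie}(H)) \cdot V$, a nonzero $H^\circ$-submodule of $V$ (nonzero because the subrepresentation is nontrivial, so some $Xv \neq 0$), hence all of $V$ by irreducibility. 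Therefore $[\operatorname{Lie}(G),\operatorname{Lie}(G)] \supseteq V \oplus \operatorname{Lie}(H) = \operatorname{Lie}(G)$, and $\operatorname{Lie}(G)$ is perfect.

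None of these steps presents a genuine obstacle; the only point requiring a little care is the bookkeeping that the relevant subspaces of $V$ produced by bracketing are $H^\circ$-submodules, so that irreducibility can be invoked to conclude they are all of $V$. This follows because $\operatorname{Lie}(H)$ is an $H^\circ$-submodule of $\operatorname{Lie}(G)$ under $\operatorname{Ad}$ and brackets of submodules are submodules—or, more concretely, because $\operatorname{Lie}(\rho)$ intertwines the adjoint action of $H^\circ$ on $\operatorname{Lie}(H)$ with its action on $\operatorname{End}(V)$, so $\operatorname{Lie}(\rho)(\operatorname{Lie}(H)) \cdot V$ is $H^\circ$-stable. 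I would state each of the three items as a short displayed computation plus one sentence invoking irreducibility, and leave the module-theoretic remark implicit or in a half-line.
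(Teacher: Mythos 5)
Your proof is correct, and for parts (1) and (2) it is essentially the paper's argument: the same bracket formula $[(u,X),(v,Y)]=(Xv-Yu,[X,Y])$, the same observation that a subalgebra $\mathfrak m\supseteq\operatorname{Lie}(H)$ splits as $(\mathfrak m\cap V)\oplus\operatorname{Lie}(H)$ with $\mathfrak m\cap V$ an $H^\circ$-submodule, and the same use of connectedness of $H^\circ$ to pass between triviality of the group action and of the Lie algebra action. The only divergence is in (3): you show directly that the brackets $[(0,X),(v,0)]=(Xv,0)$ span a nonzero $H^\circ$-stable subspace of $V$ (via the intertwining relation $\rho(h)\operatorname{Lie}(\rho)(X)\rho(h)^{-1}=\operatorname{Lie}(\rho)(\operatorname{Ad}(h)X)$), hence all of $V$ by irreducibility, so $[\operatorname{Lie}(G),\operatorname{Lie}(G)]\supseteq V\oplus\operatorname{Lie}(H)$; the paper instead deduces (3) formally from (1) and (2): the derived algebra is an ideal containing the perfect algebra $\operatorname{Lie}(H)$, it cannot equal $\operatorname{Lie}(H)$ by (1), so by the maximality in (2) it is all of $\operatorname{Lie}(G)$. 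Your route costs one extra equivariance check but makes the containment $V\subseteq[\operatorname{Lie}(G),\operatorname{Lie}(G)]$ explicit; the paper's is marginally slicker since it reuses (1) and (2) verbatim. Both are complete.
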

\begin{proof}
If the connected Lie group $H^\circ$ acts nontrivially on $V$ via $\rho$, then its Lie algebra
also acts nontrivially on $V$ 
via $\operatorname{Lie}(\rho)$. Hence there exists $X\in\operatorname{Lie}(H)$ and $v\in V$
with $Xv\neq 0$. Then $[(0,X),(v,0)]=(Xv,0)\not\in\operatorname{Lie}(H)$, hence $\operatorname{Lie}(H)$ is not an ideal.
This proves (1).

For (2), suppose that $\mathfrak m$ is a subalgebra of $\operatorname{Lie}(G)$ containing $\operatorname{Lie}(H)$
properly. Then $V\cap\mathfrak m\neq\{0\}$ as we noted above. 
Since $V$ is by assumption an irreducible $H^\circ$-module,
$V\cap\mathfrak m=V$ and hence $\mathfrak m=\operatorname{Lie}(G)$.

If $\operatorname{Lie}(H)$ is perfect, then 
$\operatorname{Lie}(H)=[\operatorname{Lie}(H),\operatorname{Lie}(H)]\subseteq[\operatorname{Lie}(G),\operatorname{Lie}(G)]$.
By (1), the Lie algebra $\operatorname{Lie}(H)$ is not an ideal in $\operatorname{Lie}(G)$, 
whence $\operatorname{Lie}(H)\neq[\operatorname{Lie}(G),\operatorname{Lie}(G)]$.
By (2), this implies that $[\operatorname{Lie}(G),\operatorname{Lie}(G)]=\operatorname{Lie}(G)$.
\end{proof}
\begin{Lem}\label{SemiLemma}
Suppose that $\rho:H\longrightarrow\operatorname{GL}(V)$ is a real representation of a Lie group $H$, 
that $\operatorname{Lie}(H)$ is perfect and 
that the subrepresentation $H^\circ\longrightarrow\operatorname{Lie}(G)$ is nontrivial and irreducible. 
Let $f:[0,1]\longrightarrow H$ be a path and put $C=f([0,1])$.
If $C$ is spacious in $H$, then $C$ is also spacious in $G$.
\end{Lem}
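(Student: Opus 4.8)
The plan is to use the Baire--Yamabe Process from Section~\ref{BaireYamabeProcess} and the normal subgroup structure it produces. Since $C=f([0,1])$ is spacious in $H$, Proposition~\ref{BaireLemma0} gives a finite set $F\in\operatorname{Fin}(H)$ with $Y_C(F)=H^\circ$, where $Y_C$ is the Baire--Yamabe Process computed inside $H$. Now view $f$ as a path in $G$ via the inclusion $H\hookrightarrow G$ and run the Baire--Yamabe Process inside $G$ instead. Since $P=CC^{-1}\subseteq H$ and $F\subseteq H$, the subgroup $Y_C^G(F)$ computed in $G$ still contains $H^\circ$: indeed the sets $gPg^{-1}$ for $g\in F$ are exactly the same whether computed in $H$ or in $G$, so $Y_C^G(F)=Y_C^H(F)=H^\circ$. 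Thus after running the process in $G$ from the same finite set $F$, we land on $H^\circ$, which is a path connected subgroup of $G$.

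Next I would pass to a \emph{maximal} member of $\im(Y_C^G)$ dominating $Y_C^G(F)$; call it $Y_C^G(F')$, where $F\subseteq F'\in\operatorname{Fin}(G)$. By Proposition~\ref{BYP}, $Y_C^G(F')$ is a normal subgroup of $G$ containing $P$, and its Lie algebra $\mathfrak m=\operatorname{Lie}(Y_C^G(F'))$ is a $G$-invariant ideal of $\operatorname{Lie}(G)$ containing $\operatorname{Lie}(H^\circ)=\operatorname{Lie}(H)$. Here is where the hypotheses enter: by Lemma~\ref{LieSemidirectLemma}(1), since the subrepresentation $H^\circ\to\operatorname{GL}(V)$ is nontrivial, $\operatorname{Lie}(H)$ is \emph{not} an ideal in $\operatorname{Lie}(G)$; hence $\mathfrak m\supsetneq\operatorname{Lie}(H)$. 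But by Lemma~\ref{LieSemidirectLemma}(2), irreducibility of the subrepresentation forces $\operatorname{Lie}(H)$ to be a maximal proper subalgebra of $\operatorname{Lie}(G)$, so the only subalgebra strictly containing it is $\operatorname{Lie}(G)$ itself. Therefore $\mathfrak m=\operatorname{Lie}(G)$, and by Yamabe's Theorem the corresponding analytic Lie subgroup $Y_C^G(F')$ equals $G^\circ$.

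Finally, having produced a finite set $F'\in\operatorname{Fin}(G)$ with $Y_C^G(F')=G^\circ$, I invoke the reverse implication of Proposition~\ref{BaireLemma0}: this says precisely that $C$ is spacious in $G$, which is the desired conclusion.

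The one point that deserves care --- and which I expect to be the main (though mild) obstacle --- is the assertion that running the Baire--Yamabe Process in $G$ from the set $F\subseteq H$ reproduces $H^\circ$ rather than something smaller: one must check that the path component $P$ of the identity in $CC^{-1}$ is the same set whether the ambient group is $H$ or $G$, and that the iterated products $X_C(F)^{\cdot n}$ are computed identically in both. Since $C\subseteq H$ and $H$ is a closed (hence topologically embedded) subgroup of $G$, the subspace topology on $H$ agrees in both pictures, so $CC^{-1}$ and its path component $P$ and all the conjugates $gPg^{-1}$ with $g\in F\subseteq H$ coincide, and the group they generate inside $G$ coincides with the group they generate inside $H$, namely $Y_C^H(F)=H^\circ$. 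A second minor point: the statement as typeset says ``$H^\circ\to\operatorname{Lie}(G)$ is nontrivial and irreducible'' where clearly ``$H^\circ\to\operatorname{GL}(V)$'' is meant, matching the hypotheses of Lemma~\ref{LieSemidirectLemma}; I read it that way throughout. Everything else is a direct chaining of the cited results.
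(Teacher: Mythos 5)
Your proposal is correct and follows essentially the same route as the paper: obtain a finite set with $Y_C=H^\circ$ in $H$ via Proposition~\ref{BaireLemma0}, observe this persists in $G$, pass to a maximal element of the Baire--Yamabe Process in $G$, use Proposition~\ref{BYP} to see its Lie algebra is an ideal containing $\operatorname{Lie}(H)$, conclude it is all of $\operatorname{Lie}(G)$, and apply Proposition~\ref{BaireLemma0} again. The only (harmless) deviation is that you finish via parts (1) and (2) of Lemma~\ref{LieSemidirectLemma} (not an ideal, hence strictly larger; maximal subalgebra, hence everything), whereas the paper cites part (3), i.e.\ perfectness of $\operatorname{Lie}(G)$; you also correctly read the typo ``$H^\circ\to\operatorname{Lie}(G)$'' as ``$H^\circ\to\operatorname{GL}(V)$''.
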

\begin{proof}
We consider first the Baire--Yamabe Process $Y_C$ in $H$.
By Lemma~\ref{BaireLemma0} there is a finite set $E\in \operatorname{Fin}(H)$ with
\[Y_C(E)=H^\circ.\] 
%
Now we consider the Baire--Yamabe Process $Y_C$ in $G$. 
Let $Y_C(F)\subseteq G$ be a maximal element in $\operatorname{im}(Y_C)$.
Thus $Y_C(F)=Y_C(E\cup F)\supseteq Y_C(E)$. 
Moreover, $\operatorname{Lie}(Y_C(F))$ is by Proposition~\ref{BYP} an ideal in $\operatorname{Lie}(G)$. 
Therefore $\operatorname{Lie}(Y_C(F))=\operatorname{Lie}(G)$ by Lemma~\ref{LieSemidirectLemma}(3),
and thus $Y_C(F)=G^\circ$.
Again by Lemma~\ref{BaireLemma0}, the compact set $C$ is spacious in $G$.
\end{proof}

For the next theorem we first recall Schur's Lemma.
If $M\subseteq \operatorname{GL}(V)$ is a subgroup acting irreducibly on the finite dimensional
real vector space $V$,
then the ring $\mathbb D=\operatorname{End}_M(V)$ of all endomorphisms of $V$ that
commute with the $M$-action is a finite dimensional division ring over $\mathbb R$, and hence
\[\mathbb D\in\{\mathbb R,\mathbb C,\mathbb H\},\] where $\mathbb H$ denotes the 
division ring of real quaternions. The vector space $V$ then becomes a right
$\mathbb D$-module, and $M$ acts as a group of $\mathbb D$-linear endomorphisms.

Suppose that $V$ is a finite dimensional right $\mathbb D$-module, for
$\mathbb D\in\{\mathbb R,\mathbb C,\mathbb H\}$.
Let $|\cdot|$ denote the standard norm on $\mathbb D$ and let 
\[U=\{u\in \mathbb D^*\mid |u|=1\}\] denote the compact group of norm 1
elements in $\mathbb D^*$.
Every element $a\in\mathbb D$ has a (unique) presentation in polar coordinates as $a=ut$, with
$u\in U$ and $t=|a|$. Put
\[
\operatorname{SL}_{\mathbb D}(V)=[\operatorname{GL}_{\mathbb D}(V),\operatorname{GL}_{\mathbb D}(V)].
\]
Then there is a short exact sequence of linear Lie groups
\[
1\longrightarrow\operatorname{SL}_{\mathbb D}(V)\xhookrightarrow{\ \ \ \ }
\operatorname{GL}_{\mathbb D}(V)\xrightarrow{\ \det_{\mathbb D}\ }K_1(\mathbb D)\longrightarrow 1.
\]
For the commutative fields $\mathbb D=\mathbb R$ and $\mathbb D=\mathbb C$, the range of
$\det_{\mathbb D}$ is $K_1(\mathbb D)=\mathbb D^*$ and $\det_{\mathbb D}$ is the usual determinant. 
In the quaternionic case $K_1(\mathbb H)=\mathbb R_{>0}$ is the multiplicative groups of
the positive reals
and $\det_{\mathbb{H}}$ is the Dieudonn\'e determinant,
which is given by 
\[\det\nolimits_{\mathbb H}=|\det\nolimits_{\mathbb C}|,\]
where $V$ is viewed as as right $\mathbb C$-module.

If $V$ is a finite dimensional right $\mathbb D$-module, we may view $\mathbb D^*$ as a closed subgroup
of $\operatorname{GL}_{\mathbb R}(V)$, by identifying $a\in \mathbb D^*$ with the linear
map $v\longmapsto va$.
Then the product
\[\operatorname{SL}_{\mathbb D}(V)\,\mathbb D^*=\operatorname{GL}_{\mathbb D}(V)\,\mathbb D^*\]
is a subgroup of $\operatorname{GL}_{\mathbb R}(V)$, which is closed by the following lemma.
\begin{Lem}
\label{ProductClosed}
Let $V$ be a finite dimensional right $\mathbb D$-module, for 
$\mathbb D\in\{\mathbb R,\mathbb C,\mathbb H\}$.
Let $M\subseteq \operatorname{SL}_{\mathbb D}(V)$ and  
$L\subseteq \mathbb D^*$ be closed subgroups. Then $ML\subseteq\operatorname{GL}_{\mathbb R}(V)$
is a closed subgroup.
\end{Lem}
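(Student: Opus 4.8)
The plan is to reduce the statement to the commutative case $\mathbb{D}\in\{\mathbb{R},\mathbb{C}\}$, where $M L$ can be analysed via the ordinary determinant, and then to handle the quaternionic case by passing to the underlying complex module. First I would observe that $M$ is a closed subgroup of the \emph{closed} subgroup $\operatorname{SL}_{\mathbb D}(V)$ of $\operatorname{GL}_{\mathbb R}(V)$, and likewise $L$ is closed in the closed subgroup $\mathbb{D}^*$; so it suffices to show that the product of these two particular closed subgroups is closed, and then that the product of \emph{arbitrary} closed subgroups $M\subseteq\operatorname{SL}_{\mathbb D}(V)$ and $L\subseteq\mathbb{D}^*$ is closed. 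The key structural fact is that $\operatorname{SL}_{\mathbb D}(V)$ is exactly the kernel of the (continuous, surjective) homomorphism $\det_{\mathbb D}:\operatorname{GL}_{\mathbb D}(V)\longrightarrow K_1(\mathbb D)$ recalled just above, and that $\det_{\mathbb D}$ restricted to the scalar subgroup $\mathbb{D}^*$ is (up to the identification $a\mapsto(v\mapsto va)$) the map $a\longmapsto a^{\dim_{\mathbb D}V}$ in the commutative case, resp. $a\longmapsto |a|^{2\dim_{\mathbb H}V}$ in the quaternionic case. In all cases this map $\mathbb{D}^*\longrightarrow K_1(\mathbb D)$ is a \emph{proper} map onto an open-closed subgroup of finite index, with compact kernel $U\cap\mathbb{D}^*$ (a finite group of roots of unity in the real and complex cases, and all of $U$ in the quaternionic case, where it is $\operatorname{Sp}(1)$).

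Granting this, the argument runs as follows. Since $\operatorname{SL}_{\mathbb D}(V)=\ker(\det_{\mathbb D})$ and $M\subseteq\operatorname{SL}_{\mathbb D}(V)$, an element $g=ma\in\operatorname{GL}_{\mathbb R}(V)$ with $m\in M$, $a\in L$ satisfies $\det_{\mathbb D}(g)=\det_{\mathbb D}(a)$, so $ML$ is contained in $\det_{\mathbb D}^{-1}(\det_{\mathbb D}(L))$. I would then argue that $ML$ is a subgroup (using that $L$ is central in $\operatorname{GL}_{\mathbb D}(V)$ — scalars commute with everything $\mathbb{D}$-linear — and $M$ is normal in $\operatorname{SL}_{\mathbb D}(V)$, indeed $ML=LM$). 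To prove closedness, take a sequence $g_k=m_k a_k\to g$ in $\operatorname{GL}_{\mathbb R}(V)$. Applying the continuous map $\det_{\mathbb D}$ gives $\det_{\mathbb D}(a_k)\to\det_{\mathbb D}(g)$; since $\det_{\mathbb D}|_{\mathbb D^*}$ is proper and $L$ is closed in $\mathbb{D}^*$, the $a_k$ (after adjusting by elements of the \emph{compact} group $\ker(\det_{\mathbb D})\cap\mathbb{D}^*$ and passing to a subsequence) converge to some $a\in L$. Then $m_k=g_k a_k^{-1}\to g a^{-1}$, and since $M$ is closed, $ga^{-1}\in M$, so $g=(ga^{-1})a\in ML$. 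The harmless bookkeeping is that "adjusting by a compact group" step: $a_k=u_k t_k$ in polar coordinates, $t_k=|a_k|$ is controlled because $\det_{\mathbb D}(a_k)$ converges away from $0$ and $\infty$, and $u_k$ lives in the compact group $U$, so a subsequence of $(u_k,t_k)$ converges; one absorbs the limit correctly using that $U\cap\mathbb D^*$-multiples of $L$ are still handled because $L$ is a \emph{group} — actually one must be slightly careful here and I would instead simply note $\det_{\mathbb D}^{-1}(\det_{\mathbb D}(L))\cap\mathbb{D}^* = L\cdot(\ker\det_{\mathbb D}\cap\mathbb D^*)$ is a closed subgroup of $\mathbb D^*$ (closed since it is the preimage under a proper map of the closed set $\det_{\mathbb D}(L)$, the latter closed because $\det_{\mathbb D}$ is proper and $L$ closed), and replace $L$ by it from the start; this makes the convergence argument above go through cleanly.

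I expect the main obstacle to be the quaternionic case, where $\operatorname{SL}_{\mathbb H}(V)$ is the kernel of the Dieudonn\'e determinant $\det_{\mathbb H}=|\det_{\mathbb C}|$, so that $\det_{\mathbb H}$ has \emph{non-compact-cokernel-free} but larger kernel intersection with $\mathbb H^*$ — namely $\ker(\det_{\mathbb H})\cap\mathbb H^*=\operatorname{Sp}(1)$, the full norm-one group, which is compact, so in fact this is favourable. The genuine subtlety is rather that $\operatorname{SL}_{\mathbb H}(V)$ as I defined it (the commutator subgroup of $\operatorname{GL}_{\mathbb H}(V)$) must be identified with $\ker(\det_{\mathbb H})$; this is a standard fact about the Dieudonn\'e determinant but needs to be cited rather than reproved. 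Once that identification is in hand, the properness argument for $\det_{\mathbb H}|_{\mathbb H^*}:\ a\mapsto |a|^{2\dim_{\mathbb H}V}$ onto $\mathbb{R}_{>0}$ is immediate (it is a proper surjective homomorphism with compact kernel), and the closed-subgroup conclusion follows exactly as in the commutative case. So the proof is: (i) recall $\operatorname{SL}_{\mathbb D}(V)=\ker(\det_{\mathbb D})$; (ii) note $\mathbb{D}^*$ is central in $\operatorname{GL}_{\mathbb D}(V)$, hence $ML=LM$ is a subgroup; (iii) enlarge $L$ to the closed subgroup $\widetilde L=L\cdot(\ker\det_{\mathbb D}\cap\mathbb D^*)$ of $\mathbb D^*$, using properness of $\det_{\mathbb D}|_{\mathbb D^*}$; (iv) for a convergent sequence in $ML=M\widetilde L$, push forward by $\det_{\mathbb D}$, extract a convergent subsequence of the scalar parts using properness, and conclude using closedness of $M$.
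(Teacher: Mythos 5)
Your overall route---control the scalar factor by a determinant-type homomorphism, use properness/compactness to extract a convergent subsequence of the scalar parts, and finish with closedness of $M$ and $L$---is the same as the paper's, which writes each scalar in polar form $tu$ and works with the continuous homomorphism $\nu(stu)=|\det_{\mathbb D}(st)|=|t|^r$ on $\operatorname{SL}_{\mathbb D}(V)\mathbb D^*$. However, as written your argument has a gap exactly in the case you flagged as subtle, and for a different reason than the one you give. For $\mathbb D=\mathbb H$ the scalars act by \emph{right} multiplication $v\mapsto va$, and such a map is $\mathbb H$-linear only when $a$ is real; hence $\mathbb H^*\not\subseteq\operatorname{GL}_{\mathbb H}(V)$, and the elements $a_k\in L$, the products $m_ka_k$, and the limit $g$ are in general not $\mathbb H$-linear, so the step ``$\det_{\mathbb D}(a_k)\to\det_{\mathbb D}(g)$'' is not meaningful: the Dieudonn\'e determinant is simply not defined on these elements. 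What is needed---and this is the actual content of the paper's proof---is a continuous homomorphism on the product group $\operatorname{SL}_{\mathbb D}(V)\mathbb D^*$ (or on all of $\operatorname{GL}_{\mathbb R}(V)$) that is trivial on $\operatorname{SL}_{\mathbb D}(V)$ and proper modulo a compact group on the scalars; the paper's $\nu$ does this, and so does $|\det_{\mathbb R}|$, which is continuous on $\operatorname{GL}_{\mathbb R}(V)$, equals $1$ on $\operatorname{SL}_{\mathbb D}(V)$ and on $U$, and equals $|a|^{\dim_{\mathbb R}V}$ on the scalar $a$. By contrast, the identification $\operatorname{SL}_{\mathbb H}(V)=\ker(\det_{\mathbb H})$, which you single out as the main obstacle, is already part of the exact sequence displayed before the lemma. (Also, your parenthetical ``$M$ is normal in $\operatorname{SL}_{\mathbb D}(V)$'' is false in general but harmless: $ML=LM$ follows from the fact that scalars commute with all $\mathbb D$-linear maps.)

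Second, the manoeuvre of replacing $L$ by $\widetilde L=L\cdot(\ker\det_{\mathbb D}\cap\mathbb D^*)$ is both incorrect and unnecessary. The equality $ML=M\widetilde L$ invoked in your step (iv) fails in general: take $M=L=\{1\}$, where $ML=\{1\}$ while $M\widetilde L$ is the group of roots of unity, respectively $\operatorname{Sp}(1)$. Moreover, closedness of the larger group $M\widetilde L$ would not imply closedness of the subgroup $ML$, since a closed group may contain dense proper subgroups, even of finite index (compare Example~\ref{Ex3}). Fortunately no adjustment by $\ker\det_{\mathbb D}\cap\mathbb D^*$ is needed: once the correctly defined norm homomorphism converges on the sequence $g_k=m_ka_k$, the scalars $a_k$ lie in a compact subset of $\mathbb D^*$, a subsequence converges to some $a$, and $a\in L$ because $L$ itself is closed in $\mathbb D^*$; then $m_k=g_ka_k^{-1}\to ga^{-1}\in M$, so $g\in ML$. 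With these two repairs your argument coincides with the paper's proof.
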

\begin{proof}
We use the notation we have set up above.
Put $r=\dim_{\mathbb D}(V)$ and $Z=\operatorname{GL}_{\mathbb D}(V)\cap U$.
Then $|\det_{\mathbb D}(z)|=1$ for all $z\in Z$.
Hence the continuous homomorphism \[\operatorname{GL}_{\mathbb D}(V)\times U\longrightarrow\mathbb{R}_{>0}\]
that maps $(h,u)$ to $|\det\nolimits_{\mathbb D}(h)|$ factors over
the projection map \[\operatorname{GL}_{\mathbb D}(V)\times U\longrightarrow
\operatorname{GL}_{\mathbb D}(V)U=\operatorname{GL}_{\mathbb D}(V)\,\mathbb{D}^*.\]
Hence we have a well-defined continuous homomorphism 
\[\nu:\operatorname{GL}_{\mathbb D}(V)U\longrightarrow\mathbb R_{>0}\] 
given by
\[
\nu(hu)=|\det\nolimits_{\mathbb D}(h)|,
\]
for $h\in\operatorname{GL}_{\mathbb D}(V)$ and $u\in U$.
We note also that $\operatorname{GL}_{\mathbb D}(V)U$ is closed in $\operatorname{GL}_\RR(V)$
because $\operatorname{GL}_{\mathbb D}(V)$ is closed in $\operatorname{GL}_\RR(V)$ and $U$ is compact.

Now let $(g_k)_{k\in\mathbb N}$ be a sequence in $ML$ converging to an element 
$g\in\operatorname{GL}_{\mathbb D}(V)U$. We express every $g_k$ as a product
$g_k=s_kt_ku_k$, with $s_k\in M$, $t_k\in\mathbb R_{>0}$ and $u_k\in U$.
Then $\nu(g)=\lim_k\nu(g_k)=\lim_k|t_k|^m$. Passing to a subsequence, we may 
therefore assume that
the bounded sequences $t_k$ and $u_k$ converge to elements $t$ and $u$, respectively.
Thus $\lim_kt_ku_k=tu\in L$.
Hence \[s=\lim\nolimits_ks_k=\lim\nolimits_kg_ku_k^{-1}t_k^{-1}\] exists and is equal to $gu^{-1}t^{-1}$.
Since $M$ was assumed to be closed, $s\in M$ and hence $g=stu\in ML$.
\end{proof}
\begin{Thm}\label{SemidirectTheorem}
Let $V$ be a finite dimensional real vector space and
let $M\subseteq\operatorname{GL}(V)$ be a closed  subgroup, with $M/M^\circ$ 
finite.
Assume that $\operatorname{Lie}(M)$ is a direct 
sum of absolutely simple ideals and that $M^\circ$ acts irreducibly on $V$.
Let $L\subseteq \operatorname{End}_M(V)^*=\mathbb D^*$ be a closed nontrivial subgroup.
Let $\rho$ denote the representation of $ML$ on $V$.
Then $ML\subseteq\operatorname{GL}_{\mathbb R}(V)$ is closed and 
$G=V\rtimes_\rho ML$ is rigid within every almost Polish class $\mathbfcal K$
containing $G$.
\end{Thm}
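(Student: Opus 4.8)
The plan is to verify, for an arbitrary short exact sequence $1\longrightarrow N\hookrightarrow K\xrightarrow{\ \phi\ }G\longrightarrow 1$ with $(K,N)\in\mathbfcal K_a$, the hypothesis of Theorem~\ref{ContinuousOpenTheorem}: for every identity neighbourhood $U\subseteq G$ I must produce a smaller one $V\subseteq U$ with $\phi^{-1}(V)$ a $\mathbfcal K$-analytic subset of $K$. First the structure. By Schur's Lemma $\mathbb D=\operatorname{End}_M(V)\in\{\mathbb R,\mathbb C,\mathbb H\}$ and $M\subseteq\operatorname{GL}_{\mathbb D}(V)$; since $\operatorname{Lie}(M)$ is semisimple it is perfect, so $\operatorname{Lie}(M)=[\operatorname{Lie}(M),\operatorname{Lie}(M)]\subseteq\operatorname{Lie}(\operatorname{SL}_{\mathbb D}(V))$ and hence $M^\circ\subseteq\operatorname{SL}_{\mathbb D}(V)$. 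As $M=M\cdot\{1\}$ is closed and $M^\circ L$ is closed by Lemma~\ref{ProductClosed}, and $ML$ is a finite union of translates of the normal subgroup $M^\circ L$, the group $ML$ is closed in $\operatorname{GL}_{\mathbb R}(V)$; thus $G=V\rtimes_\rho ML$ is a Lie group, which lies in $\mathbfcal K$ by hypothesis. Writing $G_1=V\rtimes_\rho M$ (a closed subgroup of $G$), Lemma~\ref{LieSemidirectLemma}(3) shows $\operatorname{Lie}(G_1)$ is perfect, using that $M^\circ$ acts nontrivially on $V$ — and so $V^{M^\circ}=0$ — which holds because $\operatorname{Lie}(M)\neq0$. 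However $\operatorname{Lie}(G)=\operatorname{Lie}(G_1)\oplus\operatorname{Lie}(L)$ as a vector space, and the scalar summand $\operatorname{Lie}(L)\subseteq\operatorname{Lie}(\mathbb D^*)$ is in general not contained in the derived algebra, so $\operatorname{Lie}(G)$ need not be perfect and Theorem~\ref{MainThm} does not apply to $G$ directly. The required $V$ will be built as a product of a ``semisimple'' piece inside $G_1$ and a ``scalar'' piece transverse to $G_1$.

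For the semisimple piece: $M$ is a linear semisimple group, so $\operatorname{Cen}(M^\circ)$ is finite, and Proposition~\ref{SemisimpleProp} applied to $M$ gives $h$ and a finite $X_0\subseteq M^\circ$ such that $C=\{ghg^{-1}\mid g\in\operatorname{Cen}_M(X_0)\}$ is compact, spacious in $M$, and contains a nonconstant path. By Lemma~\ref{SemiLemma}, $C$ is spacious in $G_1$, and $\phi^{-1}(C)$ is $\mathbfcal K$-analytic by Lemma~\ref{LittleLemma}(4). Taking $D=g_1CC^{-1}\cdots g_rCC^{-1}$ a compact identity neighbourhood of $G_1$ and applying Theorem~\ref{vdW2} inside $G_1$, I obtain $1$-parameter subgroups $c_1,\dots,c_m$ of $G_1$ so that the sets $M_{t_1,\dots,t_m}=[c_1(t_1),D]\cdots[c_m(t_m),D]$, $0<t_i\leq1$, form a neighbourhood basis of $1$ in $G_1$, each with $\phi^{-1}(M_{t_1,\dots,t_m})$ $\mathbfcal K$-analytic by Lemma~\ref{LittleLemma}(1),(2). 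Intersecting with the closed subgroup $V$ and using that $\phi^{-1}(V)$ is $\mathbfcal K$-analytic — indeed $V=\operatorname{Cen}_G(\{w_1,\dots,w_k\})$ for any $\mathbb R$-basis $w_1,\dots,w_k$ of $V$ since $\rho$ is faithful, so Lemma~\ref{LittleLemma}(4) applies — I get, via Lemma~\ref{Pullback}, a neighbourhood basis $\{W_0\}$ of $0$ in $V$ consisting of sets whose $\phi$-preimage is $\mathbfcal K$-analytic.

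For the scalar piece: choose a finite $X\subseteq M^\circ$ with $\operatorname{Cen}_M(X)=\operatorname{Cen}(M)$ (possible since $M$ has a finitely generated dense subgroup, by Lemma~\ref{tfg}) and with $\bigcap_{x\in X}\ker(\rho(x)-\operatorname{id})=0$ (possible since $V^{M^\circ}=0$). A direct computation in $V\rtimes_\rho ML$ gives $\operatorname{Cen}_G(X)=\operatorname{Cen}(M)\cdot L$, with Lie algebra $\operatorname{Lie}(L)$; hence $\operatorname{Lie}(G_1)\cap\operatorname{Lie}(\operatorname{Cen}_G(X))=0$, their sum is $\operatorname{Lie}(G)$, and $G_1\cap\operatorname{Cen}_G(X)$ is finite, so the multiplication $\mu\colon G_1\times\operatorname{Cen}_G(X)\longrightarrow G$ is a local homeomorphism near $(1,1)$ and is surjective ($G=G_1L=G_1\operatorname{Cen}_G(X)$). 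Fix an $\mathbb R$-basis $v_1,\dots,v_k$ of $V$ and, for each $W_0$ from the previous step, set
\[
 B_{W_0}=\operatorname{Cen}_G(X)\ \cap\ \bigcap_{i=1}^{k}\Bigl(\{\,s\in G\mid[s,v_i]\in W_0\,\}\ \cap\ \{\,s\in G\mid[s^{-1},v_i]\in W_0\,\}\Bigr).
\]
Since $[s,v_i]=(\rho(s)v_i-v_i,0)$ depends continuously on $s$ and lies in $V$, each $B_{W_0}$ is an identity neighbourhood in $\operatorname{Cen}_G(X)$, and $\phi^{-1}(B_{W_0})$ is $\mathbfcal K$-analytic by Lemma~\ref{LittleLemma}(3) together with Lemma~\ref{Pullback} (translates and inverses of $\mathbfcal K$-analytic sets being $\mathbfcal K$-analytic, and $\phi^{-1}(W_0)$ being $\mathbfcal K$-analytic). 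Because $v_1,\dots,v_k$ span $V$ and $\rho$ is faithful, $\bigcap_{W_0}\overline{B_{W_0}}=\{1\}$, and the two-sided conditions keep both $\rho(s)$ and $\rho(s)^{-1}$ bounded, so each $B_{W_0}$ has compact closure in $\operatorname{Cen}_G(X)$; a standard compactness argument then shows $\{B_{W_0}\}$ is a neighbourhood basis of $1$ in $\operatorname{Cen}_G(X)$.

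Now the pieces assemble. Given $U\ni1$ in $G$, the local homeomorphism $\mu$ gives identity neighbourhoods $A\subseteq G_1$ and $B\subseteq\operatorname{Cen}_G(X)$ with $AB\subseteq U$; choose $M_{t_1,\dots,t_m}\subseteq A$ and $B_{W_0}\subseteq B$. Then $V:=M_{t_1,\dots,t_m}\cdot B_{W_0}\subseteq U$ is an identity neighbourhood and $\phi^{-1}(V)=\phi^{-1}(M_{t_1,\dots,t_m})\cdot\phi^{-1}(B_{W_0})$ (compute with a set-theoretic section as in the proof of Lemma~\ref{LittleLemma}(1)) is a product of two $\mathbfcal K$-analytic subsets of $K$, hence $\mathbfcal K$-analytic. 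By Theorem~\ref{ContinuousOpenTheorem}, $\phi$ is continuous, and being surjective it is open; so $G$ is $\mathbfcal K$-rigid. The main obstacle is exactly the one flagged in the first paragraph: $\operatorname{Lie}(G)$ need not be perfect, so identity neighbourhoods along the scalar directions of $L\subseteq\mathbb D^*$ cannot be obtained by taking commutators inside $G$. The device that resolves this is that $L$ acts on $V$ through \emph{invertible} $\mathbb D$-scalars, so for a nonzero $v\in V$ the assignment $s\mapsto[s,v]=(\rho(s)v-v,0)$ behaves on $L$ like an affine chart; demanding the commutators $[s^{\pm1},v_i]$ to be small for a spanning family $v_1,\dots,v_k$ therefore carves out a genuine small neighbourhood of $1$ in $\operatorname{Cen}_G(X)\supseteq L$, and this is precisely a condition of the algebraic form controlled by Lemma~\ref{LittleLemma}(3). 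Verifying the boundedness and compactness claims and the identification $\operatorname{Cen}_G(X)=\operatorname{Cen}(M)L$ is routine; the degenerate case $\operatorname{Lie}(M)=0$ (where $M$ is finite and $\dim_{\mathbb R}V\leq1$) must be excluded or treated on its own.
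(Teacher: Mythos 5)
Your overall architecture is the right one, but there is a genuine gap at the crucial analyticity step, and it is exactly the point this theorem turns on. You claim that $\phi^{-1}(C)$ is $\mathbfcal K$-analytic ``by Lemma~\ref{LittleLemma}(4)'' for $C=\{ghg^{-1}\mid g\in\operatorname{Cen}_M(X_0)\}$. Lemma~\ref{LittleLemma} only controls preimages of sets defined through the group operations of the full quotient $H\cong G$: centralizers of finite subsets of $G$, products, commutators. Here $\operatorname{Cen}_M(X_0)$ is a centralizer computed inside the subgroup $M$, and nothing gives $\mathbfcal K$-analyticity of $\phi^{-1}(M)$, let alone of $\phi^{-1}(\operatorname{Cen}_M(X_0))$. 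Nor can you silently replace $\operatorname{Cen}_M(X_0)$ by $\operatorname{Cen}_G(X_0)$: that centralizer contains the translations by the fixed space $V^{X_0}$ (in the compact case Proposition~\ref{AbsolutelySimpleProp} even takes $X_0=\{1\}$, so $\operatorname{Cen}_G(X_0)=G$), and conjugating $h$ by $(u,1)$ with $u\in V^{X_0}$ produces elements $(u-hu,h)$; the resulting conjugate set is in general noncompact and is not your $C$. Since every later analyticity claim --- for $\phi^{-1}(D)$, $\phi^{-1}(M_{t_1,\ldots,t_m})$, $\phi^{-1}(W_0)$ and hence $\phi^{-1}(B_{W_0})$ --- feeds off this one, the proof as written does not go through.

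The missing idea is the paper's device of adjoining a nontrivial element $z\in L$ to the finite set. Since $z\in\operatorname{End}_M(V)^*$ commutes with $M$, this does not shrink $\operatorname{Cen}_M(X_0)$, while $z-1$ is a nonzero, hence invertible, element of the division algebra $\mathbb D$, so $z$ fixes no nonzero vector; one checks that $\operatorname{Cen}_G(X_0\cup\{z\})$ has trivial $V$-part and that $\{ghg^{-1}\mid g\in\operatorname{Cen}_G(X_0\cup\{z\})\}=C$, whereupon Lemma~\ref{LittleLemma}(4), together with products and inverses of $\mathbfcal K$-analytic sets, really does yield analyticity of $\phi^{-1}(C)$. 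Ironically, you make exactly this kind of adjustment in your scalar piece (demanding $\bigcap_{x\in X}\ker(\rho(x)-\operatorname{id})=0$), just not where it is indispensable. With that repair inserted, the remainder of your argument is sound and parallels the paper's proof: the paper likewise builds a van der Waerden neighborhood basis in $V\rtimes_\rho M$ and a transverse basis along $L$ via commutators with a fixed vector $(v_0,1)$, then multiplies the two; your transverse construction (two-sided commutator conditions against a spanning set of vectors, plus a compactness argument inside $\operatorname{Cen}_G(X)$) is a workable variant. Minor points: for $X\subseteq M^\circ$ generating a dense subgroup of $M^\circ$ one gets $\operatorname{Cen}_G(X)=\operatorname{Cen}_M(M^\circ)L$ rather than $\operatorname{Cen}(M)L$ (harmless, as it is still a finite extension of $L$), and the degenerate case $\operatorname{Lie}(M)=0$ is implicitly excluded by the paper's proof as well.
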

\begin{proof}
We view $V$ as a right $\mathbb D$-module, for $\mathbb D=\operatorname{End}_M(V)$.
The Lie algebra of $M$ is perfect. Therefore $M^\circ$ is contained 
in $\operatorname{SL}_{\mathbb D}(V)$. By Lemma~\ref{ProductClosed},
the group $M^\circ L$ is closed in $\operatorname{GL}_{\mathbb R}(V)$. 
Since $M/M^\circ$ is finite, $ML$ is also closed in $\operatorname{GL}_{\mathbb R}(V)$.

We write the elements of $G$ as pairs $(u,m\ell)$, with $u\in V$, $m\in M$ and $\ell\in L$.
Because of the presence of $L$, the Lie algebra of $G$ need not be perfect, so additional work is 
required.

In any case, $\operatorname{Lie}(V\rtimes_\rho M)$ is perfect by Lemma~\ref{LieSemidirectLemma}(3).
Our first aim is to construct a good neighborhood basis of the identity of
this subgroup of $G$.
Let $z\in L$ be a nontrivial element.
Since $z$ is not the identity in the 
division ring $\mathbb D$, it fixes no nonzero vector in $V$.
Then 
\[(u,m\ell)(0,z)=(u,m\ell z)\text{ and }(0,z)(u,m\ell)=(zu,zm\ell )=(zu,mz\ell),\] whence
\[\operatorname{Cen}_G(z)=M\cdot\operatorname{Cen}_L(z).\] Since $M^\circ$ is linear and semisimple, 
$\operatorname{Cen}(M^\circ)$ is finite,
see \cite[38.5.(3)]{FdV} or \cite[Corollary~13.2.6]{HilgertNeeb}.
By Proposition~\ref{SemisimpleProp} we can find an element $h$ and a finite set $X$ in
$M$ such that \[C=\{khk^{-1}\mid k\in\operatorname{Cen}_M(X)\}\] is compact and 
spacious in $M$. Since $L$ commutes with $M$, we may rewrite this set
as
\[
 C=\{ghg^{-1}\mid g\in\operatorname{Cen}_G(X\cup\{z\})\}.
\]
By Lemma~\ref{SemiLemma}, the set $C$ is also spacious in $V\rtimes_\rho M$.
Hence there exist elements ${g_1,\ldots,g_r\in V\rtimes_\rho M}$ such that
$D=g_1CC^{-1}\cdots g_rCC^{-1}$ is a compact identity neighborhood in
$V\rtimes_\rho M$.
By Lemma~\ref{LieSemidirectLemma}(3) and Theorem~\ref{vdW2}, there exist $1$-parameter 
groups $c_1,\ldots,c_m$ in $V\rtimes_\rho M$ such that the sets 
\[M_{1_1,\ldots,t_m}=[c_1(t_1),D]\cdots[c_m(t_m),D],\]
for $0<t_i\leq 1$ and $i=1,\ldots,m$, form a neighborhood basis of the identity in
$V\rtimes_\rho M$.

Now we construct a good neighborhood basis of the identity in $L$,
using the sets $M_{t_1,\ldots,t_m}\subseteq V\rtimes_\rho M$.
Let $X'\subseteq M$ be a finite set generating a dense subgroup. Since
$M/M^\circ$ is finite, such a set exists by Lemma~\ref{tfg}.
We have \[L=\operatorname{Cen}_G(M)=\operatorname{Cen}_G(X'),\] as is easily checked
from the multiplication rule $(**)$.
Recall that $|\cdot|$ denotes
the standard norm on the real division algebra $\mathbb D$.
We fix also a norm $||\cdot||$ on the right $\mathbb D$-module $V$, and a vector
$v_0\in V$ with $||v_0||=1$.
We put 
\begin{align*}
N_{t_1,\ldots,t_m}&=\{\ell\in L\mid [(v_0,1),(0,\ell)]\in M_{t_1,\ldots,t_m}\}\\
&=\{\ell \in \operatorname{Cen}_G(X')\mid [(v_0,1),(0,\ell)]\in M_{t_1,\ldots,t_m}\}.
\end{align*}
Since
\[
 [(v_0,1),(0,\ell)]=(v_0,1)(0,\ell)(-v_0,1)(0,\ell^{-1})=(v_0,\ell)(-v_0,\ell^{-1})=(v_0(1-\ell),1)
\]
and \[ ||v_0(\ell-1)||=|\ell-1|,\] 
we conclude that this family of sets is indeed a neighborhood basis of the 
identity in $L$, where $0<t_i\leq 1$ and $i=1,\ldots,m$.

We now combine the various pieces of this proof. The natural Lie group homomorphism
\[
 (V\rtimes_\rho M)\rtimes L\longrightarrow V\rtimes_\rho ML=G
\]
is continuous and open.
Given an identity neighborhood $W\subseteq G$, there exists therefore
an identity neighborhood $M_{t_1,\ldots,t_m}$ in $V\rtimes_\rho M$
and another identity neighborhood $N_{t_1',\ldots,t_m'}$ in $L$
such that the product $M_{t_1,\ldots,t_m}N_{t_1',\ldots,t_m'}\subseteq W$
is an identity neighborhood in $G$.

Suppose that $K$ is another group in the class $\mathbfcal K$
and that $\phi:K\longrightarrow G$ is an abstract surjective homomorphism
whose kernel $N$ is $\mathbfcal K$-analytic.
Then both $\phi^{-1}(M_{t_1,\ldots,t_m})$ and 
$\phi^{-1}(N_{t_1',\ldots,t_m'})$ are $\mathbfcal K$-analytic 
by Lemma~\ref{LittleLemma},
and so is
$\phi^{-1}(M_{t_1,\ldots,t_m}N_{t_1',\ldots,t_m'})$.
Hence $\phi$ is continuous and open by Theorem~\ref{ContinuousOpenTheorem}.
\end{proof}
The next result generalizes some of the results in \cite{Kal16}.
\begin{Thm}\label{ClassicalGroups}
The following Lie groups are rigid in every almost Polish class $\mathbfcal K$ containing them:
\begin{align*}
 \mathbb{R}^n\rtimes_\rho\operatorname{O}(n)&\qquad n\geq 3,\\
 \mathbb{R}^{n}\rtimes_\rho\operatorname{SO}(n)&\qquad n\geq 4,\\
 \mathbb{C}^n\rtimes_\rho\operatorname{SU}(n)&\qquad n\geq 2,\\
 \mathbb{H}^n\rtimes_\rho\operatorname{Sp}(n)&\qquad n\geq 1,\\
 \mathbb{R}^{2n}\rtimes_\rho\operatorname{Sp}_{2n}(\mathbb R)&\qquad n\geq 1,\\
 \mathbb{R}^{n}\rtimes_\rho\operatorname{SL}_{n}(\mathbb R)&\qquad n\geq 2,\\
 \mathbb{R}^{n}\rtimes_\rho\operatorname{GL}_{n}(\mathbb R)&\qquad n\geq 2.
\end{align*}
In all cases, $\rho$ denotes the natural representation.
\end{Thm}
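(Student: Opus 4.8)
The plan is to verify in each of the seven cases that the listed group falls under one of the rigidity theorems already established, most notably Theorem~\ref{SemidirectTheorem}, and in the remaining cases under Theorem~\ref{SemisimpleThm} or a direct application of Theorem~\ref{MainThm}. For every group of the shape $V\rtimes_\rho ML$ appearing here, one must identify $M\subseteq\operatorname{GL}(V)$, check that $\operatorname{Lie}(M)$ is a direct sum of absolutely simple ideals, check that $M^\circ$ acts irreducibly on $V$, compute the commuting division ring $\mathbb D=\operatorname{End}_M(V)$, and exhibit $L\subseteq\mathbb D^*$ as the appropriate closed subgroup. Then Theorem~\ref{SemidirectTheorem} applies verbatim.

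Concretely, I would organize the verification by the division ring. First, the orthogonal and unitary and symplectic compact families: for $\mathbb R^n\rtimes_\rho\operatorname{O}(n)$ with $n\ge 3$ take $M=\operatorname{SO}(n)$, whose Lie algebra $\mathfrak{so}(n)$ is absolutely simple for $n\ge 3$ (with the customary caveat that $\mathfrak{so}(4)$ is a sum of two absolutely simple ideals, which is still admissible); here $\operatorname{End}_M(\mathbb R^n)=\mathbb R$, so $\mathbb D^*=\mathbb R^*$, and $L=\{\pm 1\}$ is the nontrivial closed subgroup giving $ML=\operatorname{O}(n)$. For $\mathbb R^n\rtimes_\rho\operatorname{SO}(n)$ with $n\ge 4$ one has $M=\operatorname{SO}(n)$ already, but since $ML$ must be $M$ itself we instead invoke Lemma~\ref{SemiLemma} together with Proposition~\ref{SemisimpleProp} and Theorem~\ref{MainThm} directly (the $n\ge 4$ restriction excludes $\mathfrak{so}(3)$, whose natural module is the adjoint module and behaves differently, and $\mathfrak{so}(2)$, which is abelian). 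For $\mathbb C^n\rtimes_\rho\operatorname{SU}(n)$ take $M=\operatorname{SU}(n)$ acting on $\mathbb C^n$; then $\operatorname{End}_M(\mathbb C^n)=\mathbb C$, so $\mathbb D^*=\mathbb C^*$, and $L=\operatorname{U}(1)$ gives $ML=\operatorname{U}(n)$, whose image in $\operatorname{GL}(\mathbb C^n)$ has the claimed semidirect product as its real form. For $\mathbb H^n\rtimes_\rho\operatorname{Sp}(n)$ take $M=\operatorname{Sp}(n)$ acting on $\mathbb H^n$; here $\operatorname{End}_M(\mathbb H^n)=\mathbb H$ and $L=\operatorname{Sp}(1)$ is the nontrivial closed subgroup of $\mathbb H^*$, and $ML$ is the full group of quaternion-linear isometries scaled by norm-one quaternions.

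For the non-compact classical cases I would argue as follows. For $\mathbb R^{2n}\rtimes_\rho\operatorname{Sp}_{2n}(\mathbb R)$ take $M=\operatorname{Sp}_{2n}(\mathbb R)$, whose Lie algebra $\mathfrak{sp}_{2n}(\mathbb R)$ is absolutely simple and acts irreducibly on $\mathbb R^{2n}$, with $\operatorname{End}_M(\mathbb R^{2n})=\mathbb R$; since $\mathbb R^*$ contains only $L=\{\pm 1\}$ and $-1\in\operatorname{Sp}_{2n}(\mathbb R)$ already, we have $ML=M$ and again reduce to Proposition~\ref{SemisimpleProp}, Lemma~\ref{SemiLemma}, and Theorem~\ref{MainThm}. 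For $\mathbb R^n\rtimes_\rho\operatorname{SL}_n(\mathbb R)$ with $n\ge 2$ take $M=\operatorname{SL}_n(\mathbb R)$, absolutely simple with irreducible natural action, $\operatorname{End}_M(\mathbb R^n)=\mathbb R$, so again $ML=M$ via $\{\pm 1\}\subseteq M$ and we use Theorem~\ref{MainThm}. For $\mathbb R^n\rtimes_\rho\operatorname{GL}_n(\mathbb R)$ with $n\ge 2$ we finally use the full force of Theorem~\ref{SemidirectTheorem}: $M=\operatorname{SL}_n(\mathbb R)$, $\operatorname{End}_M(\mathbb R^n)=\mathbb R$, $\mathbb D^*=\mathbb R^*$, and $L=\mathbb R^*$ is the nontrivial closed subgroup, so $ML=\operatorname{SL}_n(\mathbb R)\cdot\mathbb R^*=\operatorname{GL}_n(\mathbb R)$ (using that scalars together with $\operatorname{SL}_n$ generate $\operatorname{GL}_n$, which holds because the determinant hits all of $\mathbb R^*$ on scalars when $n$ is odd and because $-1$ lies in $\operatorname{SL}_n$ when $n$ is even).

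The only genuine obstacle is bookkeeping about small-rank coincidences and which closed subgroups of $\mathbb D^*$ actually produce the stated groups: one must be careful that $\mathfrak{so}(4)$ is semisimple-but-not-simple (so it is a \emph{sum} of absolutely simple ideals, which Theorem~\ref{SemidirectTheorem} does permit), that $\mathfrak{so}(3)$ and $\mathfrak{so}(2)$ are correctly excluded by the range of $n$, that $\operatorname{SU}(n)$ for $n\ge 2$ acts irreducibly over $\mathbb C$ with $\mathbb C$-commutant (true), and that in each ``$ML=M$'' case the group already listed really equals $V\rtimes_\rho M$ rather than requiring a proper extension by $L$. None of this is deep; the substantive content has already been absorbed into Theorem~\ref{SemidirectTheorem}, Proposition~\ref{SemisimpleProp}, Lemma~\ref{SemiLemma}, and Theorem~\ref{MainThm}, and the proof of Theorem~\ref{ClassicalGroups} is a case-by-case citation of these with the representation-theoretic data filled in.
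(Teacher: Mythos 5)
Your plan correctly identifies the even-dimensional and quaternionic/complex/symplectic cases as applications of Theorem~\ref{SemidirectTheorem} (modulo some slips in identifying $ML$ with the listed group: for $n$ even, $\operatorname{SO}(n)\cdot\{\pm1\}=\operatorname{SO}(n)\neq\operatorname{O}(n)$ and $\operatorname{SL}_n(\mathbb R)\cdot\mathbb R^{*}=\operatorname{GL}_n^{+}(\mathbb R)\neq\operatorname{GL}_n(\mathbb R)$, and $\operatorname{SU}(n)\cdot\operatorname{U}(1)=\operatorname{U}(n)$, $\operatorname{Sp}(n)\cdot\operatorname{Sp}(1)\supsetneq\operatorname{Sp}(n)$ are the wrong groups; these are repaired by taking $M$ disconnected, e.g. $M=\operatorname{O}(n)$ or $M=\{g:\det g=\pm1\}$, which Theorem~\ref{SemidirectTheorem} permits since only $M/M^{\circ}$ finite is required, or by taking $L$ inside the center of $M$, e.g. $L=\{\pm1\}$ or $L=\mu_n\subseteq\mathbb C^{*}$; note also that when $-1\in M$ you may still take $L=\{\pm1\}$ nontrivial with $ML=M$, so $\operatorname{Sp}_{2n}(\mathbb R)$ and the even cases need no detour at all). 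The genuinely hard cases are exactly $\mathbb R^{n}\rtimes_\rho\operatorname{SO}(n)$ with $n$ odd and $\mathbb R^{n}\rtimes_\rho\operatorname{SL}_n(\mathbb R)$ with $n$ odd, where no nontrivial closed scalar subgroup $L\subseteq\mathbb D^{*}=\mathbb R^{*}$ lies inside the listed group, and here your fallback has a genuine gap.

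For those cases you propose Proposition~\ref{SemisimpleProp} plus Lemma~\ref{SemiLemma} plus Theorem~\ref{MainThm}, but Theorem~\ref{MainThm} requires the preimage $\phi^{-1}(C)$ to be $\mathbfcal K$-analytic, and you never establish this. The only available mechanism is Lemma~\ref{LittleLemma}(4), which needs $C$ to be written as $\{ghg^{-1}\mid g\in\operatorname{Cen}_G(X')\}$ with the centralizer taken in the \emph{whole} group $G=V\rtimes_\rho H(n)$; Proposition~\ref{SemisimpleProp} only gives a centralizer in the Levi factor $H(n)$, and the preimage of the subgroup $H(n)$ under an abstract homomorphism is not controlled. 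The difficulty is real: for a finite set $X\subseteq H(n)$ one computes $\operatorname{Cen}_G(X)=V^{X}\rtimes\operatorname{Cen}_{H(n)}(X)$, where $V^{X}$ is the fixed subspace, so the $G$-conjugates of $h$ acquire unbounded translation parts (already for $X=\{1\}$ in the compact case the full $G$-conjugacy class of $h$ is noncompact). The paper's proof of Theorem~\ref{ClassicalGroups} is devoted precisely to this point: it introduces the block involution $z=\bigl(\begin{smallmatrix}-1&0\\ \ \,0&1\end{smallmatrix}\bigr)\in H(n)$ (possible since $n$ is odd), computes that $\operatorname{Cen}_G(z)$ consists of pairs $(w,g)$ with $w$ supported in the last coordinate and $g$ block diagonal, chooses $h$ and $X$ inside the subgroup $H(n-1)$ via Proposition~\ref{SemisimpleProp} (using $n\geq5$ in the orthogonal case so that $\operatorname{Lie}(H(n-1))$ is a sum of absolutely simple ideals), and verifies that $\{ghg^{-1}\mid g\in\operatorname{Cen}_G(\{z\}\cup X)\}$ equals the compact spacious set $C$ because $h$ fixes the residual translation directions; only then do Lemma~\ref{LittleLemma} and Theorem~\ref{MainThm} apply. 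Without an argument of this kind your proof does not go through for the odd $\operatorname{SO}(n)$ and $\operatorname{SL}_n(\mathbb R)$ cases.
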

Here $\operatorname{Sp}(n)$ denotes the quaternion unitary group acting on $\mathbb H^n$, and
$\operatorname{Sp}_{2n}(\mathbb R)$ denotes the group that leaves the standard symplectic
form on $\mathbb R^{2n}$ invariant.
This list of applications of Theorem~\ref{SemidirectTheorem} to classical groups is by no means complete.

The reader should keep in mind, though, that 
\[
\mathbb R^3\rtimes_{\rho}\operatorname{SO}(3)=
\operatorname{Lie}(\operatorname{SO}(3))\rtimes_{\operatorname{Ad}}\operatorname{SO}(3)=\operatorname{SO}_3(\mathbb R[\delta])
\] 
is not rigid by Example~\ref{Ex5}. Likewise, the Lie group $\operatorname{O}_3(\mathbb R[\delta])$ is not rigid. 
Note, however,
that for the natural representation $\rho:\operatorname{O}(3)\xhookrightarrow{\ \ \ }\operatorname{GL}_3(\mathbb R)$
we have
\[
\mathbb R^3\rtimes_\rho\operatorname O(3)\not\cong 
\operatorname{Lie}(\operatorname{O}(3))
\rtimes_{\operatorname{Ad}}\operatorname{O}(3)=\operatorname{O}_3(\mathbb R[\delta]),
\]
because $-1\in\operatorname{O}(3)$ acts via $\rho$ nontrivially on $\mathbb R^3$, whereas $-1$ acts trivially on 
$\operatorname{Lie}(\operatorname{O}(3))$ via $\operatorname{Ad}$.

\begin{proof}[Proof of Theorem~\ref{ClassicalGroups}]
The assumptions of Theorem~\ref{SemidirectTheorem} are satisfied with 
$L=\{\mathrm{id}_V,-\mathrm{id}_V\}$ for the cases $\mathbb{R}^n\rtimes_\rho\operatorname{O}(n)$ with
$n\geq 3$ and for $\mathbb{R}^{2m}\rtimes_\rho\operatorname{SO}(2m)$ with $m\geq 2$.
The groups $\operatorname{SU}(n)$ for $n\geq 2$, 
$\operatorname{Sp}(n)$ for $n\geq 1$, $\operatorname{Sp}_{2n}(\RR)$ for $n\geq 1$, and
$\operatorname{SL}_{2m}(\RR)$ for $m\geq 1$ have nontrivial centers $Z$, and we may also apply Theorem~\ref{SemidirectTheorem}
with $L=Z$. For $\mathbb{R}^{n}\rtimes_\rho\operatorname{GL}_{n}(\mathbb R)$ with $n\geq 2$ we
have $L=\RR^*$.

It remains to consider the cases of the groups
of $\RR^n\rtimes_\rho \operatorname{SO}(n)$, for $n\geq 5$ odd, and 
${\RR^n\rtimes_\rho\operatorname{SL}_n(\mathbb R)}$, for $n\geq 3$ odd.
Put $H(n)=\operatorname{SO}(n)$ or $H(n)=\operatorname{SL}_n(\mathbb R)$
and ${G=\mathbb R^n\rtimes_\rho H(n)}$, 
where $\rho:H(n)\xhookrightarrow{\ \ \ }\operatorname{GL}_n(\mathbb R)$ is the natural representation.
Suppose that $n$ is odd. We decompose the matrices in $H(n)$ into $2\times2$ block matrices
$\big(\begin{smallmatrix} a & b\\ c & d \end{smallmatrix}\big)$, with $a$ of size $(n-1)\times(n-1)$.
Put $z=\big(\begin{smallmatrix} -1 & 0\\ \ \,0 & 1\end{smallmatrix}\big)\in H(n)$.
The $G$-centralizer of $z$
consists of all pairs $(w,g)$, where 
$w\in\mathbb R^n$ is a vector whose first $n-1$
entries are $0$, and $g=\big(\begin{smallmatrix} a & 0\\ 0 & d\end{smallmatrix}\big)$.
Put 
\[L=\big\{ \big(\begin{smallmatrix} a & 0\\ 0 & d\end{smallmatrix}\big)\in H(n)\big\}.\]
Since $n$ is odd, $d^{-1/n}$ exists in $\mathbb R$, and $\det(d^{-1/n} a)=1$.
The group $H(n-1)$ injects into $H(n)$ as the group of block diagonal matrices
$b=\big(\begin{smallmatrix} b & 0\\ 0 & 1\end{smallmatrix}\big)$. Every element in $L$ is thus a product
of a matrix in $H(n-1)$ and a diagonal matrix of the form 
$\big(
\begin{smallmatrix} 
d^{-1/n}  & 0\\ 0 & d 
\end{smallmatrix}\big)$.
By our assumptions on $n$, the group $H(n-1)$ has a
semisimple Lie algebra whose simple ideals are absolutely simple.
Note that here the assumption $n\geq 5$ enters in the orthogonal case.
Now we choose $h$ and $X$ in $H(n-1)$ as in Proposition~\ref{SemisimpleProp}.
Then 
\begin{align*}
C &=\{ghg^{-1}\mid g\in\operatorname{Cen}_{H(n-1)}(X)\}\\
&=\{ghg^{-1}\mid g\in\operatorname{Cen}_{L}(X)\}\\
&=\{ghg^{-1}\mid g\in\operatorname{Cen}_{H(n)}(\{z\}\cup X)\}\\
&=\{ghg^{-1}\mid g\in\operatorname{Cen}_{G}(\{z\}\cup X)\}.
\end{align*}
The last equality follows since $h$, being an element of $H(n-1)$, fixes every vector $w$
whose first $n-1$ coordinates are $0$.
The set $C$ is compact and contains a nonconstant path.
Since $\operatorname{Lie}(H(n))$ is absolutely simple, $C$ is spacious in $H(n)$ by
Proposition~\ref{AbsolutelySimpleProp}.
By Lemma~\ref{SemiLemma}, the set $C$ is spacious in $G$.

Suppose that $K$ is a group in $\mathbfcal K$ and that $\phi:K\longrightarrow G$ is
an abstract surjective homomorphism whose kernel $N$ is $\mathbfcal K$-analytic.
By Lemma~\ref{LittleLemma}, the set $\phi^{-1}(C)$ is
$\mathbfcal K$-analytic. Hence $G$ is rigid by Theorem~\ref{MainThm}.
\end{proof}

\section{Rigidity of finitely generated profinite groups}

We recall the definition of a verbal subgroup in a group $G$.
\subsection{Verbal subgroups}
Let $G$ be a group and let $F_s$ denote the free group on $s$ generators $x_1,\ldots,x_s$.
By the universal property of $F_s$, every $s$-tuple
$\mathbf g=(g_1,\ldots,g_s)$ of elements of $G$ determines a unique homomorphism
$\phi_{\mathbf g}:F_s\longrightarrow G$, with $\phi_{\mathbf g}(x_i)=g_i$. 
Let $w\in F_s$ be a word, i.e. an element in the free group. 
The \emph{word map} \[w(-):G\times \cdots\times G\longrightarrow G\]
maps $\mathbf g $ to $\phi_{\mathbf g}(w)$. To put it differently, we express $w$ as a word 
$w=w(x_1,\ldots,x_s)$ in
the generators $x_1,\ldots,x_s$
and then we substitute $g_i$ for $x_i$ in $w$,
for $i=1,\ldots,s$, and consider the resulting element $w(g_1,\ldots,g_s)$ in $G$. 
The image of the word map is the \emph{verbal set}
\[
G^w=\{\phi_{\mathbf g}(w)\mid\mathbf g\in G\times\cdots\times G\}
=\{w(g_1,\ldots,g_{s})\mid g_1,\ldots,g_s\in G\}.
\]
The corresponding \emph{verbal subgroup} is 
\[
 w(G)=\langle G^w\rangle.
\]
For example if $s=2$ and $w=[x_1,x_2]$, then $G^w$ is the set of all commutators in $G$ and 
$w(G)$ is the commutator group of $G$.

A topological group $G$ is called \emph{topologically finitely generated} if there exists
a finitely generated dense subgroup in $G$. 
For the case of compact groups, this is also called a compact group of \emph{finite generating rank}
in \cite[Definition 12.15]{HMCompact}.
A profinite group which is topologically finitely generated  is second countable
by \cite[Proposition 12.28]{HMCompact} and in particular metrizable by 
\cite[Corollary~A4.19,~p.~838]{HMCompact}. 

Nikolov--Segal proved
that in a topologically finitely generated profinite group $G$ every abstract subgroup of
finite index is open \cite[Theorem 1.1]{NikolovSegal}. In the course of their proof they show implicitly
that every open subgroup of $G$ contains a verbal open subgroup; this is worked out
in detail in \cite[Lemma~15]{Pejic}. Using this result, it is shown in \cite[Theorem 16]{Pejic}
that every topologically finitely generated profinite group $G$ carries a \emph{unique}
Polish group topology. We generalize this below. First we need an auxiliary result.

\begin{Lem}\label{LittleLemma2}
Let $\mathbfcal K$ be an almost Polish class, let
$G$ be a topological group  in $\mathbfcal K$, with a normal
(but not necessarily closed) subgroup $N\unlhd G$ which is $\mathbfcal{K}$-analytic.
Put $H=G/N$ (as an abstract group) and let $\pi:G\longrightarrow H$ denote 
the quotient map.
Let $w$ be a word in $F_s$.
Then $\pi^{-1}(H^w)$ is $\mathbfcal K$-analytic and $\pi^{-1}(w(H))$ is almost open.
\end{Lem}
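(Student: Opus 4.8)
The plan is to build $\pi^{-1}(H^w)$ as a continuous image of a group in $\mathbfcal K$, exploiting the fact that a word map factors through finitely many multiplications and inversions, together with the description $\pi^{-1}(Y)=s(Y)N$ for a set-theoretic cross section $s:H\longrightarrow G$ of $\pi$, exactly as in the proof of Lemma~\ref{LittleLemma}. First I would fix such a cross section $s$ and record that for every $Y\subseteq H$ we have $\pi^{-1}(Y)=s(Y)N=N s(Y)$ (since $N$ is normal), and that products and inverses of $\mathbfcal K$-analytic subsets of $G$ are again $\mathbfcal K$-analytic — this was already used inside the proof of Lemma~\ref{LittleLemma}, and rests only on the hypothesis that $\mathbfcal K$ is closed under finite products.

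The key computation is to unwind the word map modulo $N$. Writing $w=w(x_1,\dots,x_s)$ as a reduced word, substituting $g_i=s(\pi(g_i))$ and pushing $N$ to the front (using normality at each of the finitely many letters), one gets
\[
\pi^{-1}(H^w)=\{\, n\, w(g_1,\dots,g_s)\mid n\in N,\ g_1,\dots,g_s\in G\,\}
= N\cdot\bigl(G^w\bigr),
\]
because $\pi(w(g_1,\dots,g_s))=w(\pi(g_1),\dots,\pi(g_s))$ ranges over all of $H^w$ as the $g_i$ range over $G$. Now $G^w$ is the image of the continuous word map $G^{\,s}\longrightarrow G$, hence $\mathbfcal K$-analytic (as $G^{\,s}\in\mathbfcal K$ by closure under finite products), and $N$ is $\mathbfcal K$-analytic by hypothesis; so $\pi^{-1}(H^w)=N\cdot G^w$ is $\mathbfcal K$-analytic, being a product of two $\mathbfcal K$-analytic sets.

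For the verbal subgroup, $w(H)=\langle H^w\rangle=\bigcup_{k\geq 1}\bigl(H^w\cup (H^w)^{-1}\bigr)^{\cdot k}$ is a countable increasing union. Applying the first part together with the identity $\pi^{-1}(A_1\cdots A_k)=\pi^{-1}(A_1)\cdots\pi^{-1}(A_k)$ and $\pi^{-1}(A^{-1})=\pi^{-1}(A)^{-1}$, each set $\pi^{-1}\bigl((H^w\cup(H^w)^{-1})^{\cdot k}\bigr)$ is a finite product of $\mathbfcal K$-analytic sets, hence $\mathbfcal K$-analytic, hence almost open by the defining property~(3) of an almost Polish class. Finally $\pi^{-1}(w(H))$ is the countable union of these sets, and since the almost open sets form a $\sigma$-algebra (as recorded after Definition of almost open sets, citing \cite[Theorem 4.3]{Oxtoby}), a countable union of almost open sets is almost open. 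I expect the only mildly delicate point to be the bookkeeping that pushing $N$ past the letters of $w$ really collapses the preimage to $N\cdot G^w$ rather than something larger; this is routine given normality of $N$, but it is the step that must be written carefully.
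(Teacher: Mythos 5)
Your proposal is correct and follows essentially the same route as the paper: identify $\pi^{-1}(H^w)=G^wN$ as $\mathbfcal K$-analytic (continuous word map on $G^{s}\in\mathbfcal K$ plus the fact that products of $\mathbfcal K$-analytic sets are $\mathbfcal K$-analytic), then write $\pi^{-1}(w(H))$ as a countable union of $\mathbfcal K$-analytic, hence almost open, sets and invoke the $\sigma$-algebra property of almost open sets. The only cosmetic difference is that, since $1\in H^w$, the paper takes the union $\bigcup_{n\geq 1}(XX^{-1})^{\cdot n}$ with $X=\pi^{-1}(H^w)$, which avoids having to know that $X\cup X^{-1}$ is itself $\mathbfcal K$-analytic (your factors $H^w\cup(H^w)^{-1}$ pull back to exactly such a union, which is not literally covered by the stated closure properties); your version is repaired at once either by the same trick or by expanding $(H^w\cup(H^w)^{-1})^{\cdot k}$ into the $2^k$ ordered products of $H^w$ and $(H^w)^{-1}$, each of whose preimages is $\mathbfcal K$-analytic and hence almost open, before taking the countable union.
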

\begin{proof}
We proceed similarly as in the proof of Lemma~\ref{LittleLemma}.
The set
\[X=\pi^{-1}(H^w)=\{w(g_1,\ldots,g_s)n\mid g_i\in G,n\in N\}=G^wN\]
is $\mathbfcal K$-analytic by Lemma~\ref{LittleLemma}(1), and so is $X^{-1}$.
Again by Lemma~\ref{LittleLemma}(1), each term in the countable union
\[\pi^{-1}(w(H))=\bigcup_{n\geq 1}(XX^{-1})^{\cdot n}\]
 is $\mathbfcal K$-analytic.
Since $\mathbfcal K$ is almost Polish, the $\mathbfcal K$-analytic sets $(XX^{-1})^{\cdot n}$ are almost open. 
Since the almost open sets form a
$\sigma$-algebra, $\pi^{-1}(w(H))$ is also almost open.
\end{proof}
We have the following generalization of \cite[Theorem 16]{Pejic}, which shows in particular that 
a topologically finitely generated profinite group
is rigid within the classes $\mathbfcal L^\sigma$, $\mathbfcal C$ and $\mathbfcal P$.
This is Theorem~C from the introduction.
\begin{Thm}\label{ProfiniteTheorem}
Let $G$ be a topologically finitely generated profinite group. Let $\mathbfcal K$
be an almost Polish class. If $G$ is contained in $\mathbfcal K$, then $G$ is
rigid within $\mathbfcal K$.
\end{Thm}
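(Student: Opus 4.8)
The plan is to verify the hypotheses of Theorem~\ref{ContinuousOpenTheorem} by producing, for every identity neighborhood $U$ of $G$, a smaller identity neighborhood $V\subseteq U$ whose preimage under an arbitrary abstract surjection $\phi:K\longrightarrow G$ (with $K\in\mathbfcal K$ and $\ker\phi$ $\mathbfcal K$-analytic) is almost open. Since $G$ is topologically finitely generated and profinite, it is second countable and metrizable (as recalled in the text), so it belongs to $\mathbfcal L^\sigma$, $\mathbfcal C$ and $\mathbfcal P$; in particular the hypotheses are not vacuous. First I would set $H=K/\ker\phi$, which is isomorphic as an abstract group to $G$ via the map induced by $\phi$, and let $\pi:K\longrightarrow H$ be the quotient map, so that $\phi=\theta\circ\pi$ for the abstract isomorphism $\theta:H\xrightarrow{\ \sim\ }G$. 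Then $\phi^{-1}(V)=\pi^{-1}(\theta^{-1}(V))$, and it suffices to show that $\pi^{-1}(\theta^{-1}(V))$ is almost open for a neighborhood basis of $V$'s.

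The crucial input is the Nikolov--Segal machinery as packaged in \cite[Lemma~15]{Pejic}: every open subgroup of a topologically finitely generated profinite group $G$ contains an open subgroup of the form $w(G)$ for some word $w$ (a \emph{verbal} open subgroup). So given $U$, choose an open subgroup $O\subseteq U$ (profinite groups have a neighborhood basis of open subgroups), and then a word $w$ with $w(G)\subseteq O\subseteq U$ and $w(G)$ open, hence of finite index in $G$. Transporting along $\theta$, the subgroup $\theta^{-1}(w(G))=w(H)$ (verbal subgroups are preserved by abstract isomorphisms, since word maps are defined purely group-theoretically) is a finite-index subgroup of $H$. Now Lemma~\ref{LittleLemma2} applies directly: $\pi^{-1}(w(H))$ is almost open in $K$. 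Setting $V=w(G)$ (an open, hence neighborhood, subgroup contained in $U$), we get that $\phi^{-1}(V)=\pi^{-1}(w(H))$ is almost open, which is exactly what Theorem~\ref{ContinuousOpenTheorem} requires for continuity. Since $\phi$ is surjective, that theorem also gives openness of $\phi$, so $\phi$ is continuous and open, i.e.\ $G$ is $\mathbfcal K$-rigid.

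The main obstacle is really bookkeeping rather than a deep new idea: one must make sure that ``every open subgroup contains a verbal open subgroup'' is applied to the \emph{topological} group $G$ (where the open subgroups and the Nikolov--Segal theorem live), while the almost-openness conclusion of Lemma~\ref{LittleLemma2} is drawn in the \emph{abstract} quotient $H=K/N$; the bridge between the two worlds is the tautological identity $\theta^{-1}(w(G))=w(H)$, valid because $\theta$ is a group isomorphism and $w(\,\cdot\,)$ is functorial for abstract group isomorphisms. One should also note that $w(G)$ open forces $w(H)$ to have finite (hence countable) index, but in fact Lemma~\ref{LittleLemma2} needs nothing about the index — it yields almost-openness of $\pi^{-1}(w(H))$ unconditionally, using only that $\mathbfcal K$ is almost Polish so that the $\mathbfcal K$-analytic sets $(XX^{-1})^{\cdot n}$ are almost open and the almost open sets form a $\sigma$-algebra. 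Thus the proof is short once the verbal-subgroup lemma of Nikolov--Segal/Peji\'c is invoked.
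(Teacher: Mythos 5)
Your proof is correct and follows essentially the same route as the paper: invoke the Nikolov--Segal result as packaged in \cite[Lemma~15]{Pejic} to find an open verbal subgroup $w(G)$ inside any identity neighborhood, apply Lemma~\ref{LittleLemma2} to see that its preimage is almost open, and conclude with Theorem~\ref{ContinuousOpenTheorem}. The only difference is presentational: you spell out the transport of $w(G)$ along the abstract isomorphism $K/N\cong G$ (via $\theta^{-1}(w(G))=w(H)$), a step the paper leaves implicit when it states that $\phi^{-1}(w(G))$ is almost open.
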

\begin{proof}
Suppose that $1\longrightarrow N\xhookrightarrow{\ \ \ }K\xrightarrow{\ \phi\ }G\longrightarrow 1$
is a short exact sequence of groups, where $\phi$ is an abstract group homomorphism, 
and that $(K,N)$ is in $\mathbfcal K_a$.
By \cite[Lemma~15]{Pejic}, every identity neighborhood $U\subseteq G$
contains an open verbal subgroup $w(G)$, for some word $w$ in some free group $F_s$.
By Lemma~\ref{LittleLemma2}, the preimage $\phi^{-1}(w(G))$ is almost open in $K$.
Hence $\phi$ is continuous and open by Theorem \ref{ContinuousOpenTheorem}.
\end{proof}

\section{Rigidity of compact semisimple groups}
\label{CompactSection}
A compact connected group is called \emph{semisimple} if it coincides with its
commutator group, see \cite[Statements~9.4, 9.5, 9.6]{HMCompact}. 
We recall the structure theorem for compact semisimple groups from \cite[Theorem 9.19]{HMCompact}.
Many more structural results about such groups can be found in Chapter 9 in \emph{loc.cit.}
Let us call a Lie group \emph{almost simple} if it is connected and if its Lie algebra is simple.
We call a group homomorphism \emph{central} if its kernel is contained in the center of its domain.
\begin{Thm}\label{StructureThm}\cite[Theorem 9.19 and Theorem 9.2]{HMCompact}
Let $G$ be a compact connected semisimple group. Then there exists a family of compact
almost simple Lie groups $(S_i)_{i\in I}$ and a central continuous open surjective
homomorphism \[\rho:\prod_{i\in I}S_i\longrightarrow G.\]
Moreover, every element in $G$ is a commutator.
\end{Thm}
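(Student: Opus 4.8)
The plan is to reconstruct this from the Lie theory of compact (pro-Lie) groups together with Got\^o's commutator theorem; we cannot appeal to the cited theorem itself, but the following argument is essentially how it is established. First I would pass to the Lie algebra. Since $G$ is compact and connected, $\operatorname{Lie}(G)$ is a profinite-dimensional topological Lie algebra, and because $G$ is perfect it is semisimple; the structure theory of such pro-Lie algebras yields a decomposition $\operatorname{Lie}(G)=\prod_{i\in I}\mathfrak s_i$ into compact simple ideals. By Weyl's theorem each compact simple real Lie algebra $\mathfrak s_i$ is the Lie algebra of a unique simply connected \emph{compact} Lie group $S_i$, which is then almost simple. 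Set $\widetilde G=\prod_{i\in I}S_i$; this is a compact connected group with $\operatorname{Lie}(\widetilde G)=\prod_i\mathfrak s_i=\operatorname{Lie}(G)$, and, being a product of simply connected groups, $\widetilde G$ is simply connected in the pro-Lie sense.

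Next I would produce the homomorphism. Since $\widetilde G$ is simply connected, the monodromy principle for pro-Lie groups (Lie's Third Theorem) lifts the identity morphism $\operatorname{Lie}(\widetilde G)\to\operatorname{Lie}(G)$ to a continuous homomorphism $\rho:\widetilde G\to G$ with $\operatorname{Lie}(\rho)=\operatorname{id}$. Three short verifications then finish the structural part. \emph{Surjectivity}: $\operatorname{im}\rho$ is a compact, hence closed, subgroup whose Lie algebra is $\operatorname{Lie}(\rho)(\operatorname{Lie}(\widetilde G))=\operatorname{Lie}(G)$, so $\operatorname{im}\rho$ is open in the connected group $G$ and therefore equals $G$. \emph{Openness}: $\rho$ factors as $\widetilde G\to\widetilde G/\ker\rho\to G$, where the first map is an open quotient map of topological groups and the second is a continuous bijection from a compact space onto a Hausdorff space, hence a homeomorphism; thus $\rho$ is open. \emph{Centrality}: $\ker\rho$ is a closed normal subgroup with $\operatorname{Lie}(\ker\rho)=\ker(\operatorname{Lie}(\rho))=0$, hence totally disconnected, so for each $x\in\ker\rho$ the continuous map $g\longmapsto gxg^{-1}$ from the connected group $\widetilde G$ into the totally disconnected set $\ker\rho$ is constant with value $x$, i.e. $\ker\rho$ is central.

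It remains to treat the \emph{moreover} clause. Here I would invoke Got\^o's single-commutator theorem: in a compact connected semisimple Lie group every element is a single commutator, so in particular this holds in each almost simple factor $S_i$. Writing an element of $\widetilde G=\prod_i S_i$ coordinatewise as $([a_i,b_i])_i=[(a_i)_i,(b_i)_i]$ and using $\rho([a,b])=[\rho(a),\rho(b)]$ together with the surjectivity of $\rho$, we conclude that every element of $G$ is a commutator. (Alternatively one proves this clause directly by writing $G$ as a projective limit of its compact semisimple Lie quotients and applying Got\^o to each, but this rests on the same theorem.)

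The bookkeeping above is routine; the genuine obstacles are the two nonelementary inputs. The construction of $\rho$ depends on the full Lie theory of pro-Lie groups --- that a semisimple compact Lie algebra integrates to a compact simply connected Lie group, and that the Lie functor is full on morphisms out of a simply connected pro-Lie group --- while the commutator assertion is exactly Got\^o's theorem. One mildly delicate point, if one instead builds $\widetilde G$ as the inverse limit of the simply connected covers of the Lie quotients $G/N$, is tracking the simple factors and their multiplicities through the transition maps; this is avoided by working directly at the level of the pro-Lie algebra as above.
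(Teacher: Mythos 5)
The paper offers no proof of this statement at all: it is quoted verbatim from Hofmann--Morris, \cite[Theorems 9.19 and 9.2]{HMCompact}, so there is nothing internal to compare against. Your reconstruction is essentially correct and is a legitimate alternative to the proof in \emph{loc.cit.}: you integrate the pro-Lie algebra decomposition $\operatorname{Lie}(G)=\prod_i\mathfrak s_i$ by Weyl's theorem and the monodromy/Lie's Third Theorem machinery for pro-Lie groups, whereas Hofmann--Morris construct $\rho$ more concretely from the minimal closed connected normal (almost simple) subgroups of $G$ itself, with the analytic work going into the convergence of infinite products of commuting factors rather than into pro-Lie monodromy; your route buys a cleaner functorial argument at the price of importing the full Lie theory of pro-Lie groups, while theirs stays inside the compact-groups book. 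The ``moreover'' clause is in both cases exactly Got\^o's theorem (\cite[Theorem 9.2]{HMCompact}) applied to the Lie factors and pushed through the surjection, so no discrepancy there. One step you should tighten: in the surjectivity argument, ``$\operatorname{im}\rho$ is a closed subgroup with full Lie algebra, hence open'' is a Lie-group fact and is not the right formulation when $G$ is not a Lie group; the correct deduction is that for every compact normal $N\unlhd G$ with $G/N$ a Lie group the image of $\operatorname{im}\rho$ in the connected Lie group $G/N$ is a closed subgroup with full Lie algebra, hence equals $G/N$, so $(\operatorname{im}\rho)N=G$ for all such $N$; since these $N$ form a filter basis converging to $1$, $\operatorname{im}\rho$ is dense, and being compact it is closed, hence equals $G$ (after which openness of $\rho$ follows as you say from compactness). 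With that repair the argument is sound.
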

It turns out that for proving automatic continuity of abstract homomorphisms onto 
compact connected semisimple groups, we need only the abstract group-theo\-re\-tic properties
stated in Theorem~\ref{StructureThm}. This abstraction allows us to prove a
continuity result which applies also to a wide class of profinite groups.
We set the stage as follows.
\subsection{Quasisimple and quasi-semisimple groups}\label{Quasi-Def}
Extending widespread terminology from finite group theory, we call a nontrivial compact group
$S$ \emph{quasisimple} if its abstract commutator group is dense and if $S/\operatorname{Cen}(S)$ 
is topologically simple (meaning that $S/\operatorname{Cen}(S)$ is nontrivial and has no nontrivial closed proper normal subgroups).

We call a compact group $G$ \emph{quasi-semisimple}
if there exists a family of compact quasisimple
groups $(S_i)_{i\in I}$ and a continuous surjective central homomorphism
\[
\prod_{i\in I}S_i\xrightarrow{\ \rho\ } G. \leqno{(*{*}*)}
\]
Hence every compact connected semisimple group is quasi-semisimple.
We note that $\rho$ is automatically open, either by the Open Mapping Theorem 
\cite[II.5.29]{HewittRoss},  \cite{HoMoOpenMapping},  \cite[6.19]{Stroppel},
or alternatively by Theorem~\ref{ContinuousOpenTheorem}, applied 
to the almost Polish class 
$\mathbfcal C$ of compact spaces.
Thus every compact connected semisimple group is quasi-semisimple. But also profinite groups
like $\prod_{p\in\mathbb P}\operatorname{SL}_2(\mathbb F_p)$ are quasi-semisimple, 
where $\mathbb P$ denotes the set of all primes and $\mathbb F_p$ the field of $p$ elements.

Let $\rho$ be a homomorphism as in $(*{*}*)$ above.
For a subset $J\subseteq I$ we put $S_J=\prod_{j\in J}S_j$,
with the convention that $S_\emptyset=\{1\}$. 
We view the group $S_J$ as a compact 
subgroup of $S_I$, and we put $G_J=\rho(S_J)$. Thus $G_J$ is a compact
quasi-semisimple
subgroup of $G=G_I$. If $I=J\sqcup K$ is a partition of $I$, then $G_J$ and $G_K$ commute, whence
$G=G_JG_K$ and therefore $G_J\cap G_K\subseteq\operatorname{Cen}(G)$.

The next proposition clarifies the structure of compact quasisimple groups. It depends
heavily on several deep results.
\begin{Prop}\label{quasi-simpleProp}
Suppose that $S$ is a compact quasisimple group. Then either $S$ is a finite quasisimple
group, or $S$ is a compact almost simple Lie group. Every element in $S$ is a product of 
(at most) $2$ commutators.
\end{Prop}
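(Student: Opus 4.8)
The plan is to split along the dichotomy that is forced by the classification of topologically simple compact groups. Let $S$ be compact quasisimple, so by definition the abstract commutator subgroup $[S,S]$ is dense in $S$ and $\bar S:=S/\operatorname{Cen}(S)$ is topologically simple. The first step is to invoke the structure theory of compact topologically simple groups: such a group is either finite simple or a compact simple (equivalently, centerless almost simple) Lie group. This is a deep input — it ultimately rests on the classification of finite simple groups together with the solution of Hilbert's fifth problem and the classification of simple compact Lie groups — but it is exactly the sort of result we are entitled to cite. So $\bar S$ is either finite or a compact centerless almost simple Lie group.

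In the first case, $\bar S$ finite, the quotient map $S\to\bar S$ has compact (hence closed) kernel $\operatorname{Cen}(S)$, so $\operatorname{Cen}(S)$ is open in $S$; being also closed and of finite index, $S$ itself is then a compact group with an open central subgroup of finite index, and since $[S,S]$ is dense while $S/\operatorname{Cen}(S)$ is finite, one argues that $\operatorname{Cen}(S)$ is in fact finite: the commutator map lands in $[S,S]$, whose closure is $S$, and a standard compactness/finiteness argument (the commutator subgroup of a finite-by-abelian group is finite, applied after noting $\operatorname{Cen}(S)$ is finitely generated as it is a closed subgroup of a quotient-by-finite of a group with dense f.g.\ commutator) forces $|\operatorname{Cen}(S)|<\infty$. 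Hence $S$ is a finite group; density of $[S,S]$ then says $[S,S]=S$, and $S/\operatorname{Cen}(S)$ simple makes $S$ a finite quasisimple group in the usual sense.

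In the second case, $\bar S$ is a compact centerless simple Lie group $\mathfrak g$-group with $\mathfrak g=\operatorname{Lie}(\bar S)$. The central extension $1\to\operatorname{Cen}(S)\to S\to\bar S\to 1$ has closed kernel, so $S$ is a compact group which is an extension of a Lie group by a central compact group. One shows $S$ is itself a Lie group: a compact group is a Lie group iff it has no small subgroups, and a central extension of a Lie group by a compact group with this property is again such — here one uses that $\operatorname{Cen}(S)$, being abelian and the center of a group whose abstract commutator is dense, is actually finite (the universal central extension of a compact simple Lie group has finite center, e.g.\ $\pi_1$ is finite for compact semisimple, and any quotient of it is covered), so $S\to\bar S$ is a finite covering and $S$ is a compact connected Lie group with simple Lie algebra, i.e.\ compact almost simple.

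**Final clause.** For the commutator statement: if $S$ is a compact almost simple Lie group, then by Theorem~\ref{StructureThm} applied to $S$ (which is trivially quasi-semisimple, being a single quasisimple factor) every element of $S$ is a single commutator, so certainly a product of two. If $S$ is finite quasisimple, then by the classification-of-finite-simple-groups input and the resolution of the Ore conjecture by Liebeck--O'Brien--Shalev--Tiep, every element of the simple group $\bar S$ is a commutator; lifting, every element of $S=[S,S]$ is a product of at most two commutators (write $g\operatorname{Cen}(S)=[\bar a,\bar b]$, lift to $[a,b]\in g\operatorname{Cen}(S)$, so $g=[a,b]z$ with $z\in\operatorname{Cen}(S)=[S,S]$ a product of commutators; a small extra argument, or the stronger statement that finite quasisimple groups have commutator width $\leq 2$, finishes it). The main obstacle is purely expository: deciding how much of the classification of compact topologically simple groups and of the Ore-type width bounds to cite versus reprove — mathematically everything here is an assembly of known deep theorems, and the only genuine work is the bookkeeping that forces $\operatorname{Cen}(S)$ to be finite in both branches.
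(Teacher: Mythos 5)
Your overall strategy is the same as the paper's (split along the structure of $S/\operatorname{Cen}(S)$, use a Schur-type theorem in the totally disconnected case, Got\^o respectively Liebeck--O'Brien--Shalev--Tiep for the commutator width), but in the Lie case there is a genuine gap. You assume without proof that $S$ is connected and that $\operatorname{Cen}(S)$ is finite. A compact central extension $1\to Z\to S\to \bar S\to 1$ of a connected group can perfectly well be disconnected, and a priori $Z=\operatorname{Cen}(S)$ is just some compact abelian group, possibly infinite profinite; your appeal to ``any quotient of the universal central extension is covered'' is circular, because parametrizing the compact central extensions of $\bar S$ by quotients of $\pi_1(\bar S)$ already presupposes that $S$ is a connected covering group of $\bar S$ --- which is essentially what has to be proved. (Your no-small-subgroups remark is also false as stated: $\bar S\times\mathbb{Z}_p$ is a central extension of a Lie group by a compact group and has small subgroups.) What rules these scenarios out is precisely the density of the abstract commutator subgroup, and this needs an argument, as in the paper: the image of $S^\circ$ in $H=S/\operatorname{Cen}(S)$ is a closed normal subgroup, hence trivial or all of $H$; it cannot be trivial, since then the totally disconnected group $S/S^\circ$ would map onto the connected group $H$ by an open homomorphism; hence $S=S^\circ\operatorname{Cen}(S)$, so every commutator lies in $S^\circ$, and density gives $S=S^\circ$. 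Only at that point is $S$ a compact connected semisimple group, and Theorem~\ref{StructureThm} together with the simplicity of $S/\operatorname{Cen}(S)$ forces a single almost simple Lie factor; the finiteness of the center comes out as a conclusion, not an input.

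In the finite case your bookkeeping for $\operatorname{Cen}(S)$ is also off: nothing makes $\operatorname{Cen}(S)$ finitely generated, and the parenthetical justification does not parse. The clean route (and the paper's) is to apply Schur's theorem directly: $S/\operatorname{Cen}(S)$ finite implies the abstract commutator subgroup $[S,S]$ is finite, hence closed, hence equal to $S$ by density, so $S$ itself is finite and perfect, and only then is $\operatorname{Cen}(S)$ finite. For the width-two claim in this case you really do need the quasisimple result of Liebeck et al.\ that you mention as an aside: Ore for the simple quotient plus lifting does not bound the number of commutators, and there exist finite quasisimple groups containing non-commutators, so the ``small extra argument'' is not available. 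The connected case via Got\^o's theorem is fine and matches the paper.
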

\begin{proof}
Put $H=S/\operatorname{Cen}(S)$.
 By \cite[9.90]{HMCompact}, the group $H$ is simple as an abstract group,
and $H$ is a compact Lie group (first paragraph of the proof in \emph{loc.cit.}).
The identity component $H^\circ$ is thus either trivial, or $H^\circ=H$.
Let $S'\subseteq S$ denote the abstract commutator group of $S$.
Note that $S'$ is not contained in the center of $S$, since otherwise $S=\overline{S'}$ would be
abelian, and then $S'$ and hence $S$ would be trivial.
Therefore $S'$ surjects onto the simple group $H$.
In particular, $H$ is perfect and thus nonabelian.

If $H^\circ=\{1\}$, then $H$, being a compact Lie group, is finite and Schur's 
Theorem \cite[10.1.4]{Robinson} implies that
$S$ is finite and perfect. By the deep result \cite{Liebeck}, every element in $S$ is the product of
at most $2$ commutators.

If $H^\circ=H$, then $H$ is in particular a compact almost simple Lie group. 
We put $Z=\operatorname{Cen}(S)$ and we claim that $S$ is connected.
The image of $S^\circ$ in $H$ is a closed normal subgroup and hence either trivial,
or it coincides with $H$. In the first case, $S^\circ\subseteq Z$. But then 
the compact totally disconnected group $S/S^\circ$ maps onto $H$ through an open
homomorphism, contradicting  the fact that $H$ is connected.
Hence $S^\circ$ maps onto $H$, that is, $S^\circ Z=S$.
But then every commutator of elements in $S$ is contained in $S^\circ$, and therefore $S=S^\circ$
is a compact connected semisimple group \cite[9.3 and 9.5]{HMCompact}.
From the Structure Theorem \ref{StructureThm} and the fact that $S/Z$ is almost simple,
we conclude that $S$ is an almost simple Lie group.
By Got\^o's Theorem \cite[9.2]{HMCompact}, every element in $S$ is a commutator.
\end{proof}
\begin{Cor}
In a quasi-semisimple group, every element is a product of $2$ commutators.
\end{Cor}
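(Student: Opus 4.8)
The plan is to reduce the statement to Proposition~\ref{quasi-simpleProp} by lifting along the defining homomorphism. Let $G$ be quasi-semisimple, so that we are given compact quasisimple groups $(S_i)_{i\in I}$ and a continuous surjective central homomorphism $\rho:\prod_{i\in I}S_i\longrightarrow G$ as in $(*{*}*)$. Fix an arbitrary element $g\in G$. First I would use surjectivity of $\rho$ to pick a preimage $s=(s_i)_{i\in I}\in\prod_{i\in I}S_i$ with $\rho(s)=g$.

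Next I would apply Proposition~\ref{quasi-simpleProp} coordinatewise: for every $i\in I$ the element $s_i\in S_i$ is a product of at most two commutators. Padding a single commutator or the identity with the trivial commutator $[1,1]$, I may write $s_i=[a_i,b_i][c_i,d_i]$ for suitable $a_i,b_i,c_i,d_i\in S_i$, uniformly over all $i$. Setting $a=(a_i)_{i\in I}$, and likewise $b,c,d$, and using that multiplication, inversion, and hence commutators in a direct product are computed componentwise, we get $s=[a,b][c,d]$ inside $\prod_{i\in I}S_i$.

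Finally, since $\rho$ is a group homomorphism it preserves products and commutators, so
\[
 g=\rho(s)=\rho([a,b])\,\rho([c,d])=[\rho(a),\rho(b)]\,[\rho(c),\rho(d)],
\]
exhibiting $g$ as a product of two commutators in $G$. Note that no topological input is needed here beyond surjectivity of $\rho$; the argument is purely algebraic. The only point requiring a moment's care—hardly an obstacle—is the normalization turning ``at most two commutators'' in each $S_i$ into ``exactly two'', which is what lets the coordinatewise assembly produce a genuine length-two commutator word in the product group. \qed
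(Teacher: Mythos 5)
Your proof is correct and is exactly the argument the paper intends: the corollary is stated as an immediate consequence of Proposition~\ref{quasi-simpleProp}, obtained by lifting along the surjection $\rho$, writing each coordinate as a product of (exactly) two commutators, assembling componentwise, and pushing forward, just as you do.
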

For the next two lemmas we assume that $G$ is quasi-semisimple as in
Definition \ref{Quasi-Def}, that $(S_i)_{i\in I}$ is a family of compact
quasisimple groups, and that $\rho:\prod_{i\in I}S_i\longrightarrow G$ is
a central surjective continuous homomorphism. Note that $\rho$ is open
by the remark above.
\begin{Lem}\label{Lem-1}
If $I=J\sqcup K$ is a partition of $I$, then
\[\operatorname{Cen}_G(G_K)=G_J\operatorname{Cen}(G_K)\text{ and }
 G_J=\{[g_1,g_2][h_1,h_2]\mid g_1,g_2,h_1,h_2\in \operatorname{Cen}_G(G_K)\}.
\]
\end{Lem}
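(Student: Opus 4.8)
The statement has two parts: first, the identification of the $G$-centralizer of $G_K$, and second, the description of $G_J$ as the set of products of two commutators of elements from that centralizer. I would prove the first part by a double inclusion. The inclusion $G_J\operatorname{Cen}(G_K)\subseteq\operatorname{Cen}_G(G_K)$ is immediate: $G_J$ and $G_K$ commute because $I=J\sqcup K$ is a partition (so $S_J$ and $S_K$ commute in $S_I$, and $\rho$ is a homomorphism), and $\operatorname{Cen}(G_K)$ centralizes $G_K$ by definition. For the reverse inclusion, suppose $g\in G$ centralizes $G_K$. Write $g=\rho(s)$ with $s=(s_J,s_K)\in S_J\times S_K$, so that $g=\rho(s_J)\rho(s_K)$ with $\rho(s_J)\in G_J$ and $\rho(s_K)\in G_K$. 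Since $\rho(s_J)\in G_J$ already centralizes $G_K$, the element $\rho(s_K)\in G_K$ must also centralize $G_K$, i.e. $\rho(s_K)\in\operatorname{Cen}(G_K)$. Hence $g=\rho(s_J)\rho(s_K)\in G_J\operatorname{Cen}(G_K)$, as desired.

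**The second part.** For the description of $G_J$, I would again argue by two inclusions, this time using the Corollary that in a quasi-semisimple group every element is a product of two commutators. Apply this Corollary inside the quasi-semisimple group $G_J$: every element of $G_J$ has the form $[g_1,g_2][h_1,h_2]$ with $g_1,g_2,h_1,h_2\in G_J\subseteq\operatorname{Cen}_G(G_K)$ (the containment $G_J\subseteq\operatorname{Cen}_G(G_K)$ is part of what we just proved). This gives the inclusion ``$\subseteq$''. For the reverse inclusion ``$\supseteq$'', take any $g_1,g_2,h_1,h_2\in\operatorname{Cen}_G(G_K)=G_J\operatorname{Cen}(G_K)$ and write each $g_i=a_ic_i$, $h_i=b_id_i$ with $a_i,b_i\in G_J$ and $c_i,d_i\in\operatorname{Cen}(G_K)\subseteq\operatorname{Cen}(G)$ (here I use that $\operatorname{Cen}(G_K)\subseteq\operatorname{Cen}(G)$, which follows because $G=G_JG_K$ and an element of $\operatorname{Cen}(G_K)$ lying in $G_K$ automatically commutes with $G_J$ as well, hence with all of $G$). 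Central factors drop out of commutators, so $[g_1,g_2]=[a_1,a_2]\in G_J$ and $[h_1,h_2]=[b_1,b_2]\in G_J$, whence $[g_1,g_2][h_1,h_2]\in G_J$. Combining the two inclusions gives the claimed equality.

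**The main obstacle.** The bookkeeping is routine once the following point is pinned down carefully: that $\operatorname{Cen}(G_K)\subseteq\operatorname{Cen}(G)$, so that the central factors in the decomposition $\operatorname{Cen}_G(G_K)=G_J\operatorname{Cen}(G_K)$ really are central in $G$ and can be discarded from commutators. This is exactly the remark recorded just before Lemma~\ref{Lem-1} (that $G_J\cap G_K\subseteq\operatorname{Cen}(G)$ and $G=G_JG_K$): an element of $\operatorname{Cen}(G_K)$ is in $G_K$, commutes with $G_K$ by definition, and commutes with $G_J$ because $G_J$ and $G_K$ commute; since $G=G_JG_K$ it is central in $G$. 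With that observation in hand, both inclusions in both parts are short, and the only external input needed is the Corollary on products of two commutators in quasi-semisimple groups.
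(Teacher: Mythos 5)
Your proof is correct and follows essentially the same route as the paper: both arguments decompose an element of $\operatorname{Cen}_G(G_K)$ as a product of a $G_J$-factor and a factor in $\operatorname{Cen}(G_K)$ using $G=G_JG_K$, observe that the $G_K$-factors drop out of commutators, and invoke the corollary that every element of the quasi-semisimple group $G_J$ is a product of two commutators. Your extra observation that $\operatorname{Cen}(G_K)\subseteq\operatorname{Cen}(G)$ is valid but slightly more than needed; the paper only uses that the two factors commute with each other.
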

\begin{proof}
Every element $g\in G$ can be written as $g=ab$, with $a\in G_J$ and $b\in G_K$.
Let $h\in G_K$. Then $abh=hab$ holds if and only if $bh=hb$. Hence the centralizer
of $G_K$ consists of all elements of the form $ab$, with $a\in G_J$ and $b\in\operatorname{Cen}(G_K)$.
The commutator of two such elements is $[a_1b_1,a_2b_2]=[a_1,a_2][b_1,b_2]=[a_1,a_2]\in G_J$.
Since $G_J$ is quasi-semisimple, every element in $G_J$ is a product of two commutators of elements of $G_J$.
\end{proof}
\begin{Lem}\label{Lem-2}
Let $U\subseteq G$ be an identity neighborhood. Then there exists a finite subset
$J\subseteq I$, and an identity neighborhood $W$ in $G_J$ such that 
$WG_{I\setminus J}\subseteq U$ is an identity neighborhood in $G$.
\end{Lem}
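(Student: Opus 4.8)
The plan is to pull the identity neighbourhood $U$ back along $\rho$ and to exploit that a basic identity neighbourhood in a product group constrains only finitely many coordinates. Write $S_I=\prod_{i\in I}S_i$. Then $\rho^{-1}(U)$ is an open neighbourhood of the identity in $S_I$, so by the definition of the product topology there is a finite set $J\subseteq I$ and, for each $j\in J$, an open identity neighbourhood $V_j\subseteq S_j$ with
\[
 N:=\Big(\prod_{j\in J}V_j\Big)\times\Big(\prod_{i\in I\setminus J}S_i\Big)\subseteq\rho^{-1}(U).
\]
Set $V:=\big(\prod_{j\in J}V_j\big)\times\prod_{i\in I\setminus J}\{1\}\subseteq S_J$. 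Since multiplication in $S_I$ is componentwise, one checks directly that $N=V\,S_{I\setminus J}$, and hence $\rho(N)=\rho(V)\,\rho(S_{I\setminus J})=\rho(V)\,G_{I\setminus J}$.

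Next I would use that $\rho$ is open, as recorded in \ref{Quasi-Def}. Since $N$ is open in $S_I$ and contains the identity, $\rho(N)$ is an open identity neighbourhood in $G$ with $\rho(N)\subseteq U$. To land inside $G_J$, put
\[
 W:=\rho(N)\cap G_J,
\]
which is open in $G_J$ and contains the identity, hence is an identity neighbourhood in $G_J$. We have $\rho(V)\subseteq W\subseteq\rho(N)$, because $\rho(V)\subseteq\rho(N)$ and $\rho(V)\subseteq\rho(S_J)=G_J$; moreover $\rho(N)\,G_{I\setminus J}=\rho(V)\,G_{I\setminus J}\,G_{I\setminus J}=\rho(V)\,G_{I\setminus J}=\rho(N)$ since $G_{I\setminus J}$ is a subgroup. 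Multiplying the chain $\rho(V)\subseteq W\subseteq\rho(N)$ on the right by $G_{I\setminus J}$ therefore yields $\rho(N)\subseteq W\,G_{I\setminus J}\subseteq\rho(N)$, so $W\,G_{I\setminus J}=\rho(N)$ is an identity neighbourhood in $G$ contained in $U$, as required.

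There is no real obstacle in this argument; the one point requiring a little care is the distinction between an identity neighbourhood in $G_J$ and one in $G$. This is why $W$ is taken to be $\rho(N)\cap G_J$ rather than $\rho(V)$ itself: since $G_J$ and $G_{I\setminus J}$ may overlap in $\operatorname{Cen}(G)$, it is not a priori clear that $\rho(V)$ is open in $G_J$. Alternatively one could invoke the Open Mapping Theorem for the continuous surjective homomorphism $\rho|_{S_J}\colon S_J\longrightarrow G_J$ of compact groups, which would make $\rho(V)$ itself open in $G_J$ and directly usable as $W$; but the version above relies only on the openness of $\rho$, which is already available.
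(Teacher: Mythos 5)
Your proof is correct and follows essentially the same route as the paper: pull $U$ back along the continuous $\rho$, take a basic product-topology neighborhood constraining only finitely many coordinates, and push it forward using the openness of $\rho$; your only variation is defining $W=\rho(N)\cap G_J$ with a short sandwich argument instead of invoking openness of the restricted map $\rho|_{S_J}\colon S_J\to G_J$ to take $W=\rho(W')$ directly, which is a harmless (and slightly more economical) alternative. The only nitpick is that $\rho^{-1}(U)$ need not be open when $U$ is merely a neighborhood, but replacing $U$ by its interior fixes this trivially.
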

\begin{proof}
By the definition of the product topology and from the continuity of $\rho$, we find
a finite index set $J\subseteq I$ and an identity neighborhood $W'\subseteq S_J$
such that $W'S_{I\setminus J}\subseteq S_I$ is an identity neighborhood, with
$\rho(W'S_{I\setminus J})\subseteq U$. Since $\rho:S_J\longrightarrow G_J$ is open, the set
$W=\rho(W')$ is an identity neighborhood in $G_J$, and 
similarly $\rho(W'S_{I\setminus J})=WG_{I\setminus J}$ 
is an identity neighborhood in $G$.
\end{proof}
The next result implies Theorem~B in the introduction. Note, however, that 
Theorem~\ref{CompactSemisimpleThm} applies also to profinite quasi-semisimple groups,
which need not be topologically finitely generated, separable, or metrizable.
\begin{Thm}\label{CompactSemisimpleThm}
Let $G$ be a compact quasi-semisimple group, and let 
$\mathbfcal K$ be an almost Polish class. If $G$ belongs to $\mathbfcal K$, then
$G$ is rigid within $\mathbfcal K$.
\end{Thm}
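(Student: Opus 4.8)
The plan is to verify the hypothesis of Theorem~\ref{ContinuousOpenTheorem}. So let
\[
 1\longrightarrow N \xhookrightarrow{\ \ \ } K\xrightarrow{\ \phi\ }G\longrightarrow 1
\]
be a short exact sequence with $G,K\in\mathbfcal K$ and $(K,N)\in\mathbfcal K_a$; it then suffices to produce, for every identity neighborhood $U\subseteq G$, an identity neighborhood $V\subseteq U$ with $\phi^{-1}(V)$ $\mathbfcal K$-analytic. Fix a central continuous surjective (hence open) homomorphism $\rho\colon S_I=\prod_{i\in I}S_i\longrightarrow G$ with each $S_i$ compact quasisimple, and keep the notation $G_J=\rho(S_J)$ of Section~\ref{Quasi-Def}. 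By Lemma~\ref{Lem-2} there is a \emph{finite} subset $J\subseteq I$ and an identity neighborhood $W_0\subseteq G_J$ such that $W_0G_{I\setminus J}$ is an identity neighborhood of $G$ contained in $U$.

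Because $J$ is finite, $S_J$ is a finite product of finite groups and compact almost simple Lie groups (Proposition~\ref{quasi-simpleProp}), hence a compact Lie group, and $\ker(\rho|_{S_J})$ is finite, being contained in $\operatorname{Cen}(S_J)=\prod_{j\in J}\operatorname{Cen}(S_j)$ with each $\operatorname{Cen}(S_j)$ finite. Therefore $G_J$ is a compact Lie group and $\operatorname{Lie}(G_J)\cong\operatorname{Lie}(S_J)$ is semisimple (a direct sum of simple ideals and zero summands), in particular perfect. Applying Theorem~\ref{vdW2} to the Lie group $G_J$ with the compact identity neighborhood $G_J$ itself, we obtain $1$-parameter groups $c_1,\dots,c_n$ in $G_J$ and numbers $0<t_i\le 1$ such that $M:=[c_1(t_1),G_J]\cdots[c_n(t_n),G_J]$ is an identity neighborhood of $G_J$ with $M\subseteq W_0$ (read $M=\{1\}$ if $\operatorname{Lie}(G_J)=0$). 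Since $G_J$ and $G_{I\setminus J}$ commute and generate $G$, the multiplication map $G_J\times G_{I\setminus J}\longrightarrow G$ is a continuous open surjective homomorphism, so $V:=MG_{I\setminus J}$ is an identity neighborhood of $G$, and $V\subseteq W_0G_{I\setminus J}\subseteq U$.

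It remains to check that $\phi^{-1}(V)$ is $\mathbfcal K$-analytic. As $\phi$ is surjective we have $\phi^{-1}(V)=\phi^{-1}(M)\,\phi^{-1}(G_{I\setminus J})$ (cf. Lemma~\ref{LittleLemma}(1)), so it suffices to treat each factor. For $\phi^{-1}(G_{I\setminus J})$: the compact Lie group $G_J$ has finitely many components, so by Lemma~\ref{tfg} it has a finitely generated dense subgroup $\langle X\rangle$, whence $\operatorname{Cen}_G(G_J)=\operatorname{Cen}_G(X)$ with $X$ finite (centralizing is a closed condition, so the centralizer of a dense subgroup equals the centralizer of the whole group); by Lemma~\ref{Lem-1}, applied to the partition $I=(I\setminus J)\sqcup J$, one has $G_{I\setminus J}=\{[g_1,g_2][h_1,h_2]\mid g_i,h_i\in\operatorname{Cen}_G(X)\}$, and its preimage is $\mathbfcal K$-analytic by Lemma~\ref{LittleLemma}(4), (2) and (1). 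For $\phi^{-1}(M)$: each $c_i(t_i)$ lies in $G_J$ and hence commutes with $G_{I\setminus J}$, so for $g=ab$ with $a\in G_J$ and $b\in G_{I\setminus J}$ one computes $[c_i(t_i),g]=[c_i(t_i),a]$; consequently $[c_i(t_i),G_J]=\{[c_i(t_i),g]\mid g\in G\}$, and Lemma~\ref{LittleLemma}(2) (applied with the singleton $\{c_i(t_i)\}$, whose preimage is a coset of $N$, in one argument, and all of $G$, whose preimage is $K$, in the other) shows $\phi^{-1}([c_i(t_i),G_J])$ is $\mathbfcal K$-analytic; by Lemma~\ref{LittleLemma}(1) so is the product $\phi^{-1}(M)$. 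Thus $\phi^{-1}(V)$ is a product of two $\mathbfcal K$-analytic subsets of $K$, hence $\mathbfcal K$-analytic, and Theorem~\ref{ContinuousOpenTheorem} yields that $\phi$ is continuous and open.

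The step I expect to be the real obstacle is making the preimage of a small neighborhood inside $G_J$ turn out $\mathbfcal K$-analytic. The obvious idea, writing $G_J$ itself as $\operatorname{Cen}_G(Y)$ for a finite $Y$ and invoking Lemma~\ref{LittleLemma}(4), fails as soon as $I$ is infinite, for it would force the complementary factor $G_{I\setminus J}$ to be topologically finitely generated. What rescues the argument is the identity $[c,G_J]=[c,G]$ for $c\in G_J$ noted above: it lets one feed the whole group $G$, with its automatically $\mathbfcal K$-analytic preimage $K$, into Lemma~\ref{LittleLemma}(2) in place of $G_J$. Taking care of the finite quasisimple factors of $G$ (which may disconnect $G_J$ and contribute nothing to $\operatorname{Lie}(G_J)$) is a routine matter, handled by working with the compact Lie group $G_J$ and its dense finitely generated subgroup via Lemma~\ref{tfg}.
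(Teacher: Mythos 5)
Your proof is correct and follows essentially the same route as the paper: reduce to a finite $J$ via Lemma~\ref{Lem-2}, express $G_{I\setminus J}$ through centralizers of a finite dense subset of $G_J$ (Lemmas~\ref{tfg}, \ref{Lem-1}, \ref{LittleLemma}), and use Theorem~\ref{vdW2} in $G_J$ together with the identity $[a,G_J]=[a,G]$ for $a\in G_J$ to make the preimages $\mathbfcal K$-analytic, concluding with Theorem~\ref{ContinuousOpenTheorem}. The extra details you supply (why $G_J$ is a compact Lie group with perfect Lie algebra, the degenerate case $\operatorname{Lie}(G_J)=0$) are only elaborations of steps the paper takes for granted.
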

\begin{proof}
Suppose that \[1\longrightarrow N\xhookrightarrow{\ \ \ }K\xrightarrow{\ \phi\ }G\longrightarrow 1\]
is a short exact sequence of groups, where $\phi$ is an abstract group homomorphism, 
and that $(K,N)\in\mathbfcal K_a$.
We use the notation that we have set up above. Let $U\subseteq G$ be an identity neighborhood.
We choose $J\subseteq I$ finite and an identity neighborhood
$W\subseteq G_J$ as in Lemma \ref{Lem-2} above,
such that $WG_{I\setminus J}\subseteq U$.
Since $G_J$ is a compact Lie group, there exists a finite subset $X\subseteq G_J$ which
generates a dense subgroup in $G_J$, see Lemma \ref{tfg}.
By Lemma \ref{Lem-1}, \[G_{I\setminus J}=\{[g_1,g_2][h_1,h_2]\mid g_1,g_2,h_1,h_2\in \operatorname{Cen}_G(X)\}.\]
Then
$\phi^{-1}(G_{I\setminus J})$ 
is 
$\mathbfcal K$-analytic by Lemma \ref{LittleLemma}.
The Lie algebra of $G_J$ is semisimple.
Hence  we may choose elements $a_1,\ldots, a_r$ in the compact Lie group $G_J$
as in Theorem~\ref{vdW2}, such that
\[V=[a_1,G_J]\cdots[a_r,G_J]=[a_1,G]\cdots[a_r,G]\subseteq W\] is a compact identity
neighborhood in $G_J$. Then 
$VG_{I\setminus J}\subseteq WG_{I\setminus J}\subseteq U$ is a compact identity neighborhood, and 
$\phi^{-1}(VG_{I\setminus J})$ is $\mathbfcal K$-analytic by Lemma \ref{LittleLemma}.
The claim follows now from Theorem~\ref{ContinuousOpenTheorem}.
\end{proof}

\section{The proof of Theorem D}

In this last section we consider abstract homomorphisms $\psi:G\longrightarrow H$,
where $H$ is a topological group and $G$ is a Lie group whose Lie algebra is
perfect. 
The following result is Theorem~D from the introduction.
It generalizes some of the the main results in \cite{Shtern2006}.
\begin{Thm}\label{OtherDircetion}
Let $G$ be a Lie group whose Lie algebra is perfect. Let $H$ be a topological group, and let
$\psi:G\longrightarrow H$ be an abstract homomorphism. If there exists a
compact spacious set $C\subseteq G$ whose image $\psi(C)$ has compact closure, then 
$\psi$ is continuous.
\end{Thm}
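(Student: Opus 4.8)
The strategy is to reduce the continuity of $\psi:G\longrightarrow H$ to
Theorem~\ref{ContinuousOpenTheorem} and the van~der~Waerden shrinking
machinery of Theorem~\ref{vdW2}, but applied on the \emph{domain} side.
Set $\overline{\psi(C)}=Q$, a compact subgroup-free subset of $H$ with compact
closure. First I would pass to the topological group $\widehat H$ generated
by $Q$ inside $H$; more usefully, I would not change $H$ at all but work
directly with the family of sets $\psi^{-1}(V)$ for $V$ ranging over identity
neighbourhoods of $H$, and show these form a neighbourhood basis of the
identity in $G$. Concretely, since $C$ is compact and spacious in $G$, pick
$g_1,\ldots,g_r\in G$ with $D=g_1CC^{-1}g_2CC^{-1}\cdots g_rCC^{-1}$ a compact
identity neighbourhood of $G$. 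The key point is that $\psi(D)$ has compact
closure in $H$: indeed $\psi(D)=\psi(g_1)\psi(CC^{-1})\psi(g_2)\cdots$, each
$\psi(CC^{-1})\subseteq QQ^{-1}$ has compact closure, and a finite product of
sets with compact closure in a topological group again has compact closure.
So $\overline{\psi(D)}=:D'$ is a compact subset of $H$.

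Now apply Theorem~\ref{vdW2} to the $n$-dimensional Lie group $G$ with perfect
Lie algebra: there are $1$-parameter groups $c_1,\ldots,c_n$ in $G$ so that for
$0<t_i\le 1$ the sets $M_{t_1,\ldots,t_n}=[c_1(t_1),D]\cdots[c_n(t_n),D]$ are
identity neighbourhoods of $G$, and for any identity neighbourhood $W\subseteq G$
the $t_i>0$ can be chosen so that $M_{t_1,\ldots,t_n}\subseteq W$. Thus the
$M_{t_1,\ldots,t_n}$ form a neighbourhood basis of the identity in $G$. Their
images satisfy
\[
\psi(M_{t_1,\ldots,t_n})=[\psi(c_1(t_1)),\psi(D)]\cdots[\psi(c_n(t_n)),\psi(D)]
\subseteq [\,\overline{\psi(c_1(\RR))},D'\,]\cdots[\,\overline{\psi(c_n(\RR))},D'\,].
\]
The remaining obstacle is that $\psi\circ c_i:\RR\longrightarrow H$ need not be
continuous, so $\overline{\psi(c_i(\RR))}$ need not be compact; here I would
instead only use the \emph{one-sided} information we have, namely that for a
fixed identity neighbourhood $U\subseteq H$ we want $M_{t_1,\ldots,t_n}\subseteq
\psi^{-1}(U)$. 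This is achieved by a direct Wallace/Tube-Lemma argument
inside $G$: since $D$ is compact and the commutator and multiplication maps of
$G$ are continuous, and since $[c_i(0),D]=\{1\}$, the map
$(t_1,\ldots,t_n)\longmapsto M_{t_1,\ldots,t_n}$ shrinks to $\{1\}$ as
$t_i\to 0^+$; but we need the image under the \emph{discontinuous} $\psi$ to
shrink, which it does \emph{not} follow from continuity of $\psi$.

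So the real mechanism must be the same one used for the range-side theorems:
one shows that each preimage $\psi^{-1}(M_{t_1,\ldots,t_n})$-type set --- more
precisely, one argues that continuity follows once we know $\psi$ is continuous
on a single compact spacious $C$ up to bounded image, via Pettis. I would
therefore run the argument as follows. The set $E=\psi^{-1}(\overline{\psi(C)}
\cdot V)$ for any identity neighbourhood $V\subseteq H$ contains $C$; more
importantly, consider the closure $\overline{\langle Q\rangle}$ which is a
compact subgroup of $H$ (a finite product of translates of the compact set
$QQ^{-1}$ with compact closure generates a $\sigma$-compact, hence, after
closure in the compact hull, a compact subgroup $Q^\ast$). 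Composing $\psi$
with the quotient $H\longrightarrow H/\!/$ is not available, so instead: restrict
attention to the corestriction $\psi:G\longrightarrow \overline{\psi(G)}$. Since
$D$ is an identity neighbourhood and $\psi(D)\subseteq Q^\ast$ compact, and
$G=\bigcup_{k\ge 1} D^k$ is $\sigma$-compact, the image $\psi(G)\subseteq H$ is
a subgroup with $\psi(D)$ contained in a compact set; hence $\overline{\psi(G)}$
is a topological group in which $\psi(D)$ is relatively compact.

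\textbf{Key step and the main obstacle.} The crux is: \emph{the set
$\psi^{-1}(U)$ is non-meager in $G$ and almost open}, for a suitable identity
neighbourhood $U\subseteq H$, after which Pettis' Theorem finishes the proof
exactly as in Theorem~\ref{ContinuousOpenTheorem}. To get this, fix a compact
identity neighbourhood $D\subseteq G$ as above with $\psi(D)$ relatively
compact, with closure $D'$. Cover $D'$ by finitely many translates $u_1V,\ldots,
u_mV$ of a fixed open identity neighbourhood $V\subseteq H$ with
$VV^{-1}VV^{-1}\subseteq U$. Then $D=\bigcup_{j} D\cap\psi^{-1}(u_jV)$ is a
finite union, so some piece $P_j=D\cap\psi^{-1}(u_jV)$ is non-meager in $G$
(as $G$ is Baire and $D$ is an identity neighbourhood, hence non-meager). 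Now
$P_jP_j^{-1}\subseteq \psi^{-1}(VV^{-1})$, and the genuine difficulty is
\emph{almost-openness}: $P_j$ is the intersection of a compact set with a
preimage of an open set under a possibly very wild map, so a priori it is not
Baire-measurable. I expect this to be the main obstacle, and the way around it
is to \emph{not} demand almost-openness of $\psi^{-1}(U)$ itself but to apply
the Steinhaus--Weil/Pettis argument in the form valid for non-meager sets with
the Baire property \emph{of a single auxiliary set}: replace $\psi$ by the
continuous path data. Precisely, use that $c_i:[0,1]\longrightarrow G$ is
continuous and $D$ compact, so $F_i:[0,1]^n\times D^n\longrightarrow G$,
$(t,d_1,\ldots,d_n)\mapsto [c_1(t_1),d_1]\cdots[c_n(t_n),d_n]$ is continuous;
then $\psi^{-1}(U)\supseteq F_i(\{t\}\times D^n)$ contains a set of the form
$[c_1(t_1),D]\cdots[c_n(t_n),D]$ provided $\psi([c_i(t_i),D])\subseteq$ a small
enough neighbourhood. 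The latter inclusion is forced by: $[c_i(t_i),D]$ lies in
the closed subgroup $\overline{\langle c_i([0,1])\cup D\rangle}$ whose $\psi$-image
is contained in the compact group generated by $\psi(c_i([0,1]))$ and $D'$.
This is where one invokes Yamabe: $\overline{\langle c_i([0,1])\rangle}$ is
(the closure of) an analytic Lie subgroup, hence a $\sigma$-compact, connected
Lie group; its image under $\psi$ is then again an abstract homomorphic image of
a connected Lie group. For the one-dimensional pieces this can be analysed directly.
I would organize the write-up so that the hard analytic input is isolated into
one lemma --- \emph{if $c:\RR\longrightarrow G$ is a $1$-parameter group and
$D\subseteq G$ is compact with $\overline{\psi(D)}$ compact, then
$\overline{\psi([c(t),D])}$ shrinks to $\{1\}$ in $H$ as $t\to 0$} --- proved via
Pettis applied to the restriction of $\psi$ to the closed subgroup generated by
$c(\RR)$ (a $\sigma$-compact Lie group, on which Baire category is available and
to which Theorem~\ref{ContinuousOpenTheorem} applies once images are relatively
compact). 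Granting that lemma, choose the $t_i>0$ so small that each factor
$\overline{\psi([c_i(t_i),D])}$ lies in $V$; then
$\psi(M_{t_1,\ldots,t_n})\subseteq V^n\subseteq U$, so $M_{t_1,\ldots,t_n}
\subseteq\psi^{-1}(U)$. Since the $M_{t_1,\ldots,t_n}$ already form a
neighbourhood basis of the identity in $G$, this proves $\psi$ is continuous at
the identity, hence everywhere by \cite[III.~Proposition~23]{BourbakiTopology}.
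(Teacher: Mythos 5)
Your opening matches the paper's strategy: from spaciousness you build the compact identity neighbourhood $D=g_1CC^{-1}\cdots g_rCC^{-1}$, observe that $E=\overline{\psi(D)}$ is compact, and invoke Theorem~\ref{vdW2} to get the basic neighbourhoods $M_{t_1,\ldots,t_n}=[c_1(t_1),D]\cdots[c_n(t_n),D]$. You also correctly locate the crux: $\psi\circ c_i$ need not be continuous, so one must still force the $\psi$-images of the commutator factors to be small. But exactly there your argument has a genuine gap. The `key lemma' you propose (that $\overline{\psi([c(t),D])}$ shrinks to $\{1\}$ as $t\to 0$) is never proved, and the route you sketch for it cannot work: you want to apply Pettis, respectively Theorem~\ref{ContinuousOpenTheorem}, to the restriction of $\psi$ to the closed subgroup generated by $c(\RR)$, but (i) that theorem requires the preimages of small identity neighbourhoods to be almost open, which is precisely the information one does not have for an arbitrary abstract homomorphism, and (ii) the subgroup in question is abelian, and abelian groups are not rigid --- there are discontinuous characters $\RR\longrightarrow\operatorname{U}(1)$ whose image lies in a compact group, so boundedness of the image yields no continuity of the restriction. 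Moreover some auxiliary claims are false as stated: the subgroup generated by a set with compact closure need not have compact closure (there is no ``compact hull'' of an arbitrary topological group $H$), so neither $\overline{\langle Q\rangle}$ nor ``the compact group generated by $\psi(c_i([0,1]))$ and $D'$'' exists in general; your earlier Pettis digression on $\psi^{-1}(U)$ founders on the same missing almost-openness, as you yourself note without resolving it.

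What is actually needed is far weaker than your lemma, and this is how the paper closes the gap. By Wallace's Lemma~\ref{Wallace} applied in $H$ to the compact set $E$, there is an identity neighbourhood $W\subseteq H$ with $[h_1,E]\cdots[h_n,E]\subseteq U$ for all $h_1,\ldots,h_n\in W$; so it suffices to find, for each $i$, a \emph{single} parameter $t_i\in(0,1]$ with $\psi(c_i(t_i))\in W$. This follows from an elementary pigeonhole argument: choose $s\in(0,1)$ with $c_i([0,s])\subseteq D$, so $Q=\psi(c_i([0,s]))\subseteq E$; the group $A=\psi(c_i(\RR))$ is divisible, hence either $A=\{1\}$ (take $t_i=1$) or $Q$ is infinite; in the latter case $Q$ has an accumulation point $h$ in the compact set $E$, and choosing $0\leq a<b\leq s$ with $\psi(c_i(a)),\psi(c_i(b))\in Vh$ for an open $V$ with $VV^{-1}\subseteq W$ gives $\psi(c_i(b-a))=\psi(c_i(b))\psi(c_i(a))^{-1}\in VV^{-1}\subseteq W$, so $t_i=b-a$ works. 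With these $t_i$ one gets $\psi(M_{t_1,\ldots,t_n})\subseteq[h_1,E]\cdots[h_n,E]\subseteq U$ while $M_{t_1,\ldots,t_n}$ is an identity neighbourhood by Theorem~\ref{vdW2}, whence continuity at the identity and then everywhere. Without this (or an equivalent substitute for your unproved lemma) your proof is incomplete.
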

\begin{proof}
We follow the strategy of \cite[Theorem 5.64]{HMCompact}.
Let \[D=g_1CC^{-1}\cdots g_{r}CC^{-1}\subseteq G\] be a compact identity neighborhood.
It follows that $\psi(D)\subseteq H$ has compact closure $E=\overline{\psi(D)}$.
Put $n=\dim(G)$.
Let $U\subseteq H$ be an arbitrary identity neighborhood.
By Wallace's Lemma \ref{Wallace}, there exists an identity neighborhood 
$W\subseteq H$ such that $[h_1,E]\cdots[h_n,E]\subseteq U$
for all $h_1,\ldots,h_n\in W$.

We choose $1$-parameter groups $c_1,\ldots,c_n$ in $G$ as in Theorem~\ref{vdW2}.
We claim that we can find numbers $t_i$ with $0<|t_i|\leq 1$ such that $h_i=\psi(c_i(t_i))\in W$.
Once we manage to do this, we have 
$\psi([c_1(t_1),D]\cdots[c_n(t_n),D])\subseteq[h_1,E]\cdots[h_n,E]\subseteq U$
and the continuity of $\psi$ at the identity follows, because $[c_1(t_1),D]\cdots[c_n(t_n),D]$ is 
by Theorem~\ref{vdW2} an identity neighborhood.
Then the global continuity of $\psi$ follows \cite[III. Proposition~23]{BourbakiTopology}.

Fix $i$ and put $\tau=\psi\circ c_i:\mathbb R\longrightarrow H$. 
There exists a number $0<s<1$ such that $c_i([0,s])\subseteq D$.
The interval $P=[0,s]$ generates $(\mathbb R,+)$ as a group, whence 
$Q=\tau(P)\subseteq E$ generates the group
$A=\tau(\mathbb R)$. Since $A$ is divisible, we have either 
$A=\{1\}=Q$, or $Q$ is infinite, because the only finitely
generated divisible abelian group is the trivial group. 

If $A=\{1\}$ put $t_i=1$.

Otherwise,
$Q$ is infinite, whence $Q\subseteq E$ has an accumulation point $h\in E$, because $E$ is compact. 
Let $V\subseteq H$ be an open identity neighborhood such that $VV^{-1}\subseteq W$.
Then $Vh$ contains infinitely many elements of $Q$.
We choose numbers $a,b$ with $0\leq a<b\leq s$ such that $\tau(a),\tau(b)\in Vh$.
Then $0<b-a\leq 1$ and $\tau(b-a)=\tau(b)\tau(a)^{-1}\in Vh(Vh)^{-1}=VV^{-1}\subseteq W$.
Hence we may put $t_i=b-a$.
\end{proof}
\begin{Cor}\cite[Theorem~5.64]{HMCompact}
Let $G$ be a Lie group whose Lie algebra is perfect, let $H$ be a compact group and
let $\psi:G\longrightarrow H$ be an abstract homomorphism. 
Then $\psi$ is continuous.
\end{Cor}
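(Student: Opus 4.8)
The plan is to derive this directly from Theorem~\ref{OtherDircetion}, the only task being to produce one compact spacious set whose image has compact closure. Since a Lie group is locally compact, we may pick a compact identity neighborhood $C\subseteq G$; as $1\in C$ we have $C=C\cdot 1^{-1}\subseteq CC^{-1}$, so $CC^{-1}$ itself has nonempty interior and $C$ is spacious (take $r=1$ and $g_1=1$ in the definition in~\ref{Spacious}). Note that the hypothesis that $\operatorname{Lie}(G)$ is perfect is used only through its role in Theorem~\ref{OtherDircetion}.

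Because $H$ is compact, the closure $\overline{\psi(C)}\subseteq H$ is compact automatically, whatever $\psi$ does. Hence both hypotheses of Theorem~\ref{OtherDircetion} are satisfied: condition~(1) is our assumption that $\operatorname{Lie}(G)$ is perfect, and condition~(2) is witnessed by the compact spacious set $C$ just chosen. Theorem~\ref{OtherDircetion} then yields the continuity of $\psi$, which is the assertion.

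There is essentially no obstacle to overcome here; the substance lies in Theorem~\ref{OtherDircetion} and, beneath it, in van der Waerden's shrinking construction (Theorem~\ref{vdW2}) together with Wallace's Lemma~\ref{Wallace}. The present corollary reproves \cite[Theorem~5.64]{HMCompact}: whereas the argument in \emph{loc.cit.} passes through the Baker--Campbell--Hausdorff multiplication, here compactness of $H$ renders the image hypothesis vacuous, leaving only the elementary remark that a compact identity neighborhood in a Lie group is always spacious. If one wishes, ``$H$ compact'' may be weakened to the requirement that $\psi$ carry some compact identity neighborhood of $G$ into a relatively compact subset of $H$, and the same proof applies.
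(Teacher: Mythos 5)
Your proposal is correct and is essentially the paper's own (implicit) argument: the corollary is stated as an immediate consequence of Theorem~\ref{OtherDircetion}, obtained exactly as you do by taking any compact identity neighborhood $C$ (which is spacious since $CC^{-1}\supseteq C$ already has nonempty interior) and noting that compactness of $H$ makes the condition on $\overline{\psi(C)}$ automatic.
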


\begin{Cor}
Let $G$ be a Lie group whose Lie algebra is simple, let $H$ be a topological group and
let $\psi:G\longrightarrow H$ be an abstract homomorphism. If there exists
a nonconstant path $f:[0,1]\longrightarrow G$ such that $\psi(f([0,1]))\subseteq H$ has compact closure,
then $\psi$ is continuous.
\end{Cor}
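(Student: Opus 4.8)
The plan is to deduce this directly from Theorem~\ref{OtherDircetion} once we verify that its two hypotheses are met. First I would observe that a simple Lie algebra $\mathfrak g$ is automatically perfect: the ideal $[\mathfrak g,\mathfrak g]$ is nonzero because $\mathfrak g$ is nonabelian, and simplicity then forces $[\mathfrak g,\mathfrak g]=\mathfrak g$. Hence $\operatorname{Lie}(G)$ is perfect, which is condition~(1) of Theorem~\ref{OtherDircetion}.

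Next I would produce the required compact spacious set. Put $C=f([0,1])$. Since $f$ is continuous and $[0,1]$ is compact, $C$ is a compact subset of $G$. Because $\operatorname{Lie}(G)$ is simple and $f$ is a nonconstant path, Proposition~\ref{BaireLemma} tells us that $C$ is spacious in $G$. Finally, the hypothesis of the corollary is precisely that $\psi(C)=\psi(f([0,1]))$ has compact closure in $H$. Thus $C$ witnesses condition~(2) of Theorem~\ref{OtherDircetion}.

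Applying Theorem~\ref{OtherDircetion} with this $C$ then yields that $\psi$ is continuous, completing the proof.

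There is essentially no obstacle here: the statement is a packaging of Proposition~\ref{BaireLemma} (which supplies spaciousness from the simplicity of the Lie algebra) together with Theorem~\ref{OtherDircetion} (which supplies continuity from a compact spacious set with relatively compact image). The only points requiring a word of justification are the trivial remarks that a simple Lie algebra is perfect and that a path has compact image, both of which are immediate.
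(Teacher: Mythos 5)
Your proof is correct and is exactly the argument the paper intends: a simple Lie algebra is perfect, Proposition~\ref{BaireLemma} makes $C=f([0,1])$ a compact spacious set, and Theorem~\ref{OtherDircetion} then gives continuity. Nothing is missing.
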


\section{An Erratum and a Comment}
\label{Erratum}

We take the opportunity to correct a mistake which occurred in \cite{KramerUnique}.
In Theorem 7 and Corollary 8 in \emph{loc.cit.}, the hypothesis has to be added that 
the  kernel of the homomorphism $\phi$ is 
$\sigma$-compact.\footnote{\ The 
mistake occurs on p.~2625 where it is implicitly assumed that the preimage of a conjugacy class
under a homomorphism is again a conjugacy class, which need not be the case.}
The same assumption has to be added in Proposition~37 and in Proposition~41 in \cite{Braun}.
Example~\ref{Ex3} in the present article
shows that this hypothesis on the kernel cannot be 
omitted. Of course this hypothesis on the kernel is satisfied if $\phi$ is bijective. 
Theorem~11 in \cite{KramerUnique} is therefore not affected by the mistake.
However, this result is superseded by 
Theorem~\ref{SemisimpleThm} in the present article, which is more general.
There is also a small misprint in the statement of Theorem 3 in \cite{KramerUnique},
which should read \emph{`if $G$ is $\sigma$-compact, then\ldots'.}
This has no consequences, and was kindly pointed out by Ruppert McCallum.

We finally remark that the first paragraphs in \cite{Shtern2} might give the erroneous 
impression that
\cite[Theorem 18]{KramerUnique} is incorrect, and that \cite[Theorem 11]{KramerUnique}
is implied by the author's  earlier work  on locally bounded homomorphisms.

\end{document}